\documentclass{amsart}

\title{Non-escape of mass for arithmetic quantum limits on hyperbolic $4$-manifolds}

\author{Alexandre de Faveri}
\address{Department of Mathematics, Stanford University, Stanford, CA, USA}
\email{\url{afaveri@stanford.edu}}

\author{Zvi Shem-Tov}
\address{Einstein Institute of Mathematics, The Hebrew University of Jerusalem,  Israel}
\email{\url{zvi.shem-tov@mail.huji.ac.il}}


\usepackage{amssymb, amsmath, amsthm}
\usepackage{hyperref, enumitem}
\usepackage{thmtools}
\usepackage[capitalize, nameinlink]{cleveref}
\usepackage[space]{cite}
\usepackage[centering]{geometry}
\usepackage{xcolor, graphicx}
\usepackage{bbm}
\usepackage{mathrsfs}
\usepackage{pgf}
\usepackage{caption}
\usepackage{subcaption}
\usepackage{comment}
\usepackage[utf8]{inputenc}\DeclareUnicodeCharacter{2212}{-}


\usepackage{mathrsfs}
\usepackage{amssymb}
\usepackage{dsfont}
\usepackage{verbatim}
\usepackage{url}
\usepackage{mathtools}
\usepackage{etoolbox}
\usepackage{leftidx}


\def\Z{\mathbb {Z}}
\def\Q{\mathbb {Q}}
\def\R{\mathbb {R}}
\def\C{\mathbb {C}}


\def\isom{\simeq}

\def\bs{\backslash}


\DeclareMathOperator{\Isom}{Isom}

\DeclareMathOperator{\vol}{vol}



\newcommand\abs[1]{\left| {#1} \right|}
\newcommand\norm[1]{\left\Vert {#1} \right\Vert}

\newcommand{\xdashrightarrow}[2][]{\ext@arrow 0359\rightarrowfill@@{#1}{#2}}


\DeclareRobustCommand
  \rddots{\mathinner{\mkern1mu\raise\p@
    \vbox{\kern7\p@\hbox{.}}\mkern2mu
    \raise4\p@\hbox{.}\mkern2mu\raise7\p@\hbox{.}\mkern1mu}}


\newcommand\SL{\mathrm{SL}}
\newcommand\SV{\mathrm{SV}}
\newcommand\PSV{\mathrm{PSV}}

\newcommand\Spin{\mathrm{Spin}}

\newcommand\SO{\mathrm{SO}}




\newcommand\CC{\mathbb{C}}

\newcommand\HH{\mathbb{H}}

\newcommand\RR{\mathbb{R}}

\newcommand\ZZ{\mathbb{Z}}

\newcommand\bH{\mathbf{H}}

\DeclareMathAlphabet{\mathcal}{OMS}{cmsy}{m}{n}

\newcommand\cF{\mathcal{F}}

\newcommand\cH{\mathcal{H}}

\newcommand\cN{\mathcal{N}}

\newcommand\cQ{\mathcal{Q}}

\newcommand\cS{\mathcal{S}}


\newcommand\diff{\mathop{}\!\mathrm{d}}

\newcommand\dcross{\diff^{\kern-2pt\raisebox{1pt}{$\times$}}\kern-8pt}











\newcommand\blfootnote{\xdef\@thefnmark{}\@footnotetext}

\DeclareFontFamily{U}{wncy}{}
\DeclareFontShape{U}{wncy}{m}{n}{<->wncyr10}{}
\DeclareSymbolFont{mcy}{U}{wncy}{m}{n}
\DeclareMathSymbol{\Sha}{\mathord}{mcy}{"58}

\renewcommand{\H}{\mathbb{H}}
\renewcommand{\P}{\mathcal{P}}

\newcommand{\eps}{\varepsilon}
\newcommand{\1}{\mathbbm{1}}

\newcommand{\M}{\mathcal{M}}

\DeclareFontFamily{U}{mathx}{\hyphenchar\font45}
\DeclareFontShape{U}{mathx}{m}{n}{
      <5> <6> <7> <8> <9> <10>
      <10.95> <12> <14.4> <17.28> <20.74> <24.88>
      mathx10
      }{}
\DeclareSymbolFont{mathx}{U}{mathx}{m}{n}
\DeclareFontSubstitution{U}{mathx}{m}{n}
\DeclareMathAccent{\widecheck}{0}{mathx}{"71}
\DeclareMathAccent{\wideparen}{0}{mathx}{"75}


\declaretheorem[name=Theorem]{theorem}
\declaretheorem[name=Lemma]{lemma}
\declaretheorem[name=Proposition]{proposition}
\declaretheorem[name=Corollary]{corollary}

\declaretheorem[name=Remark]{remark}

\declaretheorem[name=Convention]{convention}


\newlist{theoremlist}{enumerate}{1}
\setlist[theoremlist]{label=(\alph{theoremlisti}),
                  ref=\thetheorem \ (\alph{theoremlisti}),
                  noitemsep}
                  
\newlist{lemmalist}{enumerate}{1}
\setlist[lemmalist]{label=(\alph{lemmalisti}),
                  ref=\thelemma \ (\alph{lemmalisti}),
                  noitemsep}
                  
\newlist{propositionlist}{enumerate}{1}
\setlist[propositionlist]{label=(\alph{propositionlisti}),
                  ref=\theproposition \ (\alph{propositionlisti}),
                  noitemsep}

\newlist{corollarylist}{enumerate}{1}
\setlist[corollarylist]{label=(\alph{corollarylisti}),
                  ref=\thecorollary \ (\alph{corollarylisti}),
                  noitemsep}
                                                      
\Crefname{listthm}{Theorem}{Theorems}
\Crefname{listlem}{Lemma}{Lemmas}
\Crefname{listprop}{Proposition}{Propositions}
\Crefname{listcor}{Corollary}{Corollaries}

\addtotheorempostheadhook[theorem]{\crefalias{theoremlisti}{listthm}}
\addtotheorempostheadhook[lemma]{\crefalias{lemmalisti}{listlem}}
\addtotheorempostheadhook[proposition]{\crefalias{propositionlisti}{listprop}}
\addtotheorempostheadhook[corollary]{\crefalias{corollarylisti}{listcor}}


\Crefname{section}{Section}{Sections}
\Crefname{subsection}{Section}{Sections}


\begin{document}

\begin{abstract}
    We make progress on the quantum unique ergodicity (QUE) conjecture for Hecke--Maass forms on a congruence quotient of hyperbolic $4$-space, eliminating the possibility of ``escape of mass'' for these forms.   
\end{abstract}

\maketitle


\section{Introduction}

\subsection{Background}

Let $M$ be a compact Riemannian manifold of negative sectional curvature and consider a sequence $\phi_j\in L^2(M)$ of eigenfunctions of the Laplace--Beltrami operator $\Delta$ with unit norm and eigenvalues going to infinity. Let $\mu_j$ denote the probability measure on $M$ defined by $\mu_j=\abs{\phi_j}^2d\vol_M$, where $d\vol_M$ is the uniform Riemannian probability measure on $M$. The quantum unique ergodicity (QUE) conjecture of Rudnick and Sarnak \cite{RS94} asserts that every weak-* limit of the $\mu_j$ is equal to $d\vol_M$. See \cite{Sar11} for a survey of the subject.

In its original form, the conjecture is open for any manifold $M$ as above. However, significant progress has been made for arithmetic manifolds, that is for $M=\Gamma\bs S$ where $S$ is a symmetric space and $\Gamma$ is an arithmetic lattice of isometries of $S$. In this case it is natural to consider joint eigenfunctions of both $\Delta$ and the Hecke operators, which are discrete averaging operators coming from the arithmetic structure of $M$.    

In the seminal work of Lindenstrauss \cite{Lin06}, QUE was proved for such eigenfunctions on certain (arithmetic) compact quotients of the hyperbolic plane $\H^2$. For $\SL_2(\Z)\bs \H^2$, which is not compact, Lindenstrauss proved that any weak-* limit as above must be \emph{proportional} to the uniform probability measure. To complete the proof of QUE in that case it was necessary to rule out the possibility of ``escape of mass'', i.e.\ to show that the constant of proportionality is equal to $1$. This was achieved by Soundararajan \cite{Sou10}. 
His work was later generalized by Zaman \cite{Zam12} to quotients of $\H^3$, a case in which arithmetic QUE was recently established by Silberman and the second named author \cite{S-TS22}. In a subsequent paper \cite{S-TS24} they obtained certain homogeneity results for weak-* limits on quotients of $\H^4$. In this paper we prove non-escape of mass for such limits.  

\subsection{Statement of results}\label{subsection:intro_results}

As a model for the hyperbolic $4$-space we take the upper half-space
\begin{equation*}
    \mathbb{H}^4=\{z=(x_0, x_1, x_2, y)\in\R^4\mid y>0\}, 
\end{equation*}
with the metric $ds^2 = y^{-2}(dx_0^2+dx_1^2+dx_2^2+dy^2)$. Let $G$ denote the group of orientation-preserving isometries of $\H^4$. Then $G$ is generated by translations $t_\beta$ and the inversion $s$ given by
\begin{align*}
    t_\beta:z\mapsto z+\beta \qquad \qquad \text{ and } \qquad \qquad s:z\mapsto -\frac{\overline{z}}{\norm{z}^2},
\end{align*}
where $\beta\in V^3:=\{z\in \R^4\mid y=0\}$, $\overline{z} := (x_0,-x_1,-x_2,-y)$, and $\norm{\cdot}^2$ is the usual $\ell^2$-norm in $\RR^4$. 

Let $\Gamma$ denote the subgroup of $G$ generated by $s$ and $t_\beta$, where $\beta$ ranges over integral vectors (that is $x_0,x_1,x_2\in\ZZ$).
Then $\Gamma$ is a non-uniform lattice in $G$ and we can endow the quotient space 
$$
X:=\Gamma\bs \H^4
$$
with the probability measure $dx$ coming from the Riemannian measure on $\HH^4$. 
In fact, $G$ is naturally identified as the central quotient of $\SV_2(\R) \isom {\rm{Spin}}(1,4)$ (see \cref{subsection:SV_n} for a discussion of $\SV_n$). Under this identification, $\Gamma$ corresponds to $\SV_2(\Z)$. Let $\mathcal{H}$ denote the corresponding algebra of Hecke operators on $X$ at primes $p\ne2$ (see \cref{subsection:Hecke_operators} for the definition). We are ready to state our main result.

\begin{theorem}\label{thm:non-escape_of_mass}
    Let $\phi_j\in L^2(X)$ be a sequence of joint eigenfunctions of $\mathcal{H}$ and $\Delta$ with unit norm and Laplace eigenvalues $\lambda_j \to \infty$.
    Suppose the probability measures $\mu_j=|\phi_j(x)|^2dx$ converge to $\mu$ in the weak-* topology. Then $\mu$ is a probability measure. 
\end{theorem}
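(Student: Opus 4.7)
The plan is to follow the Soundararajan--Zaman template \cite{Sou10, Zam12} adapted to the rank-one symmetric space $\H^4$ and to the Hecke algebra of $\PSV_2$ developed in \cite{S-TS22, S-TS24}. The quotient $X = \Gamma \bs \H^4$ has a single cusp (at $y=\infty$), so the sequence $\mu_j$ is equicontinuous on every compact piece of $X$ by elliptic regularity, and mass can be lost only in this cusp. Accordingly, it suffices to prove that for every $\varepsilon > 0$ there exists $T_0 = T_0(\varepsilon)$, independent of $j$, such that
\begin{equation*}
    \int_{F \times [T_0, \infty)} |\phi_j(x_0,x_1,x_2,y)|^2 \, \frac{dx_0\, dx_1\, dx_2\, dy}{y^4} < \varepsilon \qquad \text{for all } j,
\end{equation*}
where $F \subset V^3$ is a fundamental domain for the translation lattice $\Lambda \leq \Gamma$ (which is $\Z^3$ in the translation coordinates).

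Since $\phi_j$ is an $L^2$-eigenfunction with $\lambda_j \to \infty$, it is orthogonal to the constants and hence cuspidal (in this rank-one setting the discrete spectrum reduces to constants plus cuspidal). I would expand at the cusp, writing $\lambda_j = \tfrac{9}{4} + r_j^2$, as
\begin{equation*}
    \phi_j(x,y) = \sum_{\xi \in \Lambda^{*} \setminus \{0\}} a_j(\xi)\, y^{3/2}\, K_{i r_j}(2\pi|\xi|y)\, e(\langle \xi, x\rangle),
\end{equation*}
so that Parseval along the torus cross-section turns the cusp integral into $\vol(F) \sum_\xi |a_j(\xi)|^2 \int_{T_0}^\infty |K_{i r_j}(2\pi|\xi|y)|^2\, dy/y$. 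The exponential decay of $K_{ir}(u)$ for $u \gtrsim 1 + |r|$ concentrates the contribution on frequencies $|\xi| \lesssim (1+|r_j|)/y$, so the problem reduces to a Dirichlet-series-type estimate on the coefficients $a_j(\xi)$ at low frequencies.

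The arithmetic input comes from the Hecke action at an odd prime $p$. Geometrically, the Hecke correspondence $T_p$ sends a deep-cusp point $z$ to a collection of points of which at most one lies at height comparable to $y(z)$, all others at heights $\lesssim y(z)/p$; combining with the eigenvalue equation $T_p \phi_j = \lambda_j(p) \phi_j$ and Cauchy--Schwarz yields a pointwise bound of the form
\begin{equation*}
    |\lambda_j(p)|^2\, |\phi_j(z)|^2 \;\lesssim\; \sum_{w \in T_p\cdot z} |\phi_j(w)|^2,
\end{equation*}
which after integration controls the cusp mass above height $T$ in terms of the cusp mass above the smaller height $\sim T/p$. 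Algebraically, the Hecke algebra of $\PSV_2$ at $p$ provides quadratic identities on the Hecke eigenvalues (a spinorial analogue of the $\SL_2$ identity $T_p^2 = T_{p^2} + p$) which force, on average over primes in any dyadic window $P \leq p \leq 2P$, the mean-square lower bound $\sum_p |\lambda_j(p)|^2 \gg \pi(2P) - \pi(P)$ uniformly in $j$.

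Averaging the pointwise comparison over primes $p \in [P,2P]$ with $P$ chosen slightly smaller than $T$ and invoking the mean-square lower bound then yields a logarithmic-in-$T$ gain, giving a bound $\int_{y \geq T} |\phi_j|^2\, dV/y^4 \ll 1/\log T$ uniformly in $j$ and completing the proof. The principal obstacle will be to extract from the spinorial Hecke algebra --- which, unlike the $\SL_2$ case, has several generators at each prime, reflecting the richer representation theory of $\Spin(1,4)$ --- the sharp quadratic identity required for the mean-square lower bound, and to carry it through the analysis of the Fourier expansion on $\H^4$ in such a way that the error terms (notably the low-frequency terms, the tails of the $K$-Bessel integrals, and the contribution of the missing Hecke operator at $p = 2$) are absorbed without destroying the logarithmic gain.
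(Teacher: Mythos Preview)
Your overall reduction---single cusp, Fourier expansion, Parseval, $K$-Bessel localization---matches the paper, and the paper does use the Hecke relation to force some eigenvalue to be large. But the arithmetic core of your proposal has two genuine gaps.

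First, the geometric claim about the Hecke correspondence is false for $\SV_2$. You assert that $T_p \cdot z$ contains at most one point at height comparable to $y(z)$, the rest at height $\lesssim y(z)/p$. In fact, for $T_1(p)$ on $\H^4$ the coset decomposition produces, besides the one ``up'' point and the ``down'' points, roughly $p+1$ points at the \emph{same} height as $z$ (these are the quaternion-conjugate terms $\alpha_i' \beta \overline{\alpha_i}/p$ visible in \cref{lemma:T1_relation}). Your pointwise inequality $|\lambda_j(p)|^2 |\phi_j(z)|^2 \lesssim \sum_{w \in T_p z} |\phi_j(w)|^2$ therefore does not give descent in height: the same-height terms dominate and you cannot iterate. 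This is precisely the obstacle the paper isolates in the introduction and overcomes by a weighted Cauchy--Schwarz (\cref{lemma:R_to_S}) together with a quaternion multiplicity bound (\cref{lemma:quaternion_multiplicity}).

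Second, the ``quadratic identity forcing $\sum_p |\lambda_j(p)|^2 \gg \pi(2P)-\pi(P)$'' does not exist here. The Hecke algebra of $\SV_2$ at $p$ has rank $2$, and the only available relation is \eqref{eq:Hecke_relation}, namely $\lambda_1(p)^2 - (1+p^{-1})\lambda_2(p) - \lambda_3(p) \approx 1$. This forces only $\max_\ell |\lambda_\ell(p)| \gg 1$; for a given $j$, different primes may require different $\ell$. The paper handles this by pigeonholing $\mathcal P$ into dyadic ranges for each $|\lambda_\ell(p)|^2$ and then splitting into three cases, each requiring a different manipulation of the Hecke relations (in particular the ``trade conjugation for powers of $p$'' identities \eqref{eq:combinatorial_relation_1}--\eqref{eq:combinatorial_relation_2} when one must amplify with $\lambda_2$). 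Your single-operator mean-square lower bound is not available, and without it the averaging step collapses.

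The paper does not argue pointwise at all; it works entirely with the partial sums $S(x)=\sum_{N(\beta)\le x}|A(\beta)|^2$ and proves the comparison inequality $S(x/y)\ll y^{-1/8}(\log y)^R S(x)$ (\cref{thm:main}), from which the cusp bound $\int_{\mathcal S_T}|\phi|^2 \ll T^{-1/4}(\log T)^R$ follows by the Parseval computation you describe. Your sketch would need to be rebuilt around this Fourier-coefficient framework and the three-case analysis to go through.
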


Combining \cref{thm:non-escape_of_mass} with \cite{S-TS24} gives the following.

\begin{theorem}\label{thm:homogeneity}
   Let $\mu$ be a limit measure as in \cref{thm:non-escape_of_mass}. Then $\mu$ is a countable convex combination of measures, each of which is either the Riemannian probability measure on $X$ or the Riemannian probability measure on a totally geodesic hyperbolic submanifold of codimension $1$.
\end{theorem}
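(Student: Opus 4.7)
The plan is to combine \cref{thm:non-escape_of_mass} with the homogeneity result for weak-* limits on quotients of $\H^4$ established in \cite{S-TS24}. The result of \cite{S-TS24} classifies any weak-* limit $\mu$ of the sequence $\mu_j$ appearing in \cref{thm:non-escape_of_mass}: it asserts that the portion of $\mu$ supported inside $X$ is a countable convex combination of the Riemannian probability measure on $X$ and of Riemannian probability measures on totally geodesic hyperbolic codimension-one submanifolds of $X$, with a possible remaining defect equal to $1 - \mu(X)$ corresponding to mass escaping through the cusp. The appearance of codimension-one components — not present in the lower-dimensional cases treated in \cite{Lin06} and \cite{S-TS22} — reflects the fact that the symmetry group $G \isom \PSV_2(\R)/\{\pm 1\}$ admits intermediate semisimple subgroups stabilizing totally geodesic copies of $\H^3 \subset \H^4$, on which the Hecke action preserves proper ergodic invariant measures.

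First I would invoke this classification to write $\mu$ as the asserted convex combination (of Riemannian probability measure on $X$ and on codimension-one totally geodesic submanifolds) with weights summing to $\mu(X)$, together with an unknown residual mass $1 - \mu(X)$ representing escape to the cusp. Second, I would apply \cref{thm:non-escape_of_mass} to conclude that $\mu(X) = 1$, so no mass escapes. This forces the coefficients in the convex combination to sum to one and the residual component to vanish, which is precisely the statement of \cref{thm:homogeneity}.

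The substantive work in this chain lies entirely in \cref{thm:non-escape_of_mass} (and, upstream, in \cite{S-TS24}); the deduction of \cref{thm:homogeneity} from those two inputs is an immediate matter of assembling them. The only care required is to confirm a compatibility of setups between the two ingredients — namely, that the weak-* limits covered by the classification of \cite{S-TS24} include those arising from the joint $(\mathcal{H},\Delta)$-eigenfunctions in \cref{thm:non-escape_of_mass}, with the Hecke algebra $\mathcal{H}$ defined via $\SV_2(\Z)$ in both places — after which the combination is automatic.
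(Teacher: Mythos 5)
Your proposal is correct and matches the paper's own (implicit) argument: the paper proves \cref{thm:homogeneity} precisely by combining the classification of weak-* limits from \cite{S-TS24}, which allows a residual escaping component, with \cref{thm:non-escape_of_mass} to force the weights of the homogeneous components to sum to one. The only quibble is the side remark identifying $G$ with $\PSV_2(\R)/\{\pm 1\}$ (it is $\SV_2(\R)/\{\pm 1\}$, i.e.\ $\PSV_2(\R)$), but this plays no role in the deduction.
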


We remark that for $\SL_2(\Z)\bs \HH^2$ and $\SL_2(\Z[i])\bs \HH^3$, arithmetic QUE (with a quantitative rate of convergence) follows from GRH, since the Watson--Ichino triple product formula \cite{Wat02, Ich08} reduces it to subconvexity for certain $L$-functions (see \cite{Mar14, BHMWW24} for details). As far as the authors are aware, no such formula is available for congruence quotients of $\HH^n$ when $n \geq 4$, since they are no longer naturally identified with an adelic quotient of $\SL_2$ over a number field. To illustrate this difference, we remark that there are violations of the Ramanujan conjecture on the space $X=\SV_2(\Z) \backslash \H^4$ considered in this paper \cite{Pit05}. We refer to \cite{CLPS91, EGM90} and references therein for more on the relevant spectral theory.

\subsection{Sketch of the argument}

The proof of \cref{thm:non-escape_of_mass} follows the basic strategy of \cite{Sou10}, which is based on arithmetic relations between the Fourier coefficients. However, in our context these relations are significantly more complicated. For instance, unlike in the cases of $\H^2$ or $\H^3$, they involve an unbounded number of terms. After recalling the argument of Soundararajan (which we present with slight differences from the original), we will describe in more detail the main difficulties which arise in our setup and how we overcome them. The discussion in this subsection will be imprecise for conceptual clarity. 

\subsubsection{Soundararajan's proof for \texorpdfstring{$\SL_2(\Z) \backslash \H^2$}{}}

In \cite{Sou10} non-escape of mass is deduced from a uniform inequality for the Fourier coefficients $a(n)$ of a Hecke--Maass form, of the shape
\begin{equation*}
    s(x/y) \ll \frac{s(x)}{\sqrt{y}}  \qquad\qquad \text{for } s(x) := \sum_{n\leq x} \abs{a(n)}^2.
\end{equation*}

Let $\lambda(m)$ denote the eigenvalue of the Hecke operator $T_m$, so that $\lambda(m)a(n) = \sum_{d\mid (m, n)} a(mn/d^2)$. Consider primes $p \asymp \sqrt{y}$. By the relation
\begin{equation}\label{eq:tp-tp2}
\lambda(p)^2 = \lambda(p^2)+1,
\end{equation} 
at least one of $\lambda(p)$ and $\lambda(p^2)$ is bounded away from zero. For simplicity assume $|\lambda(p)|^2 \asymp L \gg 1$ for all $p \asymp \sqrt{y}$. Soundararajan splits $s$ into two sums $s^{<K}$ and $s^{\geq K}$, depending on whether $n$ has $<K$ or $\geq K$ prime factors $p \asymp \sqrt{y}$, for some parameter $K$. Then
\begin{equation}\label{eq:Sound_amplification}
    L \sqrt{y} \cdot s^{<K}\left(z\right) \approx \sum_{\substack{n \leq z \\ \#\{p \asymp \sqrt{y}\ :\ p \mid n\} < K}} \abs{a(n)}^2 \Bigg( \sum_{\substack{p \asymp \sqrt{y} \\ p \nmid n}} \abs{\lambda(p)}^2 \Bigg) \leq K \cdot s^{<K+1}\left(z \sqrt{y}\right)
\end{equation}
since there are $\approx \sqrt{y}$ primes $p \asymp \sqrt{y}$ (we are ignoring log factors) and $K$ will be small
. Applying this for $z = x/y$ and again for $z = x/\sqrt{y}$ gives $s^{<K}\left(x/y\right) \ll s(x)/\sqrt{y}$ as long as $K \ll L y^{1/4}$. 

Each $n$ present in $s^{\geq K}(x/y)$ is a multiple of some product $d = p_1 \cdots p_K$ of $K$ primes $p_i \asymp \sqrt{y}$. Note that $d \approx (\sqrt{y})^{K}$, and there are $\approx \binom{\sqrt{y}}{K}$ such $d$. Ignoring lower order terms in the Hecke relation, each one contributes 
        \begin{equation}\label{eq:Sound_multiple_of_p}
            \sum_{m \leq \frac{x}{yd}} |a(dm)|^2 \approx |\lambda(d)|^2 \cdot s\left(\frac{x}{yd}\right) \approx L^{K} \cdot s\left(\frac{x}{yd}\right) \approx L^K \cdot s\left(\frac{x}{y (\sqrt{y})^K}\right).
        \end{equation} 
        Fixing $x$ and using induction for the shorter sum $s(x/y(\sqrt{y})^K)$, where the base case is trivial since $s(x/y) = 0$ for $y>x$, leads to
        \begin{equation*}
            s^{\geq K}\left(\frac{x}{y}\right) \ll \binom{\sqrt{y}}{K} L^{K} \cdot s\left(\frac{x}{y(\sqrt{y})^{K}}\right) \ll \left(\frac{10\sqrt{y} L}{K y^{1/4}}\right)^{K} \frac{s(x)}{\sqrt{y}}.
        \end{equation*} 
        This succeeds if $K \geq 20 L y^{1/4}$, which is consistent with the restriction $K \ll L y^{1/4}$.

    \subsubsection{Our proof for \texorpdfstring{$\SV_2(\Z) \backslash \H^4$}{}} 

    The Fourier coefficients $A(\beta)$ are now indexed by $\beta\in \Z^3$, and we follow the strategy of deducing non-escape of mass from an inequality of the form
    \begin{equation*}
        S(x/y) \ll \frac{S(x)}{y^{1/8}} , \qquad\qquad \text{for } S(x) := \sum_{|\beta|^2 \leq x} \abs{A(\beta)}^2.
    \end{equation*}
    
    Instead of \eqref{eq:tp-tp2} we have a relation involving three Hecke operators, as $\SV_2$ has rank $2$ over $\Q_{p\ge3}$. There are two algebraically independent Hecke operators, with eigenvalues $\lambda_1(p)$ and $\lambda_2(p)$, and we consider a third (natural) one with eigenvalue $\lambda_3(p)$ and obtain a relation of the form 
    $$
    \lambda_1(p)^2 - \lambda_2(p) - \lambda_3(p) \approx 1,
    $$ 
    which forces $\max_{\ell}|\lambda_\ell(p)| \gg 1$. As in \cite{Sou10}, we split $S$ into two sums $S^\sharp$ and $S^\flat$ corresponding respectively to $\beta$ divisible by few or many primes $p \asymp y^{1/8}$ (we must also control prime powers but ignore this point for the sketch). 
    
    We will now highlight two obstructions which arise in the proof. First, assume for simplicity that $|\lambda_1(p)|^2 \asymp 1$ for all $p \asymp y^{1/8}$ (the case when the $|\lambda_1(p)|^2$ localize around a larger value turns out to be easier, and the case when they are small will be addressed below). Identify $\beta \in \Z^3$ with $\beta = b_1 i + b_2j + b_3k$. The first Hecke operator acts by
    \begin{align}\label{eq:fake_Hecke_1}
        \lambda_1(p) A(\beta) \approx A(p\beta) + A\left(\frac{\beta}{p}\right) + \frac{1}{\sqrt{p}} \sum_{|\alpha|^2 = p} A\left( \frac{\alpha \beta \overline{\alpha}}{p} \right),
    \end{align}
    where $\alpha$ runs over integral quaternions. To deal with $S^\flat$ as in \eqref{eq:Sound_multiple_of_p}, it is crucial to obtain for instance
    \begin{equation}\label{eq:fake_p_multiple_sum}
        \sum_{|\beta|^2 \leq z} |A(p\beta)|^2 \ll |\lambda_1(p)|^2 \cdot S(z) \asymp S(z).
    \end{equation}
    Note that isolating $A(p\beta)$ in \eqref{eq:fake_Hecke_1}, we now have to contend with the sum over $\alpha$, of length $\asymp p$. A naive application of Cauchy--Schwarz would lead to terms of the form $|A(p\delta)|^2$ appearing with multiplicity $\gg p$ (from each choice $\beta = \overline{\alpha}\delta \alpha$). We remedy this by adding appropriate weights: setting
    \begin{equation*}
        I(\beta) = \{ \alpha: |\alpha|^2 = p \text{ and } v_p(\alpha \beta \overline{\alpha}) > v_p(\beta)\},
    \end{equation*}
    unique factorization of quaternions gives $|I(\beta)| \leq 16$, so
    \begin{align}\label{eq:fake_Cauchy}
        \left| \sum_{|\alpha|^2 = p} A\left( \frac{\alpha \beta \overline{\alpha}}{p} \right) \right|^2 \ll \sum_{\alpha \in I(\beta)} \left|A\left( \frac{\alpha \beta \overline{\alpha}}{p} \right)\right|^2 + p\sum_{\alpha \notin I(\beta)} \left|A\left( \frac{\alpha \beta \overline{\alpha}}{p} \right)\right|^2.
    \end{align}
    This dampens the problematic terms, so that denoting by $m_1(\delta)$ and $m_2(\delta)$ the multiplicities coming respectively from the first and second terms in the RHS of \eqref{eq:fake_Cauchy} leads to the desired bound
    \begin{align*}
        \frac{1}{p}\sum_{|\beta|^2 \leq z} \left| \sum_{|\alpha|^2 = p} A\left( \frac{\alpha \beta \overline{\alpha}}{p} \right)\right|^2  \ll\sum_{|\delta|^2 \leq z} \left(\frac{m_1(\delta)}{p} + m_2(\delta)\right) |A(\delta)|^2 \ll S(z),
    \end{align*}
    since we show the crucial estimate $m_2(\delta) \ll 1$ using the restriction $\alpha\not\in I(\beta)$. 

    Let us now mention another issue when we are forced to use the $\lambda_2(p)$. For that, assume $|\lambda_1(p)| \ll 1$ and $|\lambda_2(p)|^2 \asymp L\gg 1$ for all $p \asymp y^{1/8}$. The second Hecke operator acts roughly by
    \begin{align}\label{eq:fake_Hecke_2}
        \lambda_2(p) A(\beta) \approx \frac{1}{\sqrt{p}} \sum_{|\alpha|^2 = p} \left[ A\left( \alpha \beta \overline{\alpha} \right) + A\left( \frac{\alpha \beta \overline{\alpha}}{p^2} \right) \right].
        \end{align}
    The amplification scheme of \eqref{eq:Sound_amplification} leads to
    \begin{align}\label{eq:fake_amplification}
        L y^{1/8} \cdot S^{\sharp}\left(\frac{x}{y}\right) \ll \sum_{\substack{|\beta|^2 \leq \frac{x}{y} \\ \beta \in \M(\Vec{K})}} |A(\beta)|^2 \cdot \Bigg( \sum_{\substack{p \asymp y^{1/8} \\ p \nmid \beta}} |\lambda_2(p)|^2\Bigg) \approx \sum_{\substack{|\beta|^2 \leq \frac{x}{y} \\ \beta \in \M(\Vec{K})}} \sum_{\substack{p \asymp y^{1/8} \\ p \nmid \beta}} \abs{\sum_{|\alpha|^2 = p} A\left(\alpha\beta \overline{\alpha}\right)}^2
    \end{align}
    for a certain set $\M(\Vec{K})$ defining $S^\sharp$, where we applied \eqref{eq:fake_Hecke_2} and the terms $A(\alpha\beta\overline{\alpha}/p^2)$ were ignored since they can be directly treated by the argument in \eqref{eq:fake_Cauchy}.

    At this point, using Cauchy--Schwarz as in \cite{Sou10} (with any choice of weights) is a bad move: it leads to terms $|A(\delta)|^2$ with very large weighted multiplicity (on average $\approx y^{1/8}$) for many large $\delta$ ($|\delta|^2 \asymp x/y^{3/4}$) with little detectable structure ($v_p(\delta) = 0$ for all $p\asymp y^{1/8}$). Indeed, one considers the $\delta$ which are a conjugate by one $\alpha$ for each $p\asymp y^{1/8}$ in certain congruence classes. This issue would not arise if $\M(\Vec{K})$ were defined to control multiplicity of representations as a conjugate by each $\alpha$, but this would require the bound \eqref{eq:fake_p_multiple_sum} with $p\beta$ replaced by $\alpha\beta\overline{\alpha}$, which seems difficult since isolating $A(\alpha\beta\overline{\alpha})$ in \eqref{eq:fake_Hecke_1} or \eqref{eq:fake_Hecke_2} becomes wasteful due to the small coefficient $\frac{1}{\sqrt{p}}$. 
    
    Instead, we trade conjugation by $\alpha$ for powers of $p$ by observing from \eqref{eq:fake_Hecke_1} and \eqref{eq:fake_Hecke_2} that
    \begin{align}
        \lambda_2(p)A(\beta) &\approx \lambda_1(p)A(p\beta) - A(p^2\beta) - A(\beta), \label{eq:combinatorial_relation_1} \\
        \lambda_2(p)A(\beta) &\approx \lambda_2(p)A(p^2\beta) + \lambda_1(p) A(p^3 \beta) - A(p^4 \beta) - A(p^2 \beta). \label{eq:combinatorial_relation_2}
    \end{align}
    Then \eqref{eq:combinatorial_relation_1} leads to the required bound, provided that $L$ is sufficiently large so that the contribution of the terms $A(\beta)$ is subsumed by the LHS of \eqref{eq:fake_amplification}. Similarly, \eqref{eq:combinatorial_relation_2} leads to the required bound as long as $L \ll 1$, since then the factor of $\lambda_2(p)$ on the RHS does not nullify the required gain of $L$ in \eqref{eq:fake_amplification}. The case which requires using $\lambda_3(p)$ presents no further obstructions.

\subsection*{Acknowledgements}

We thank Elon Lindenstrauss, Lior Silberman, Kannan Soundararajan, and Liyang Yang for fruitful discussions and encouragement. This material is based upon work supported by the National Science Foundation under grant no.\ DMS-1926686. ZS was partially supported by the ERC grant HomDyn no.\ 833423.


\section{Preliminaries}

\subsection{The group \texorpdfstring{$\SV_n(\R)$}{} and its action on \texorpdfstring{$\HH^{n+2}$}{}}\label{subsection:SV_n}

In this section we recall how to use Clifford algebras to describe orientation-preserving isometries of hyperbolic spaces. We begin by reviewing the basic definitions and facts about Clifford algebras, which will be used throughout the paper. The material below is standard; proofs may be found in \cite{Ahl86} and \cite{EGM87}, which we mostly follow.  

\subsubsection{Clifford algebras}

The Clifford algebra $C_n$ is the (unital) associative algebra
over $\R$ generated by elements $i_1,\dots,i_n$ with the relations $i_g i_h= -i_h i_g$ for $g\ne h$, $i^2_h=-1$, and no others.
Note that $C_0$ can be identified with $\R$, $C_1$ with $\CC$, and $C_2$ with the Hamilton quaternions $\bH$. In the last case, $i,j$, and $k$ correspond respectively to $i_1,i_2$, and $i_1i_2$. Each element $a\in C_n$ has a unique representation of the form $a = \sum a_I I$, where $a_I\in\RR$ and the sum is over all products $I=i_{h_1}\cdots i_{h_r}$ with $1\leq h_1<\dots<h_r\leq n$. The empty product is included and identified with the real number $1$, and its coefficient is referred to as the {\it real part} ${\rm{Re}} (a)$. The {\it norm} of $a$ is $N(a) := \sum a_I^2 \in \R_{\geq 0}$. Clearly $C_n$ is a vector space over $\R$ of dimension $2^n$. 

\subsubsection{Clifford vectors}

The Clifford elements of the
form $x=x_0+x_1i_1+\dots+x_ni_n$ are called \emph{vectors}. They form an $(n+1)$-dimensional subspace $V^{n+1}$ which we shall identify with $\RR^{n+1}$. There are three involutions in $C_n$, similar to complex conjugation. The \emph{main involution} is the automorphism $a\mapsto a'$ obtained by replacing every $i_h$ by $-i_h$. The \emph{reverse involution} $a\mapsto a^*$ is obtained by reversing the order of the factors in each $I=i_{h_1}\cdots i_{h_p}$. It is an anti-isomorphism, so $(ab)^*=b^*a^*$. These involutions can be combined into the anti-isomorphism $a \mapsto \overline{a} := {a'}^* = {a^*}'$. If $x$ is a vector then $x=x^*$ and $x'=\overline{x}$, so all three involutions preserve $V^{n+1}$. If $x$ and $y$ are vectors one verifies that $x\overline{y}+y\overline{x}=2 \langle x,y \rangle$, where $\langle\cdot,\cdot\rangle$ is the usual inner product on $\R^{n+1}$. In particular $N(x) = \abs{x}^2 = x\overline{x}$, so that every non-zero vector $x$ is invertible with $x^{-1}=\abs{x}^{-1}\overline{x}$. It follows that the set $\Gamma_n$ of all products of vectors is a multiplicative group, called the \emph{Clifford group}. Another characterization of $\Gamma_n$ is the set of all invertible elements $a\in C_n$ so that $a V^{n+1}a'^{-1}=V^{n+1}$. In fact, the map $x\mapsto axa'^{-1}$ is an Euclidean isometry of $V^{n+1}$.   

\subsubsection{Hyperbolic isometries and the group $\SV_n(\R)$}

To describe the connection between Clifford algebras and hyperbolic isometries, let $\HH^{n+2}$ denote the upper half-space model for the hyperbolic $(n+2)$-space, which is  
\begin{equation*}
    \HH^{n+2}=\{x=x_0+x_1i_1+\dots+x_ni_n+x_{n+1}i_{n+1}\in V^{n+2}\mid x_{n+1}>0\}
\end{equation*}
with the hyperbolic metric $ds^2=x_{n+1}^{-2}\left(dx_0^2+\dots+dx_{n+1}^2\right)$. 

We let $\SV_n(\R)$ denote the set of all $(2\times 2)$-matrices $g=\begin{pmatrix}a&b\\c&d\end{pmatrix}$ with entries in $\Gamma_n\cup\{0\}$, satisfying $ab^*,cd^*\in V^{n+1}$ and $\mu(g) := ad^*-bc^*=1$ ($\mu$ is called the \emph{pseudo-determinant}). 
Then it turns out $\SV_n(\R)$ is a group, with the usual matrix multiplication, and it acts by isometries on $\HH^{n+2}$ by 
\begin{equation}\label{action1}
g\cdot x := (ax+b)(cx+d)^{-1}, 
\end{equation}
where we used the obvious embedding $C_n\subset C_{n+1}$. In fact, \eqref{action1} induces a short exact sequence
\begin{equation*}
    1 \to \{\pm 1\} \to SV_n(\R) \to \mathrm{Iso}^+(\HH^{n+2}) \to 1,
\end{equation*}
where ${\mathrm{Iso}^+(\HH^{n+2})}$ is the group of orientation-preserving isometries of $\H^{n+2}$. Using the hyperboloid model for $\HH^{n+2}$ we also have ${\mathrm{Iso}^+(\HH^{n+2})} \isom SO^+(1, n+2)$ (the identity component of $\SO(1, n+2)$) and $SV_n(\R) \isom \Spin(1, n+2)$.

\subsection{The lattice \texorpdfstring{$\SV_2(\ZZ)$}{}}\label{sec:lattice}

We now restrict the discussion to the case $n=2$. 
In other words, we consider the action of $\SV_2(\RR)$ on the upper half-space model for the hyperbolic $4$-space $\HH^4$. By convention we will identify an element $z\in\HH^4$ with its coordinates with respect to the basis $\{1,i_1,i_2,i_3\}$ of $V^4$, writing    
\begin{equation}\label{eq:coordinates}
    z=(x_0,x_1,x_2,y)\text{ with } y>0
\end{equation}
for an element $x_0+x_1i_1+x_2i_2+yi_3\in\HH^4$. 
In this case, $\SV_2(\RR)$ consists of matrices with entries in $\Gamma_2 \cup \{0\}$. It is easy to check that $\Gamma_2 \cup \{0\} = C_2$, which from now on we identify with the Hamilton quaternions $\bH$. This readily leads to
\begin{equation*}
    \SV_2(\R) = \left\{ \begin{pmatrix} a & b \\ c & d \end{pmatrix} \in M_2(\bH) \mid \begin{pmatrix} a & b \\ c & d \end{pmatrix} \begin{pmatrix} 0 & 1 \\ -1 & 0 \end{pmatrix} \begin{pmatrix} a^* & c^* \\ b^* & d^* \end{pmatrix} = \begin{pmatrix} 0 & 1 \\ -1 & 0 \end{pmatrix} \right\}.
\end{equation*}
Thus it is clear how to view $\SV_2(\RR)$ as the group of $\RR$-points of an affine algebraic group $\SV_2$ over $\Z$. We consider the group of $\ZZ$-points $\SV_2(\ZZ)\subset \SV_2(\RR)$. Concretely, $\SV_2(\ZZ)$ is the subgroup of $\SV_2(\R)$ consisting of matrices whose entries belong to the Lipschitz integral quaternions 
\begin{equation*}
\bH(\Z) = \Z+\Z i+\Z j+\Z k.
\end{equation*}
Then $\SV_2(\ZZ)$ is a non-uniform lattice in $\SV_2(\R)$, and we let $X$ denote the quotient space\footnote{This definition is consistent with the one given in \cref{subsection:intro_results}, since the image of $\SV_2(\Z)$ on $\Isom^+(\HH^4)$ is the group $\Gamma$ described there \cite[Proposition 3.1]{Pit05}.} 
$$
X=\SV_2(\Z)\bs\HH^4
$$
with the probability measure $dz$ coming from the Riemannian measure $d\vol(z)=dx_0dx_1dx_2dy/y^4$ on $\HH^4$.
By \cite[Proposition 1]{Kri88}, the set
\begin{equation*}
    \mathcal{F} = \left\{ z \in \HH^4 \mid -\frac{1}{2}\le x_0\le\frac{1}{2},\quad 0 \leq x_1, x_2 \leq \frac{1}{2},\quad\abs{z} \geq 1 \right\}
\end{equation*}
is a fundamental domain for the action of $\SV_2(\ZZ)$ (or more precisely of $\PSV_2(\ZZ)$) on $\HH^4$. In particular, we have $dz=\frac{d\vol}{\vol(\cF)}$. 
For $T\geq 1$ consider the regions
\begin{equation*}
    \mathcal{S}_T = \left\{ z \in \cF \mid y \ge T \right\}
\end{equation*}
and
\begin{equation*}
    \widetilde{\mathcal{S}}_T = \left\{ z \in \HH^4 \mid -\frac{1}{2} \leq x_0,x_1,x_2 \leq \frac{1}{2},\quad y \geq T \right\}.
\end{equation*}
While $\cS_T$ (for $T\to \infty$) parametrizes the cusp, it will be more convenient to work with $\widetilde{\cS}_T$ as it is more symmetric. We can easily do so since
\begin{equation}\label{eq:S_contained_in_fin_many_F}
    \widetilde{\mathcal{S}}_T = \mathcal{S}_T \cup \begin{pmatrix} i & 0\\ 0 & i' \end{pmatrix} \mathcal{S}_T \cup \begin{pmatrix} j & 0\\ 0 & j' \end{pmatrix} \mathcal{S}_T \cup \begin{pmatrix} k & 0\\ 0 & k' \end{pmatrix} \mathcal{S}_T,
\end{equation}
where the four components above are pairwise disjoint away from their boundaries, which have Riemannian volume zero. 

\subsection {Maass forms and their Fourier expansion}\label{sec:cusp-forms}

With our usual choice of coordinates \eqref{eq:coordinates} the Laplace--Beltrami operator is given by  
\begin{equation*}
    \Delta=y^2\left(\partial_{x_0}^2 + \partial_{x_1}^2 + \partial_{x_2}^2 + \partial_{y}^2\right) - 2 y\partial_y.
\end{equation*}

In our context, an automorphic form $\phi$ is an $\SV_2(\Z)$-invariant eigenfunction of $\Delta$ satisfying the moderate growth condition $\abs{\phi(z)}\ll y^M$ as $y \to \infty$, for some constant $M$. Writing $\lambda=(3/2)^2+r^2$ for the $\Delta$-eigenvalue, $\phi$ has a Fourier expansion (see e.g.\ \cite{Maa49})
\begin{equation}\label{eq:fourier-expansion}
    \phi(z) = \phi_0(y) + \sum_{0 \not= \beta \in V^3(\ZZ)} A(\beta) y^{3/2} K_{ir}\left(2 \pi \sqrt{N(\beta)} y\right) e\left(\mathrm{Re}(\beta z)\right),
\end{equation}
where $V^3(\Z) := \Z + i\Z + j\Z$, $K_{ir}$ denotes the $K$-Bessel function, and
\begin{equation*}
    \phi_0(y) = 
    \begin{cases}
        A_1 y^{3/2+ir} + A_2  y^{3/2-ir} & \text{ if } r \not= 0,\\
        A_1  y^{3/2} + A_2  y^{3/2}\log{y} & \text{ if } r = 0.
    \end{cases}
\end{equation*}

\begin{convention}\label{convention}
    We will always view the function $A(\cdot)$ as a function on all of $V^3$, rather than just on $V^3(\ZZ)$, by setting $A(\beta)=0$ whenever $\beta\notin V^3(\ZZ)$. 
\end{convention}
The form $\phi$ is called cuspidal, or a cusp form, if $A_1=A_2=0$, so that $\phi_0$ is identically zero.

\subsection {Hecke--Maass cusp forms}\label{subsection:Hecke_operators}

To define Hecke operators we will use the group of similitudes,  
$$
{\rm{GSV}}^+_2(\RR)=\left\{g=\begin{pmatrix}a&b\\ c&d \end{pmatrix}\in M_2(\bH) \mid ad^*-bc^*=\mu(g) \in \R^+, ab^*,dc^*\in V^3\right\}.  
$$
Like $\SV_2(\R)$, it acts on $\HH^4$ by $\eqref{action1}$.
Suppose that $g\in {\rm{GSV}}^+_2(\RR)$ belongs to the commensurator ${\rm{Comm}}_{{\rm{GSV}}^+_2(\RR)}(\SV_2(\ZZ))$ of $\SV_2(\Z)$ in ${\rm{GSV}}^+_2(\RR)$, i.e.\ $\SV_2(\Z)\bs (\SV_2(\Z)g\SV_2(\Z))$ is finite. 
The corresponding Hecke operator $T$ on $L^2(X)$ is defined by 
\begin{equation}\label{hecke-op1}
    Tf(x)=\sum_{s\in S}f(sx),     
\end{equation}
where $S\subset \SV_2(\Z) g \SV_2(\Z)$ is a fixed set of representatives for $\SV_2(\Z)\bs \SV_2(\Z) g\SV_2(\Z)$. We will occasionally use the notation $T\sim g$.

To each prime number $p\ne2$ we shall associate two Hecke operators $T_1(p)$ and $T_2(p)$; they will play a central role in the proof of \cref{thm:main}. To define them we follow the presentation of Pitale from \cite{Pit05} and choose, for a fixed odd prime $p$, an element $\hat{\alpha}$ ($=\hat{\alpha}(p)$) of $\bH(\ZZ)$ such that $N(\hat{\alpha}) = p$ and $p$ does not divide $\hat{\alpha}^n$ for all $n\ge1$. 
Let $T_1(p)$ and $T_2(p)$ denote the operators corresponding to the following diagonal matrices: 
\begin{equation}\label{eq:3Tp}
    T_1(p)\sim \begin{pmatrix}1&0\\0&p\end{pmatrix}\qquad \text{ and } \qquad T_2(p)\sim \begin{pmatrix}\hat{\alpha}&0\\0&\hat{\alpha}'p \end{pmatrix}.
\end{equation}
(Note that $T_1(p)$ is what Pitale calls $T_p$ and $T_2(p)$ is what Pitale calls $T_{p^2}$.)

Let $\cH_p$ denote the $\CC$-algebra of operators generated by $T_1(p)$, $T_2(p)$, and the identity operator $I$. We have $T_1(p)\cdot T_2(p)=T_2(p) \cdot T_1(p)$, so that $\cH_p$ is commutative. Also, Hecke operators corresponding to different primes commute, so that the $\C$-algebra generated by all the $\cH_p$ is commutative. We denote this algebra by $\cH$ and call it the \emph{Hecke algebra}. The Hecke operators, being defined by isometries, commute with $\Delta$. A cusp form $\phi$ is called a {\it Hecke--Maass cusp form} if it is a joint eigenfunction of $\cH$ (thus a joint eigenfunction of $\Delta$ and $\cH$). 

For what comes next it will be convenient to define a third operator $T_3(p)$ by the relation
\begin{equation}\label{eq:Hecke_relation_no_normalization}
    T_1(p)^2 - (p+1)T_2(p) - T_3(p) = \left(1+p+p^2+p^3\right)I. 
\end{equation}

\begin{remark}
    In fact $T_3(p)$ corresponds to a double coset, namely
    \begin{equation}\label{eq:4T_p}
        T_3(p)\sim \begin{pmatrix}1&0\\0&p^2\end{pmatrix},
    \end{equation}
    and this forms our main motivation for using $T_1(p),T_2(p),T_3(p)$. However, we will not use \eqref{eq:4T_p}. 
\end{remark}


\section{The comparison inequality}

\subsection{Statement of result}

As noted in the introduction, to prove \cref{thm:non-escape_of_mass}, we closely follow the strategy of \cite{Sou10}.
The main technical result of the paper is the following theorem, which is the analog of \cite[Theorem 3]{Sou10} in our context.   

\begin{theorem}\label{thm:main}
    There exist absolute constants $C, R > 0$ such that the following holds. Let $A(\beta)$ be Fourier coefficients of a Hecke--Maass cusp form on $X$ as in \eqref{eq:fourier-expansion}. Then for any $1\leq y\leq x$, 
    \begin{equation*}
        \sum_{\substack{0 \not= \beta \in V^3(\Z) \\ N(\beta) \leq x/y}} \abs{A(\beta)}^2 \leq C\frac{(1+\log{y})^R}{y^{1/8}} \sum_{\substack{0 \not= \beta \in V^3(\Z) \\ N(\beta) \leq x}} \abs{A(\beta)}^2.
    \end{equation*}
\end{theorem}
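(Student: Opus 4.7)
The plan is to follow the strategy of \cite[Theorem 3]{Sou10}, adapted to the rank-$2$ and quaternionic setting described in the introduction's sketch. Write $S(z) := \sum_{0 \neq \beta \in V^3(\Z),\ N(\beta) \leq z} |A(\beta)|^2$, and fix a dyadic set $\mathcal{P}$ of odd primes $p \asymp y^{1/8}$, of cardinality $|\mathcal{P}| \asymp y^{1/8}/\log y$. For a parameter $K$ to be optimized, decompose
\begin{equation*}
    S(x/y) = S^\sharp(x/y) + S^\flat(x/y),
\end{equation*}
where a coefficient $|A(\beta)|^2$ is placed in $S^\flat$ if $\beta$ has ``large $p$-content'' for at least $K$ primes $p \in \mathcal{P}$ (tracking both $p \mid \beta$ and divisibility by quaternions $\alpha$ with $N(\alpha) = p$), and in $S^\sharp$ otherwise.

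For $S^\flat$, the goal is to iterate the pointwise estimate
\begin{equation*}
    \sum_{\substack{N(\beta) \leq z \\ p \mid \beta}} |A(\beta)|^2 \ll |\lambda_1(p)|^2 \cdot S(z),
\end{equation*}
terminating the induction with the trivial bound $S(z) = 0$ for $z < 1$. This estimate is obtained by isolating $A(p\beta)$ in the Hecke relation \eqref{eq:fake_Hecke_1} for $T_1(p)$; the key technical point is the conjugation sum $\frac{1}{\sqrt{p}} \sum_{N(\alpha) = p} A(\alpha \beta \overline{\alpha}/p)$, for which a naive Cauchy--Schwarz would produce each $|A(\delta)|^2$ with multiplicity $\gg p$. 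Instead, one performs the weighted Cauchy--Schwarz \eqref{eq:fake_Cauchy} using the set $I(\beta) = \{\alpha : N(\alpha) = p,\ v_p(\alpha \beta \overline{\alpha}) > v_p(\beta)\}$, which has $|I(\beta)| \leq 16$ by unique factorization in $\bH(\Z)$. After interchanging sums the total weighted multiplicity with which each $|A(\delta)|^2$ appears is $O(1)$, because the restriction $\alpha \notin I(\beta)$ essentially determines $\beta$ from $\delta$ and $p$.

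For $S^\sharp$, the Hecke relation \eqref{eq:Hecke_relation_no_normalization}, normalized so as to produce eigenvalues of natural size, yields $\lambda_1(p)^2 - \lambda_2(p) - \lambda_3(p) = O(1)$, so at each $p \in \mathcal{P}$ at least one of $|\lambda_1(p)|^2, |\lambda_2(p)|, |\lambda_3(p)|$ is $\gg 1$. Partition $\mathcal{P}$ according to which $\ell \in \{1,2,3\}$ dominates and according to a dyadic scale $L$ of the dominant $|\lambda_\ell(p)|^2$. Soundararajan-style amplification in the spirit of \eqref{eq:Sound_amplification} yields
\begin{equation*}
    L |\mathcal{P}| \cdot S^\sharp(x/y) \ll \sum_{\beta \in S^\sharp} |A(\beta)|^2 \sum_{\substack{p \in \mathcal{P} \\ p \nmid \beta}} |\lambda_\ell(p)|^2,
\end{equation*}
and the right-hand side is opened using the Hecke action. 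When $\ell = 1$ this reduces to the $S^\flat$-style conjugation sum estimate above. When $\ell \in \{2,3\}$, rather than directly opening $\lambda_\ell(p) A(\beta)$ via \eqref{eq:fake_Hecke_2}, which would entail a delicate Cauchy--Schwarz with little margin due to the coefficient $1/\sqrt{p}$, we invoke the combinatorial identities \eqref{eq:combinatorial_relation_1}--\eqref{eq:combinatorial_relation_2} to express $\lambda_\ell(p) A(\beta)$ as a small combination of $A(p^j \beta)$'s, trading quaternionic conjugation for multiplication by powers of $p$. This reduces the $\ell \in \{2,3\}$ analysis to the already controlled $\ell = 1$ case; the two identities cover the regimes $L \gg 1$ and $L \ll 1$ respectively, so that in each case the stray factors of $\lambda_1(p)$ or $\lambda_2(p)$ on the right are absorbed by the gain of $L$ on the left.

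The main obstacle will be the careful multiplicity bookkeeping in the weighted Cauchy--Schwarz, both for the $S^\flat$ iteration and inside the amplification for $S^\sharp$. The essential combinatorial input is that unique factorization in $\bH(\Z)$ controls the number of conjugations by $\alpha$ with $N(\alpha) = p$ that can increase $v_p(\alpha \beta \overline{\alpha})$; without this the argument would lose a factor of $p$ per prime and the target exponent $1/8$ would be unattainable. Once these two building blocks are in hand, the final optimization of $K$ (balancing $S^\sharp$ against $S^\flat$) and summation over the dyadic scales $L$ proceeds as in \cite{Sou10}, yielding the claimed inequality with a polylogarithmic loss.
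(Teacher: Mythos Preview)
Your proposal is essentially a restatement of the introduction's sketch, and as such captures the overall architecture correctly. However, there is a genuine gap in the treatment of $\ell = 3$ (the case $|\lambda_1(p)|^2, |\lambda_2(p)|^2 \leq \tfrac{1}{100}$, forcing $|\lambda_3(p)|^2 \asymp 1$). The identities \eqref{eq:combinatorial_relation_1}--\eqref{eq:combinatorial_relation_2} are specific to $\lambda_2(p)A(\beta)$; they do not yield an analogous expression for $\lambda_3(p)A(\beta)$, and here $L \asymp 1$ so neither the ``$L$ large'' nor ``$L$ small'' regime applies cleanly. In the paper this case is handled via the explicit $T_3(p)$ relation (\cref{lemma:T3_relation}), which contains a \emph{double} conjugation sum $\tfrac{1}{p}\sum_{i,j} A(\alpha_j'\alpha_i'\beta\overline{\alpha_i}\,\overline{\alpha_j}/p^2)\cdot \1_p(\alpha_i'\beta\overline{\alpha_i})$. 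Controlling this term (called $\mathfrak{S}$) requires a separate multiplicity analysis: collapsing the $i$-sum to $O(1)$ terms via \cref{lemma:v_p_of_conjugate}, splitting $\delta = \alpha_i'\beta\overline{\alpha_i}/p$ according to whether $v_p(\delta) = 0$ or $1$, invoking \cref{lemma:quaternion_multiplicity} in the former case and a further Hecke manipulation (writing $\delta = p\gamma$ and expanding $\lambda_1(p)A(p\gamma)$) in the latter. None of this is covered by the $\ell = 1,2$ machinery you describe.

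A second gap concerns the bookkeeping. The amplification for $\ell \in \{2,3\}$ produces terms $A(p^j\beta)$ with $j$ up to $4$ (note \eqref{eq:combinatorial_relation_2} contains $A(p^4\beta)$). Swapping summation order then requires separate sets $\M_\ell(K_\ell)$ for $\ell = 1,2,3,4$ with independent parameters, not a single $K$; correspondingly the $S^\flat$ bound must allow $d$ to be a product of $\ell$-th prime powers, and the resulting factor is $\sum_{a+2b \leq \ell} |\lambda_1(p)|^{2a}|\lambda_2(p)|^{2b}$ rather than a pure power of $|\lambda_1(p)|^2$. Deriving this (\cref{prop:main-sums-along-multiples}) needs the auxiliary sums $R^{p,\ell}_d$ and the two-variable recursion of \cref{lem:S_to_R-general}, which your outline does not set up. Finally, the paper does not close by direct induction as in \cite{Sou10} but via a recursive inequality (\cref{prop:main_technical_inequality}) fed into a general lemma on compactly supported functions (\cref{lemma:recursion_asymptotics}); your ``optimization of $K$ and summation over dyadic scales $L$ proceeds as in \cite{Sou10}'' understates what is involved.
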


\subsection{\texorpdfstring{\cref{thm:main} implies \cref{thm:non-escape_of_mass}}{}}

The proof of \cref{thm:main} will occupy most of the remainder of the paper; before going any further, let us show how to derive \cref{thm:non-escape_of_mass} from it. 
We shall make use of the regions $\mathcal{S}_T$ and $\widetilde{\mathcal{S}}_T$ defined in \cref{sec:lattice}. 

\begin{proposition}\label{prop:bound_for_mass_in_cusp}
    There exist $R,C>0$ such that for every $T\ge1$ and Hecke--Maass cusp form $\phi\in L^2(X)$ with $\norm{\phi}_2=1$,  
    \begin{equation*}
        \int_{\mathcal{S}_T} \abs{\phi(z)}^2 dz \le C\frac{(1+\log{T})^R}{T^{1/4}}.
    \end{equation*}
    
\end{proposition}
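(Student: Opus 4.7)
The approach is a direct adaptation of the deduction of non-escape of mass from the main comparison inequality in \cite{Sou10}.

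\textbf{Step 1 (Parseval in the cusp).} By \eqref{eq:S_contained_in_fin_many_F} and the isometric invariance of the Riemannian measure, it suffices to bound $\int_{\widetilde{\mathcal{S}}_T} |\phi(z)|^2 \, d\vol(z)$, up to an absolute constant. On this region, $\phi$ admits the Fourier expansion \eqref{eq:fourier-expansion}, with vanishing constant term since $\phi$ is cuspidal. Applying Parseval on the torus $(\R/\Z)^3$ in the variables $(x_0, x_1, x_2)$, integrating over $y \geq T$ against $dy/y^4$, and substituting $u = 2\pi\sqrt{N(\beta)}\,y$, we obtain
\begin{equation*}
\int_{\widetilde{\mathcal{S}}_T} |\phi|^2 \, d\vol \asymp \sum_{0 \neq \beta \in V^3(\Z)} |A(\beta)|^2 \cdot J_r\!\left(2\pi\sqrt{N(\beta)}\, T\right), \qquad J_r(a) := \int_a^\infty |K_{ir}(u)|^2 \frac{du}{u}.
\end{equation*}

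\textbf{Step 2 (Bessel estimates and truncation).} Standard uniform bounds on the $K$-Bessel function give, for some absolute $C_0 > 0$, that $J_r(a) \ll (1 + \log(1 + |r|/a))^{C_0}$ for $0 < a \leq 1 + |r|$ and $J_r(a) \ll (1+|r|)^{C_0} e^{-a}$ for $a > 1 + |r|$. Hence the sum in Step 1 is dominated, up to an exponentially small tail, by $(1 + \log T)^{C_0} \sum_{N(\beta) \leq (1+|r|)^2/T^2} |A(\beta)|^2$.

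\textbf{Step 3 (Applying \cref{thm:main} and normalizing).} Apply \cref{thm:main} with $x = (1+|r|)^2$ and $y = T^2$, noting $(T^2)^{1/8} = T^{1/4}$, to get
\begin{equation*}
\sum_{N(\beta) \leq (1+|r|)^2/T^2} |A(\beta)|^2 \ll \frac{(1 + \log T)^R}{T^{1/4}} \sum_{N(\beta) \leq (1+|r|)^2} |A(\beta)|^2.
\end{equation*}
To close the argument, one bounds the right-hand sum using $\|\phi\|_2 = 1$: a matching Bessel \emph{lower} bound $J_r(a) \gg (1+|r|)^{-C_0}$ in some range $a \asymp 1 + |r|$, combined with Parseval on $\widetilde{\mathcal{S}}_{T_0}$ for $T_0 \asymp 1/(1+|r|)$ and the fact that $\int_{\widetilde{\mathcal{S}}_{T_0}} |\phi|^2 \, d\vol \leq c \|\phi\|_2^2 = c$, yields $\sum_{N(\beta) \leq (1+|r|)^2} |A(\beta)|^2 \ll (1 + |r|)^{C_0'}$. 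The accumulated powers of $(1+|r|)$ then cancel, leaving only the stated polylogarithmic dependence on $T$.

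\textbf{Main obstacle.} The principal technical work lies in Step 2: establishing uniform $K$-Bessel estimates (both upper bounds and, for Step 3, matching lower bounds in a suitable range) that are polylogarithmic in $T$ and polynomial in $1+|r|$, with all powers of $r$ eventually cancelling. Once these estimates are in place, the combinatorial skeleton of the argument is essentially that of \cite{Sou10}, with \cref{thm:main} slotting into the role of his Theorem 3.
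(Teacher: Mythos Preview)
Your Steps 1 and the Parseval identity are fine and match the paper. The gap is in Steps 2--3: the claimed cancellation of the accumulated powers of $1+|r|$ is not justified, and without it your bound is not uniform in $\phi$. Your upper bound $J_r(a)\ll (1+\log(1+|r|/a))^{C_0}$ already carries a $\log|r|$ loss (take $a\asymp T$), and your lower-bound step, which requires going down to $T_0\asymp 1/(1+|r|)$, forces you to control $\int_{\widetilde{\mathcal S}_{T_0}}|\phi|^2\,d\vol$ for $T_0<1$; this region meets on the order of $T_0^{-3}\asymp (1+|r|)^3$ translates of $\mathcal F$, so the bound you get on $\sum_{N(\beta)\le (1+|r|)^2}|A(\beta)|^2$ carries a genuine positive power of $1+|r|$. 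There is no mechanism in your outline that removes these factors, and since the proposition demands uniformity in $\phi$ (indeed, this is exactly what is needed for non-escape of mass as $r\to\infty$), the argument as written does not close.

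The paper avoids Bessel estimates altogether by a simple swap of summation and integration. After your Step~1 one has
\[
\int_{\widetilde{\mathcal S}_T}|\phi|^2\,d\vol
=\int_1^\infty\Bigg(\sum_{N(\beta)\le y^2/T^2}|A(\beta)|^2\Bigg)\,|K_{ir}(2\pi y)|^2\,\frac{dy}{y},
\]
and now \cref{thm:main} is applied \emph{pointwise in the integration variable $y$}, with $x=y^2$ and with $T^2$ in the role of the scaling parameter. The same Bessel weight $|K_{ir}(2\pi y)|^2\,dy/y$ sits on both sides, so one obtains directly
\[
\mathcal I_T(\phi)\ \le\ C\,\frac{(1+\log T)^R}{T^{1/4}}\,\mathcal I_1(\phi),
\]
and $\mathcal I_1(\phi)\le\|\phi\|_2^2=1$ finishes the proof with no $r$-dependence ever entering. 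This is the step your proposal is missing.
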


\begin{proof}
    Let $\phi$ be as above and for each $T\ge1$ set $\mathcal{I}_T(\phi) := \int_{\mathcal{S}_T} \abs{\phi(z)}^2 dz$. Using \eqref{eq:S_contained_in_fin_many_F} and the Fourier expansion \eqref{eq:fourier-expansion} of $\phi$ we obtain
    \begin{align*}
        4 \vol(\cF) \cdot \mathcal{I}_T(\phi) & = \int_{\widetilde{\mathcal{S}}_T} \abs{\phi(z)}^2 d\vol(z) \\
        &= \int_{T}^\infty \int_{-\frac{1}{2}}^{\frac{1}{2}} \int_{-\frac{1}{2}}^{\frac{1}{2}} \int_{-\frac{1}{2}}^{\frac{1}{2}} \abs{\phi(z)}^2 dx_0 dx_1 dx_2 \frac{dy}{y^4} \\
        & = \int_T^\infty \sum_{0 \ne \beta \in V^3(\ZZ)} \abs{A(\beta)}^2  \abs{K_{ir}\left(2 \pi \sqrt{N(\beta)} y\right)}^2 \frac{dy}{y} \\
        & = \sum_{0 \ne \beta \in V^3(\ZZ)} \abs{A(\beta)}^2 \int_{T \sqrt{N(\beta)}}^\infty  \abs{K_{ir}\left(2 \pi y\right)}^2 \frac{dy}{y} \\
        &= \int_{1}^\infty  \Bigg(\sum_{\substack{0 \ne \beta \in V^3(\ZZ) \\ N(\beta) \leq y^2/T^2}} \abs{A(\beta)}^2\Bigg)\cdot \abs{K_{ir}\left(2 \pi y\right)}^2 \frac{dy}{y}.
    \end{align*}

    Applying \cref{thm:main} to the sum over $\beta$ shows that there are absolute constants $R,C>0$ such that
    \begin{equation}\label{eq:comparison-eq}
        \mathcal{I}_T(\phi) \leq C \frac{(1+\log{T})^R}{T^{1/4}} \mathcal{I}_1(\phi).
    \end{equation} 
    On the other hand $\mathcal{I}_1(\phi)\leq \norm{\phi}_2^2 = 1$. Thus the result follows from \eqref{eq:comparison-eq}.
    
\end{proof}

\begin{proof}[Proof of \cref{thm:non-escape_of_mass}.]
    Let $\phi_j \in L^2(X)$ be a sequence of joint eigenfunctions of $\cH$ and $\Delta$ with unit norm and Laplace eigenvalues $\lambda_j \to \infty$. Suppose that the probability measures $\mu_j = \abs{\phi_j(x)}^2 dx$ converge weak-* to a measure $\mu$. 
    Since the $\phi_j$ are in the discrete spectrum of $\Delta$ and the residual spectrum is finite\footnote{In fact, from the Fourier expansion of the Eisenstein series given in \cite[Eqs.\ (12) and (14)]{Kri88} it follows that the only residual form for $\SV_2(\Z) \bs \H^4$ is the constant function.}, by passing to a subsequence we may assume that the $\phi_j$ are all Hecke--Maass cusp forms. Thus the result follows from \cref{prop:bound_for_mass_in_cusp} by approximating the $\mu$-measure of $X$ by the measures of the bounded sets $X\setminus \cS_T$ as $T\to\infty$.   
    
\end{proof}


\section{Two lemmas on integral quaternions}\label{sec:basic-arithmetic}

We collect some facts on integral quaternions that will be used in the sequel. 
For $\gamma \in \bH(\Z)$ and $m \in \Z$, write $m \mid \gamma$ if $m$ divides each of the four coordinates of $\gamma$. If $q$ is a prime, $v_q(\gamma)$ denotes the largest integer such that $q^{v_q(\gamma)} \mid \gamma$. 
Now fix a prime number $p>2$. 
Recall that there are $8(p+1)$ integral quaternions (that is, elements of $\bH(\Z)$) of norm $p$, and fix a set of representatives
\begin{equation*}
\{\alpha_1,\dots,\alpha_{p+1}\}\subset \bH(\Z)
\end{equation*}
for the orbits of the left action of the group of unit quaternions $\{\pm 1, \pm i, \pm j, \pm k\}$ on them.  
\begin{lemma}\label{lemma:v_p_of_conjugate}
Let $\beta\in V^3(\ZZ)$. 
\begin{itemize}
\item For all $\alpha\in \bH(\ZZ)$ with $N(\alpha)=p$ we have 
    \begin{equation*}
        v_p(\beta) \leq v_p(\alpha' \beta \overline{\alpha}) \leq v_p(\beta) + 2.
    \end{equation*}
\item For all $\alpha$ as in the previous item and any odd prime $q \ne p$ we have 
    \begin{equation*}
        v_q(\beta) = v_q(\alpha' \beta \overline{\alpha})
    \end{equation*}
\item There are at most two distinct $i\in \{1, 2, \dots, p+1\}$ such that
    \begin{equation*}
        v_p(\alpha_i' \beta \overline{\alpha_i}) \ne v_p(\beta).
    \end{equation*}
\end{itemize}
\end{lemma}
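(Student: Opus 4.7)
The plan is to work with the isomorphism $\bH(\Z_\ell) \cong M_2(\Z_\ell)$, which is valid for any odd prime $\ell$ since $\bH \otimes \Q_\ell$ is split. Under this isomorphism, the norm $N(\alpha)$ corresponds to the determinant $\det A$, the quaternionic conjugate $\overline{\alpha}$ to the classical adjugate $A^\sharp$, and the main involution $\alpha'$ to conjugation by a fixed $K \in \SL_2(\Z_\ell)$, since $\alpha' = k\alpha k^{-1}$ for all $\alpha \in \bH$ (easily verified from $k^2 = -1$, $ki = j$, $kj = -i$). Moreover, $v_\ell(\gamma)$ for a quaternion $\gamma$ coincides with the minimum $\ell$-adic valuation of the entries of the corresponding matrix.

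For the first two bullets, I would apply the Smith normal form at $p$ to write $A = UDV$ with $U, V \in \GL_2(\Z_p)$ and $D = \diag(1, p)$, so that $A^\sharp = V^\sharp D^\sharp U^\sharp$ with $D^\sharp = \diag(p, 1)$. Setting $B'' := VK^{-1}BU^\sharp$ (which satisfies $v_p(B'') = v_p(\beta)$, since $V, K^{-1}, U^\sharp$ are units), one checks that $A^c B A^\sharp$ equals $DB''D^\sharp$ up to left and right multiplication by elements of $\GL_2(\Z_p)$, so $v_p(\alpha'\beta\overline{\alpha}) = v_p(DB''D^\sharp)$. The entries of $DB''D^\sharp$ have valuations $v_p(b''_{11})+1$, $v_p(b''_{12})$, $v_p(b''_{21})+2$, $v_p(b''_{22})+1$: each is at least $v_p(B'') = v_p(\beta)$, and the minimum is at most $v_p(B'')+2$ (the $+2$ being attained only when the minimum of $B''$ is at the $(2,1)$ entry), which proves the first bullet. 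The second bullet is immediate: for $q \neq p$ odd, $\det A = p \in \Z_q^\times$, so $A, A^\sharp, K \in \GL_2(\Z_q)$, and flanking $B$ by these units preserves $v_q$.

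For the third bullet, homogeneity reduces to $v_p(\beta) = 0$, and one must show that at most two $\alpha_i$ give $v_p(\alpha_i'\beta\overline{\alpha_i}) \geq 1$. A short counting argument (the $8(p+1)$ norm-$p$ integral quaternions from Jacobi's four-square theorem, modulo the $8$-element group $\bH(\Z)^\times$, yield $p+1$ orbits which inject into and therefore biject with the $p+1$ left $\GL_2(\Z_p)$-orbits of $\det = p$ matrices) identifies the $\alpha_i$ with the $p+1$ elements of $\PP^1(\FF_p)$ via $\alpha_i \mapsto \ker A_i$, where we now view all matrices in $M_2(\FF_p)$. Since $A_i A_i^\sharp = pI = 0$ in $M_2(\FF_p)$, the matrix $A_i^\sharp$ has rank one with image equal to $\ker A_i$. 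Rewriting $A_i^c B A_i^\sharp = K \cdot A_i B' A_i^\sharp$ with $B' := K^{-1}B$, the vanishing $A_i^c B A_i^\sharp \equiv 0 \pmod p$ is equivalent to $A_i B' A_i^\sharp = 0$, and a brief vector chase shows this holds iff $\ker A_i$ is $B'$-invariant. An elementary case analysis shows a matrix in $M_2(\FF_p)$ has at most two invariant lines unless it is scalar. Finally, $B' \equiv \lambda I \pmod p$ would mean $\beta \equiv \lambda k \pmod p$, impossible when $v_p(\beta) = 0$ since $\beta \in V^3(\Z)$ has no $k$-component.

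The hard part, I expect, will be the third bullet, and specifically identifying the role of the main involution: it is precisely the twist $B' = K^{-1}B$ it introduces that makes the potentially problematic scalar-matrix degeneration translate to $\beta$ being proportional to $k$ modulo $p$, which is forbidden by the ambient space $V^3 \subset \bH$ omitting the $k$-direction. Without this twist, the analogous statement for $\alpha\beta\overline{\alpha}$ would fail, since $\beta$ equal to a nonzero scalar mod $p$ would yield $p+1$ invariant lines.
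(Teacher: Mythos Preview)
Your argument is correct. The paper does not prove this lemma directly; it simply cites \cite[Proposition~5.8~(2)]{Pit05}. Your route via the local splitting $\bH(\Z_p)\cong M_2(\Z_p)$ is therefore a genuinely different, self-contained alternative, and it has the virtue of making the role of the hypothesis $\beta\in V^3$ in the third bullet completely transparent: it is precisely what excludes the degenerate case $B'\equiv\lambda I\pmod p$ (which would make all $p+1$ lines $B'$-invariant), since $\beta\equiv\lambda k\pmod p$ with $\beta\in V^3(\Z)$ forces $\lambda\equiv 0$ and hence $v_p(\beta)\ge 1$. Two small points are worth tightening. First, there is a harmless typo in your definition of $B''$: to obtain $A^c B A^\sharp = (KU)\,D B'' D^\sharp\,U^\sharp$ one needs $B'' = VK^{-1}BV^\sharp$, not $VK^{-1}BU^\sharp$; the rest of the valuation computation is unaffected. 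Second, the injectivity of the map from $\bH(\Z)^\times$-orbits of norm-$p$ integral quaternions to left $\GL_2(\Z_p)$-orbits, which you invoke for the bijection with $\PP^1(\FF_p)$, deserves one sentence of justification: if $\alpha_1\alpha_2^{-1}\in\bH(\Z_p)^\times$ then $p\mid\alpha_1\overline{\alpha_2}$ in $\bH(\Z)$, and since $N(\alpha_1\overline{\alpha_2}/p)=1$ this forces $\alpha_1\overline{\alpha_2}/p\in\bH(\Z)^\times$, i.e.\ $\alpha_1\in\bH(\Z)^\times\alpha_2$.
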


\begin{proof}
    This follows directly from \cite[Proposition 5.8 (2)]{Pit05}.
    
\end{proof}

As a corollary we get the following useful fact.
\begin{lemma}\label{lemma:quaternion_multiplicity}
Let $\delta \in V^3(\ZZ)$. If there exists a subset $S\subset \{\alpha\in \bH(\ZZ)\mid N(\alpha)=p\}$ with $\abs{S} > 16$ such that $p^2\mid \alpha^* \delta \alpha$ for each $\alpha\in S$, then $p^2 \mid \delta$.
\end{lemma}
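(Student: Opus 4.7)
The plan is to reduce the statement to Lemma~\ref{lemma:v_p_of_conjugate}(c) via a short identity relating the two forms of quaternionic conjugation appearing in the two lemmas, and then apply pigeonhole. Throughout, write $U = \{\pm 1, \pm i, \pm j, \pm k\}$ for the group of unit quaternions.

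First I would verify that for $\delta \in V^3(\Z)$ and $\alpha \in \bH(\Z)$ the element $\alpha^* \delta \alpha$ again lies in $V^3$: using $\delta^* = \delta$, the element $\alpha^* \delta \alpha$ is fixed by the reverse involution, which forces its $k$-coefficient to vanish, so $v_p(\alpha^* \delta \alpha)$ is well-defined. The key link between the present lemma and Lemma~\ref{lemma:v_p_of_conjugate} is the identity
\[
    \alpha^* \delta \alpha = (\overline{\alpha})' \, \delta \, \overline{\overline{\alpha}},
\]
which follows from the direct computations $(\overline{\alpha})' = \alpha^*$ and $\overline{\overline{\alpha}} = \alpha$. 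Combined with the fact that $\alpha \mapsto \overline{\alpha}$ is a bijection of the set of norm-$p$ integral quaternions, this says that $\alpha^* \delta \alpha$ takes exactly the same values as the expression $\gamma' \delta \overline{\gamma}$ controlled by Lemma~\ref{lemma:v_p_of_conjugate}.

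Next I would check that $v_p(\alpha^* \delta \alpha)$ is constant on right $U$-orbits, using the identity $(\alpha u)^* \delta (\alpha u) = u^*(\alpha^* \delta \alpha)u$ and the observation that, for any $v \in V^3$ and $u \in U$, the map $v \mapsto u^* v u$ is a signed permutation of the three coordinates of $v$ (for instance $ivi = -v_0 - v_1 i + v_2 j$). The same computation shows that $v_p(\alpha' \delta \overline{\alpha})$ is constant on left $U$-orbits, so Lemma~\ref{lemma:v_p_of_conjugate}(c) is equivalent to the assertion that at most $2\cdot 8 = 16$ integral quaternions $\alpha$ of norm $p$ satisfy $v_p(\alpha' \delta \overline{\alpha}) \ne v_p(\delta)$. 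Transferring this through the bijection $\alpha \mapsto \overline{\alpha}$, the set
\[
    E := \{\alpha \in \bH(\Z) \,:\, N(\alpha) = p, \; v_p(\alpha^* \delta \alpha) \ne v_p(\delta)\}
\]
also satisfies $|E| \leq 16$.

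To conclude, suppose for contradiction that $p^2 \nmid \delta$, so $v_p(\delta) \leq 1$. The hypothesis then gives $v_p(\alpha^* \delta \alpha) \geq 2 > v_p(\delta)$ for every $\alpha \in S$, i.e.\ $S \subseteq E$. Hence $|S| \leq 16$, contradicting $|S| > 16$. The only genuine bookkeeping step is the identification of $\alpha^* \delta \alpha$ with the expression $\gamma' \delta \overline{\gamma}$ from Lemma~\ref{lemma:v_p_of_conjugate} (and tracking how left and right $U$-orbits correspond under conjugation); after this, the argument is a short pigeonhole.
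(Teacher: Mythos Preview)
Your proof is correct and follows essentially the same approach as the paper: both reduce to \cref{lemma:v_p_of_conjugate}(c) via the identity $\alpha^*\delta\alpha=(\overline{\alpha})'\,\delta\,\overline{\overline{\alpha}}$ and a pigeonhole on the $U$-orbits of norm-$p$ quaternions. Your write-up is more explicit than the paper's in verifying that $v_p$ is constant along $U$-orbits (the paper uses this implicitly when passing from an element of $\overline{S}$ to the orbit representative $\alpha_i$), but the underlying argument is the same.
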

\begin{proof}
Since each orbit of the unit quaternions as above has size $8$, it follows that $\overline{S} := \{ \overline{\alpha} \mid \alpha \in S\}$ must intersect at least three distinct such orbits. Then there are three distinct $i \in \{1, 2, \dots, p+1\}$ such that $p^2 \mid (\overline{\alpha_i})^* \delta \overline{\alpha_i} = \alpha_i'\delta \overline{\alpha_i}$. Thus the result follows from the last item of \cref{lemma:v_p_of_conjugate}.

\end{proof}


\section{The arithmetic structure of Fourier coefficients of Hecke--Maass cusp forms} \label{section:arithmetic-properties}

From now on we will work with a single Hecke--Maass cusp form $\phi$, and $p$ will denote an odd prime number. Let $\lambda_\ell(p)$ denote the eigenvalue of $\phi$ under the normalized operator $p^{-e_\ell} T_\ell(p)$, i.e.
\begin{equation*}
    p^{-e_\ell}T_\ell(p) \phi = \lambda_\ell(p)  \phi,
\end{equation*}
where $e_1 = 3/2$, $e_2 = 2$, and $e_3 = 3$. For future reference we state the Hecke relation \eqref{eq:Hecke_relation_no_normalization} in terms of these eigenvalues, where it becomes 
\begin{equation}\label{eq:Hecke_relation}
    \lambda_1(p)^2 - \left(1+ \frac{1}{p}\right) \lambda_2(p) - \lambda_3(p) = 1+\frac{1}{p}+\frac{1}{p^2} + \frac{1}{p^3}.
\end{equation}

The proof of \cref{thm:main} crucially (and in some sense solely) relies on the following consequence of \eqref{eq:Hecke_relation}, giving an arithmetic structure on the Fourier coefficients of a Hecke--Maass cusp form. 
We view it as the analog of the Hecke multiplicativity property that was used in \cite{Sou10}. 
To state it, retain the notation of \cref{sec:basic-arithmetic} and let $\{\alpha_1,\dots,\alpha_{p+1}\}$ be a fixed choice of representatives for the orbits of the left action of the unit quaternions on the set of integral quaternions of norm $p$.  

\begin{lemma}
Let $\phi$ be a Hecke--Maass cusp form with Fourier expansion
\begin{equation*}
    \phi(z)=\sum_{0 \not= \beta \in V^3(\ZZ)} A(\beta) y^{3/2} K_{ir}\left(2 \pi \sqrt{N(\beta)} y\right) e\left(\mathrm{Re}(\beta z)\right).
\end{equation*}

\begin{lemmalist}
    \item \label{lemma:T1_relation}
    For each $\beta\in V^3(\Z)$ and odd prime $p$,
    $$\lambda_1(p) A(\beta) = A(p\beta) + A(\beta/p) + \frac{1}{\sqrt{p}} \sum_{i=1}^{p+1} A\left(\frac{\alpha'_i \beta \overline{\alpha_i}}{p}\right).$$

    \item \label{lemma:T2_relation}
    For each $\beta\in V^3(\Z)$ and odd prime $p$,
    $$\lambda_2(p) A(\beta) = \frac{1}{\sqrt{p}} \sum_{i=1}^{p+1} \left[ A(\alpha_i' \beta \overline{\alpha_i}) +  A\left(\frac{\alpha'_i \beta \overline{\alpha_i}}{p^2}\right)\right] + \mathcal{E}(\beta, p) A(\beta),$$ 
    where
    \begin{equation*}
        \mathcal{E}(\beta, p) := \frac{1}{p^2} \times \begin{cases}
        p^2-1 & \quad \text{ if } p \mid \beta, \\
        -1 & \quad \text{ if } p \mid N(\beta) \text{ and } p \nmid \beta,\\
        p-1 & \quad \text{ if } \left(\frac{-N(\beta)}{p}\right) = 1, \\
        -p-1 & \quad \text{ if } \left(\frac{-N(\beta)}{p}\right) = -1. 
        \end{cases}
    \end{equation*}

    \item \label{lemma:T3_relation}

    For each $\beta\in V^3(\Z)$ and odd prime $p$,
    \begin{align*}
        \lambda_3(p) A(\beta) &= A(p^2\beta) + A(\beta) \cdot \left(\1_p(\beta) - \frac{p+1}{p}\mathcal{E}(\beta, p) - \frac{p^2+p+1}{p^3}\right) + A\left(\frac{\beta}{p^2}\right) \\
        & \quad + \frac{1}{\sqrt{p}} \sum_{i=1}^{p+1} \left[ A\left(\alpha'_i \beta \overline{\alpha_i}\right) \cdot \left( \1_p\left(\alpha'_i \beta \overline{\alpha_i}\right) - \frac{1}{p}\right) + A\left(\frac{\alpha'_i \beta \overline{\alpha_i}}{p^2}\right)\cdot \left( \1_p(\beta) - \frac{1}{p} \right) \right] \\
        & \quad + \frac{1}{p} \sum_{j =1}^{p+1}\sum_{i =1}^{p+1} A\left(\frac{\alpha'_j\alpha'_i \beta \overline{\alpha_i} \: \overline{\alpha_j}}{p^2}\right) \cdot \1_p(\alpha'_i \beta \overline{\alpha_i}),
    \end{align*}
    where $\1_p(\gamma)$ denotes the indicator function of $p\mid \gamma$.
\end{lemmalist}
\end{lemma}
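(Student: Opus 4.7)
The plan is to establish parts (a) and (b) directly from the definition \eqref{hecke-op1} of the Hecke operators via coset decompositions of the double cosets in \eqref{eq:3Tp}, and then to deduce part (c) by combining these two relations through the algebraic identity \eqref{eq:Hecke_relation} defining $\lambda_3(p)$.

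For part (a), one decomposes $\SV_2(\Z)\,\diag(1,p)\,\SV_2(\Z)$ into a disjoint union of single right cosets. These organize into three families (essentially computed in \cite{Pit05}): an ``upper'' representative whose action on $z$ sends the Fourier mode $\beta$ to $p\beta$, a ``lower'' representative sending it to $\beta/p$, and a family of $p+1$ representatives indexed by the orbit representatives $\alpha_i$ of integral quaternions of norm $p$, whose action sends $\beta$ to $\alpha_i'\beta\overline{\alpha_i}/p$. Applying $p^{-3/2}T_1(p)$ to the Fourier expansion \eqref{eq:fourier-expansion}, matching up the $K_{ir}$ dependence under the $y$-scaling induced by each representative, and comparing coefficients of $e(\mathrm{Re}(\beta z))$ on both sides of the eigenvalue equation produces the identity, with the factor $1/\sqrt{p}$ arising from the normalization $p^{-3/2}$. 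Part (b) proceeds in the same way starting from $T_2(p)\sim \diag(\hat\alpha,\hat\alpha'p)$; the only new feature is that certain representatives act diagonally on the Fourier mode indexed by $\beta$ itself, producing a local character sum whose evaluation is $\mathcal{E}(\beta,p)$. The four cases correspond to the possible local behaviors of $\beta$ at $p$: $\beta \equiv 0 \pmod p$, only $N(\beta) \equiv 0 \pmod p$, or $\beta$ being ``split'' or ``inert'' at $p$ according to whether $-N(\beta)$ is a quadratic residue modulo $p$.

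For part (c), the Hecke relation \eqref{eq:Hecke_relation} rearranges to
\[
\lambda_3(p) A(\beta) = \lambda_1(p)^2 A(\beta) - \left(1+\tfrac{1}{p}\right)\lambda_2(p) A(\beta) - \left(1+\tfrac{1}{p}+\tfrac{1}{p^2}+\tfrac{1}{p^3}\right) A(\beta).
\]
I would compute $\lambda_1(p)^2 A(\beta)$ by iterating part (a): the three terms of the first application, once each is hit with $\lambda_1(p)$ a second time via (a) (with the understanding that $\lambda_1(p) A(\gamma)=0$ whenever $\gamma \notin V^3(\Z)$, by Convention \ref{convention}), produce the diagonal contributions $A(p^2\beta)$, $A(\beta/p^2)$ and two copies of $A(\beta)$; the mixed sums $\frac{1}{\sqrt{p}}\sum_i A(\alpha_i'\beta\overline{\alpha_i})$ and $\frac{1}{\sqrt{p}}\sum_i A(\alpha_i'\beta\overline{\alpha_i}/p^2)$, each appearing from two cross-terms; and the double-conjugation sum $\frac{1}{p}\sum_{i,j} A(\alpha_j'\alpha_i'\beta\overline{\alpha_i}\,\overline{\alpha_j}/p^2)$. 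Inserting this together with the expansion of $\lambda_2(p)A(\beta)$ from (b) into the right-hand side above and collecting yields the claim: the coefficient of $A(\beta)$ collapses to $\1_p(\beta) - \tfrac{p+1}{p}\mathcal{E}(\beta,p) - \tfrac{p^2+p+1}{p^3}$ after combining the constant $-(1+\tfrac{1}{p}+\tfrac{1}{p^2}+\tfrac{1}{p^3})$ with the two duplicated $A(\beta)$ contributions, while the indicators $\1_p(\cdot)$ in the mixed terms encode, again via Convention \ref{convention}, exactly when $A(\gamma/p)$ or $A(\gamma/p^2)$ fails to vanish.

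The hard part is this combinatorial bookkeeping in (c): because the formula in (a) is stated for arguments in $V^3(\Z)$, iterating it requires carefully checking, term by term, when $\gamma/p \in V^3(\Z)$ or $\alpha_i'\gamma\overline{\alpha_i}/p \in V^3(\Z)$, using the $p$-adic constraints of \cref{lemma:v_p_of_conjugate}, and then re-expressing the resulting indicator-weighted sums in the symmetric form of the claimed identity. Parts (a) and (b) themselves can essentially be extracted from \cite{Pit05}, so the genuinely new labor lies in assembling (c).
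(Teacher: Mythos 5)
Your proposal matches the paper's proof: parts (a) and (b) are taken directly from Pitale's Propositions 5.8 and 5.11, and part (c) is obtained exactly as you describe, by expanding $\lambda_1(p)^2A(\beta)-\left(1+\tfrac1p\right)\lambda_2(p)A(\beta)-\left(1+\tfrac1p+\tfrac1{p^2}+\tfrac1{p^3}\right)A(\beta)$ via a second application of (a) together with (b). One cosmetic remark: the diagonal contribution is $(1+\1_p(\beta))A(\beta)$ rather than literally ``two copies'' of $A(\beta)$, since $\lambda_1(p)A(\beta/p)$ can only be expanded when $p\mid\beta$ --- this is precisely the source of the $\1_p(\beta)$ in the coefficient, which your final stated formula correctly reflects.
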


\begin{proof}
    The first two items are Propositions 5.8 and 5.11 of \cite{Pit05}. The last item follows from a direct computation combining the first two items and the Hecke relation \eqref{eq:Hecke_relation}.
 
\end{proof}


\section{Bounding Fourier coefficients along multiples of an integer}\label{sec:sums-along}

For a positive integer $d$ and $z>0$ we set
\begin{equation*}
    S_d(z) := \sum_{\substack{0 \not= \beta \in V^3(\Z) \\ N(\beta) \leq z \\ d \mid \beta}} \abs{A(\beta)}^2 = \sum_{\substack{0 \not= \delta \in V^3(\Z) \\ N(\delta) \leq z/d^2}} \abs{A(d \delta)}^2 \qquad\text{ and } \qquad S(z) := S_1(z).
\end{equation*}

The following proposition is the main result of this section. 

\begin{proposition}\label{prop:main-sums-along-multiples}
    There exists an absolute constant $A>0$ such that for any prime number $p>2$, integers $k, c \geq 1$ with $p \nmid c$, and real number $z>0$ we have
    \begin{equation*}
        S_{cp^k}(z) \leq A^k \Bigg( \sum_{\substack{a, b\geq 0 \\ a+2b \leq k}} \abs{\lambda_1(p)}^{2a} \abs{\lambda_2(p)}^{2b} \Bigg) S_c\left(\frac{z}{p^{2k}}\right).
    \end{equation*}
\end{proposition}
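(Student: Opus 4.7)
I would proceed by induction on $k$, with the inductive step for $k \geq 4$ resting on a clean linear recurrence in $k$ for the coefficients $A(cp^k\delta)$ with $\delta \in V^3(\Z)$ fixed. For $k \geq 4$, combine three Hecke relations: \cref{lemma:T1_relation} at $\beta = cp^{k-1}\delta$, \cref{lemma:T2_relation} at $\beta = cp^{k-2}\delta$, and \cref{lemma:T1_relation} at $\beta = cp^{k-3}\delta$. The first expresses $A(cp^k\delta)$ in terms of $A(cp^{k-1}\delta)$, $A(cp^{k-2}\delta)$, and the conjugation sum
\[
W^{k-2}(\delta) := \frac{1}{\sqrt{p}}\sum_{i=1}^{p+1} A\bigl(cp^{k-2}\alpha'_i\delta\overline{\alpha_i}\bigr).
\]
The second rewrites $W^{k-2}(\delta)$ as $(\lambda_2(p) - \mathcal{E})A(cp^{k-2}\delta) - W^{k-4}(\delta)$ with $\mathcal{E} = (p^2-1)/p^2$ (the value of $\mathcal{E}(\beta,p)$ when $p \mid \beta$, which applies since $k \geq 3$). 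The third expresses the \emph{same} $W^{k-4}(\delta)$ as $\lambda_1(p) A(cp^{k-3}\delta) - A(cp^{k-2}\delta) - A(cp^{k-4}\delta)$, purely in terms of $A$-values with no conjugation sum remaining. Substituting produces the cancellation
\begin{equation*}
A(cp^k\delta) = \lambda_1(p)\bigl[A(cp^{k-1}\delta) + A(cp^{k-3}\delta)\bigr] - \bigl(1 + \lambda_2(p) + \tfrac{1}{p^2}\bigr) A(cp^{k-2}\delta) - A(cp^{k-4}\delta),
\end{equation*}
valid for all $\delta \in V^3(\Z)$ when $k \geq 4$.

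\textbf{Closing the induction.} Squaring and summing over $\delta$ with $N(cp^k\delta) \leq z$, and applying the inductive hypothesis to each of the four resulting sums $S_{cp^{k-j}}(z/p^{2j})$ (which by the inductive bound is $\leq A^{k-j}B_{k-j}(p) S_c(z/p^{2k})$, the norm bookkeeping working out cleanly), yields
\begin{equation*}
S_{cp^k}(z) \leq C_0\bigl[|\lambda_1(p)|^2 A^{k-1}B_{k-1} + (1+|\lambda_2(p)|^2) A^{k-2}B_{k-2} + |\lambda_1(p)|^2 A^{k-3}B_{k-3} + A^{k-4}B_{k-4}\bigr] S_c(z/p^{2k})
\end{equation*}
for an absolute $C_0$. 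The elementary inequalities $|\lambda_1(p)|^2 B_{j-1} \leq B_j$, $|\lambda_2(p)|^2 B_{j-2} \leq B_j$, and $B_{j'} \leq B_j$ for $j' \leq j$, all immediate from the $(a, b)$-lattice definition of $B_j(p) := \sum_{a+2b \leq j}|\lambda_1(p)|^{2a}|\lambda_2(p)|^{2b}$, bound the bracketed quantity by $C_0(4/A + O(1/A^2)) \cdot A^k B_k(p)$, which is $\leq A^k B_k(p)$ once $A$ is chosen large and absolute. The base cases $k = 1, 2, 3$ are handled by analogous but truncated combinations of the Hecke relations, in which the residual conjugation sums — now partly non-integral, e.g.\ $c\alpha'_i\delta\overline{\alpha_i}/p$ need not lie in $V^3(\Z)$ for all $i$ — are controlled using \cref{lemma:v_p_of_conjugate} (restricting to at most two nonvanishing indices when $v_p(\delta) = 0$) together with a decomposition of $S_{cp^k}(z)$ by the exact $p$-valuation of $\beta$.

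\textbf{Main obstacle.} The crux is avoiding a fatal factor of $p$ that a naive Cauchy--Schwarz on $W^{k-2}(\delta)$ would incur, since the sum has $p+1$ terms with only a $1/\sqrt{p}$ normalization. The clean recurrence circumvents this entirely via the cancellation described above: the $T_2$ relation and the "lowered" $T_1$ relation both produce the same $W^{k-4}(\delta)$, and their combination eliminates the conjugation sum completely. The remaining delicate point is the base cases, where integrality failures of $\alpha'_i\delta\overline{\alpha_i}/p^j$ for some $i$ prevent an equally clean cancellation; \cref{lemma:v_p_of_conjugate} and \cref{lemma:quaternion_multiplicity} furnish the arithmetic control needed to bound the boundary terms by absolute constants.
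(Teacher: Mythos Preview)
Your approach is correct and genuinely different from the paper's. The paper never writes down a pointwise recurrence for $A(cp^k\delta)$; instead it introduces auxiliary sums
\[
R^{p,\ell}_d(z)=\frac{1}{p}\sum_{\substack{\beta\in V^3(\Z)\\ N(\beta)\le z,\ d\mid\beta}}\Bigl|\sum_{i=1}^{p+1}A\bigl(\alpha_i'\beta\overline{\alpha_i}/p^\ell\bigr)\Bigr|^2
\]
and proves a system of recursive inequalities linking $S_{dp}$, $R^{p,1}_d$, and $R^{p,2}_d$ (their Lemma~\ref{lem:S_to_R-general}), then runs a joint induction on $k$ bounding $S_{cp^k}$, $R^{p,1}_{cp^k}$, and $R^{p,2}_{cp^k}$ simultaneously. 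Your observation that combining $T_1$ at $cp^{k-1}\delta$, $T_2$ at $cp^{k-2}\delta$, and $T_1$ at $cp^{k-3}\delta$ makes the conjugation sums $W^{k-2}$ and $W^{k-4}$ cancel exactly is sharper: it yields a clean four-term linear recurrence in $k$ with bounded coefficients, so the inductive step for $k\ge 4$ needs no weighted Cauchy--Schwarz and no $R$-sums at all. The trade-off is modularity: the paper's bound on $R^{p,\ell}_{cp^k}$ (their Lemma~\ref{lemma:R_to_S}, proved via the $I(\beta)$ splitting you allude to) is reused several times later in the paper (Lemmas~\ref{lemma:large_eigenvalues} and~\ref{lemma:sum_of_conjs_mult_p_bound}), whereas your recurrence is tailored to this proposition. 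Your base cases $k\le 3$ still require exactly that $I(\beta)$ argument, so in the end both routes rest on the same arithmetic input from \cref{lemma:v_p_of_conjugate} and \cref{lemma:quaternion_multiplicity}; you just defer it to the boundary rather than carrying it through the induction.
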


Before turning to the proof of \cref{prop:main-sums-along-multiples}, we record the following immediate consequence.

\begin{corollary}\label{cor:general_divisor}
    There exists an absolute constant $A>0$ such that for every odd integer $d \geq 1$ and real number $z>0$ we have
    \begin{equation*}
        S_d(z) \leq \left(\prod_{p\mid d} A^{v_p(d)}
        \Bigg( \sum_{\substack{a, b\geq 0 \\ a+2b \leq v_p(d)}} \abs{\lambda_1(p)}^{2a} \abs{\lambda_2(p)}^{2b} \Bigg)\right) S\left(\frac{z}{d^2}\right).
    \end{equation*}
\end{corollary}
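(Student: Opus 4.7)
The plan is a direct induction on the number of distinct prime factors of $d$, peeling off one prime at a time by invoking \cref{prop:main-sums-along-multiples}.

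First I will write the factorization $d = p_1^{k_1} \cdots p_r^{k_r}$ with distinct odd primes $p_j$ and exponents $k_j = v_{p_j}(d) \geq 1$. For $0 \leq j \leq r$ I set $d_j := p_{j+1}^{k_{j+1}} \cdots p_r^{k_r}$ and $z_j := z/(p_1^{k_1} \cdots p_j^{k_j})^2$, so that $d_0 = d$, $d_r = 1$, $z_0 = z$, $z_r = z/d^2$, and $d_j = p_{j+1}^{k_{j+1}} \cdot d_{j+1}$ with $d_{j+1}$ coprime to $p_{j+1}$.

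For each $0 \leq j \leq r-1$, I will then apply \cref{prop:main-sums-along-multiples} with prime $p = p_{j+1}$, exponent $k = k_{j+1}$, cofactor $c = d_{j+1}$, and the real number $z_j$ in place of $z$. This yields
$$S_{d_j}(z_j) \leq A^{k_{j+1}} \Bigg( \sum_{\substack{a, b \geq 0 \\ a + 2b \leq k_{j+1}}} \abs{\lambda_1(p_{j+1})}^{2a} \abs{\lambda_2(p_{j+1})}^{2b} \Bigg) S_{d_{j+1}}(z_{j+1}).$$
Chaining these $r$ inequalities for $j = 0, 1, \dots, r-1$ telescopes into the claimed bound, since $S_{d_r}(z_r) = S(z/d^2)$ and the resulting product of per-prime factors coincides with the product over $p \mid d$ appearing in the statement.

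As \cref{prop:main-sums-along-multiples} does all the substantive work, there is no real obstacle in this derivation; it amounts to straightforward combinatorial book-keeping. The only things to verify are that at each stage $d_{j+1}$ is coprime to $p_{j+1}$ (immediate from how the $d_j$ are defined) and that the successively accumulated denominators $p_{j+1}^{2 k_{j+1}}$ inside $S$ telescope to $d^2$ at the final step.
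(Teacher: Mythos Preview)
Your proposal is correct and is essentially identical to the paper's own argument, which reads in full: ``Follows directly from repeated application of \cref{prop:main-sums-along-multiples} for each prime factor of $d$.'' You have simply spelled out the telescoping explicitly, and all the hypotheses of \cref{prop:main-sums-along-multiples} (in particular $p_{j+1} \nmid d_{j+1}$ and $k_{j+1}, d_{j+1} \geq 1$) are satisfied at each step.
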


\begin{proof}
    Follows directly from repeated application of \cref{prop:main-sums-along-multiples} for each prime factor of $d$. 
    
\end{proof}

The main idea for the proof of \cref{prop:main-sums-along-multiples} is to use the Hecke relations to reduce the size of the common divisor. This is done via the recursive inequalities in the next result. To state it, for a prime number $p>2$ and an integer $\ell \geq 0$ denote
\begin{equation*}
    R^{p, \ell}_{d}(z) := \frac{1}{p}\sum_{\substack{0\not=\beta \in V^3(\Z) \\ N(\beta) \leq z \\ d \mid \beta}} \abs{\sum_{i=1}^{p+1} A\left(\frac{\alpha_i' \beta \overline{\alpha_i}}{p^\ell}\right) }^2.
\end{equation*}

\begin{lemma}\label{lem:S_to_R-general}
For any prime number $p>2$, integers $d\ge1,\ell \ge 0$, and real number $z>0$ we have:
\begin{lemmalist}
    \item \label{lemma:S_to_R}
    
    $$
        S_{dp}(z) \ll \abs{\lambda_1(p)}^2  S_{d}\left(\frac{z}{p^2}\right) + S_{\frac{d}{(d, p)}}\left(\frac{z}{p^4}\right) + R^{p, 1}_{d}\left(\frac{z}{p^2}\right).
    $$
    \item \label{lemma:R_to_R}
    
    $$
        R^{p, \ell}_{dp^\ell}(z) \ll \left(\abs{\lambda_2(p)}^2 + 1\right) S_{d}\left(\frac{z}{p^{2\ell}}\right) + R^{p, 2}_{d}\left(\frac{z}{p^{2\ell}}\right).
    $$
    \item \label{lemma:R_to_S}
    For any integers $k\geq 0$ and $c \geq 1$ with $p\nmid c$, 
    \begin{equation*}
        R^{p, \ell}_{cp^k}(z) \ll  S_{cp^{\max(0, k-\ell)}}\left(\frac{z}{p^{2(\ell-1)}}\right).
    \end{equation*}
\end{lemmalist}
\end{lemma}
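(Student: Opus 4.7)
My plan is to prove each of the three bounds by applying the corresponding Hecke relation to isolate the target quantity and then summing carefully. Parts (a) and (b) amount to direct manipulations of \cref{lemma:T1_relation,lemma:T2_relation}; part (c) is the main obstacle, where a naive application of Cauchy--Schwarz would be too lossy and must be replaced by a weighted version informed by \cref{lemma:v_p_of_conjugate,lemma:quaternion_multiplicity}.

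For part (a), I would isolate $A(p\beta)$ in \cref{lemma:T1_relation}, apply $|x+y+z|^2 \le 3(|x|^2+|y|^2+|z|^2)$, and sum over $\beta \in V^3(\Z)$ with $d \mid \beta$ and $N(\beta) \le z/p^2$. The left-hand side collects as $S_{dp}(z)$ via $\gamma = p\beta$; the three right-hand terms give $|\lambda_1(p)|^2 S_d(z/p^2)$, $S_{d/(d,p)}(z/p^4)$ (substituting $\beta = p\gamma$ in the $A(\beta/p)$ term and noting $d \mid p\gamma \iff d/(d,p) \mid \gamma$), and $R^{p,1}_d(z/p^2)$ respectively. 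For part (b), the substitution $\beta = p^\ell\delta$ in $R^{p,\ell}_{dp^\ell}(z)$ identifies $\alpha_i'\beta\overline{\alpha_i}/p^\ell = \alpha_i'\delta\overline{\alpha_i}$, and solving \cref{lemma:T2_relation} for $\sum_i A(\alpha_i'\delta\overline{\alpha_i})$ yields the desired estimate once one observes the uniform bound $|\mathcal{E}(\delta,p)| \le 1$ (immediate from the explicit case-by-case formula, the extremal case being $(p^2-1)/p^2$).

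The hard part is (c). Setting $\delta_i(\beta) := \alpha_i'\beta\overline{\alpha_i}/p^\ell$, the naive Cauchy--Schwarz $|\sum_i A(\delta_i(\beta))|^2 \le (p+1)\sum_i|A(\delta_i(\beta))|^2$ would produce an extra factor of $p$ beyond the target. My plan is a weighted Cauchy--Schwarz with $I(\beta) := \{i : v_p(\alpha_i'\beta\overline{\alpha_i}) > v_p(\beta)\}$ (of size $\le 2$ by \cref{lemma:v_p_of_conjugate}) and weights $w_i = 1$ for $i \in I(\beta)$ and $w_i = 1/p$ otherwise, so that $\sum_i w_i \le 3$ and
\begin{equation*}
    \Big|\sum_i A(\delta_i(\beta))\Big|^2 \le 3\sum_{i \in I(\beta)} |A(\delta_i(\beta))|^2 + 3p\sum_{i \notin I(\beta)}|A(\delta_i(\beta))|^2.
\end{equation*}
Using injectivity of $\beta \mapsto \delta_i(\beta)$ for each fixed $i$, and checking that its image lies in $\{\delta : cp^{\max(0,k-\ell)} \mid \delta,\ N(\delta) \le z/p^{2(\ell-1)}\}$ (from $\gcd(c,p)=1$ and the lower bound in \cref{lemma:v_p_of_conjugate}), the $I(\beta)$-contribution is controlled by $S_{cp^{\max(0,k-\ell)}}(z/p^{2(\ell-1)})$, since the $1/p$ factor in the definition of $R$ absorbs the $\sum_i$. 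For the $I^c(\beta)$-contribution, inverting $\delta_i(\beta) = \delta$ gives $\beta = p^{\ell-2}\alpha_i^*\delta\alpha_i$, and the condition $i \notin I(\beta)$ is equivalent to $v_p(\alpha_i^*\delta\alpha_i) = v_p(\delta) + 2$; normalizing by $\delta_0 := \delta/p^{v_p(\delta)}$ (so $p \nmid \delta_0$), \cref{lemma:quaternion_multiplicity} caps the number of admissible $\alpha_i$ at $16$, so the multiplicity at each $\delta$ is an absolute constant and summing yields the required $\ll S_{cp^{\max(0,k-\ell)}}(z/p^{2(\ell-1)})$.
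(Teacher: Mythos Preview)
Your proposal is correct and follows essentially the same route as the paper. Parts (a) and (b) are handled identically, and for part (c) the paper likewise splits via the set $I(\beta)=\{i:v_p(\alpha_i'\beta\overline{\alpha_i})>v_p(\beta)\}$ (with $|I(\beta)|\le 2$), applies the same weighted Cauchy--Schwarz, bounds the $I(\beta)$-multiplicity trivially by $p+1$ (your ``$1/p$ absorbs the $\sum_i$''), and bounds the $I^c(\beta)$-multiplicity by $16$ via \cref{lemma:quaternion_multiplicity} applied to $\delta/p^{v_p(\delta)}$, exactly as you outline.
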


We emphasize that the implied constants in \cref{lem:S_to_R-general} are absolute. 

\begin{proof}
    Using \cref{lemma:T1_relation} and Cauchy--Schwarz, we get
    \begin{align*}
         S_{pd}(z) & = \sum_{\substack{0 \not= \beta \in V^3(\Z) \\ N(\beta) \leq z/p^2 \\ d \mid \beta}} \abs{A(p\beta)}^2 = \sum_{\substack{0 \not= \beta \in V^3(\Z) \\ N(\beta) \leq z/p^2 \\ d \mid \beta}} \left|\lambda_1(p)A(\beta) - A(\beta/p) - \frac{1}{\sqrt{p}} \sum_{i=1}^{p+1} A\left(\frac{\alpha_i'\beta\overline{\alpha_i}}{p}\right)\right|^2 \\
         & \leq 3 \sum_{\substack{0 \not= \beta \in V^3(\Z) \\ N(\beta) \leq z/p^2 \\ d \mid \beta}} \left(\abs{\lambda_1(p)}^2 \abs{A(\beta)}^2 + \abs{A(\beta/p)}^2 + \frac{1}{p}\abs{\sum_{i=1}^{p+1} A\left(\frac{\alpha_i'\beta\overline{\alpha_i}}{p}\right)}^2 \right) \\
         & \ll\abs{\lambda_1(p)}^2 \cdot S_{d}\left(\frac{z}{p^2}\right) + S_{\frac{d}{(d, p)}}\left(\frac{z}{p^4}\right)  + \frac{1}{p} \sum_{\substack{0 \not= \beta \in V^3(\Z) \\ N(\beta) \leq z/p^2 \\ d \mid \beta}}\abs{\sum_{i=1}^{p+1} A\left(\frac{\alpha_i'\beta\overline{\alpha_i}}{p}\right)}^2,
    \end{align*}
    where for the middle term we changed variables $\beta\mapsto p\beta$ and used the fact that $d \mid p\beta$ if and only if $\frac{d}{(d, p)} \mid \beta$. This gives \cref{lemma:S_to_R}.
    
    To prove the second item, note first that $R^{p, \ell}_{dp^\ell}(z) = R^{p, 0}_{d}\left(z/p^{2\ell}\right)$. Using \cref{lemma:T2_relation} we obtain
    \begin{align*}
        R^{p, 0}_{d}\left(\frac{z}{p^{2\ell}}\right) &= \sum_{\substack{0\not=\beta \in V^3(\Z) \\ N(\beta) \leq z/p^{2\ell} \\ d \mid \beta}} \abs{\frac{1}{\sqrt{p}} \sum_{i=1}^{p+1} A\left(\alpha_i' \beta \overline{\alpha_i}\right) }^2 \\
        &= \sum_{\substack{0\not=\beta \in V^3(\Z) \\ N(\beta) \leq z/p^{2\ell} \\ d \mid \beta}} \abs{\lambda_2(p)A(\beta) - \underbrace{\mathcal{E}(\beta, p)}_{\ll 1}A(\beta) -\frac{1}{\sqrt{p}} \sum_{i=1}^{p+1} A\left(\frac{\alpha_i' \beta \overline{\alpha_i}}{p^2}\right) }^2 \\
        & \ll \sum_{\substack{0\not=\beta \in V^3(\Z) \\ N(\beta) \leq z/p^{2\ell} \\ d \mid \beta}} \left( \abs{\lambda_2(p)}^2 \abs{A(\beta)}^2 + \abs{A(\beta)}^2 + \frac{1}{p} \abs{\sum_{i=1}^{p+1} A\left(\frac{\alpha_i' \beta \overline{\alpha_i}}{p^2}\right) }^2 \right),
    \end{align*}
    and this immediately gives \cref{lemma:R_to_R}. 

    Finally, we prove the last item. For $0 \not=\beta\in V^3(\Z)$ denote
    \begin{equation*}
        I(\beta) :=  \left\{ 1\leq i\leq p+1 : v_p(\alpha_i'\beta \overline{\alpha_i}) \geq v_p(\beta) + 1 \right\}. 
    \end{equation*}
    Then \cref{lemma:v_p_of_conjugate} gives $\abs{I(\beta)} \leq 2$. Using this and Cauchy--Schwarz we obtain
    $$
    \abs{\sum_{i=1}^{p+1} A\left(\frac{\alpha_i'\beta\overline{\alpha_i}}{p^\ell}\right)}^2\ll \sum_{i\in I(\beta)} \abs{A\left(\frac{\alpha_i'\beta\overline{\alpha_i}}{p^\ell}\right)}^2 + p\sum_{i \not\in I(\beta)} \abs{A\left(\frac{\alpha_i'\beta\overline{\alpha_i}}{p^\ell}\right)}^2,
    $$
    so that   
    \begin{equation}\label{eq:first-bound}
        R^{p, \ell}_{cp^k}(z)
         \ll \sum_{\substack{0 \not= \beta \in V^3(\Z) \\ N(\beta) \leq z \\ cp^k \mid \beta}} \left[\frac{1}{p}\sum_{i \in I(\beta)} \abs{A\left(\frac{\alpha_i'\beta\overline{\alpha_i}}{p^\ell}\right)}^2 + \sum_{i \not\in I(\beta)} \abs{A\left(\frac{\alpha_i'\beta\overline{\alpha_i}}{p^\ell}\right)}^2\right].
    \end{equation}
  
    Observe that for each $i\in\{1,\dots,p+1\}$ and $\beta$ as above we have $cp^{\max(0, k-\ell)} \mid \frac{\alpha_i'\beta\overline{\alpha_i}}{p^\ell}$, since $(c, p)=1$, and $N\left(\frac{\alpha_i'\beta\overline{\alpha_i}}{p^\ell}\right) = N(\beta)p^{-2(\ell-1)} \leq z p^{-2(\ell-1)}$.  
    Thus the RHS of \eqref{eq:first-bound} is equal to 
    \begin{equation}\label{eq:delta-form}
    \sum_{\substack{0 \not= \delta \in V^3(\Z) \\ N(\delta) \leq z/p^{2(\ell-1)} \\ cp^{\max(0, k-\ell)} \mid \delta}} h(\delta) \cdot \abs{A(\delta)}^2,
    \end{equation}
    for certain quantities $h(\delta) \geq 0$, and to finish the proof it suffices to show that $h(\delta) \ll 1$. 
    For each $\delta$ appearing in \eqref{eq:delta-form} we see from \eqref{eq:first-bound} that 
    \begin{equation}\label{eq:2term-bound}
        h(\delta)\le \frac{m_1(\delta)}{p}+m_2(\delta),
    \end{equation}
    where 
    \begin{align*}
        m_1(\delta) &= \# \left\{ (\beta, i) :  \beta \in V^3(\Z), i\in I(\beta) \text{ satisfy } \frac{\alpha_i'\beta\overline{\alpha_i}}{p^\ell} = \delta \right\} \\
        &= \#\Big\{i\in\{1, \dots, p+1\} : v_p(\delta)+1 \geq v_p(\alpha_i^*\delta\alpha_i) \geq 2-\ell \Big\}
    \end{align*}
    and
    \begin{align*}
        m_2(\delta) &=  \# \left\{ (\beta, i) :  \beta \in V^3(\Z), i\not\in I(\beta) \text{ satisfy } \frac{\alpha_i'\beta\overline{\alpha_i}}{p^\ell} = \delta \right\} \\
        &\leq \#\Big\{i\in\{1,\dots,p+1\} : v_p(\alpha_i^*\delta\alpha_i)> v_p(\delta)+1\Big\}.
    \end{align*}
    
    For $m_1(\delta)$ we use the obvious bound $m_1(\delta)\le p+1$. Applying \cref{lemma:quaternion_multiplicity} to $\widetilde{\delta} := p^{-v_p(\delta)} \delta \in V^3(\Z)$, since $v_p(\widetilde{\delta}) = 0$, it follows that $m_2(\delta)\le 16$. Therefore \eqref{eq:2term-bound} implies $h(\delta) \ll 1$, which completes the proof of \cref{lemma:R_to_S}.
    
\end{proof}

\begin{proof}[Proof of \cref{prop:main-sums-along-multiples}.]
    We will prove the stronger statement that if $B$ is a sufficiently large absolute constant, then for any integers $k\geq 0$ and $c\geq 1$ with $p\nmid c$, and real number $z>0$, 
    \begin{equation}\label{eq:stronger-result-1}
        \max\left(S_{cp^k}(z), R^{p, 2}_{c p^k}(z) \right)\leq B^{k+1}\Bigg( \sum_{\substack{a, b\geq 0 \\ a+2b \leq k}} \abs{\lambda_1(p)}^{2a} \abs{\lambda_2(p)}^{2b} \Bigg) S_c\left(\frac{z}{p^{2k}}\right)
    \end{equation}
    and
    \begin{equation}\label{eq:stronger-result-2}
        R^{p, 1}_{c p^k}(z) \leq B^{k+1}\Bigg( \sum_{\substack{a, b\geq 0 \\ a+2b \leq k+1}} \abs{\lambda_1(p)}^{2a} \abs{\lambda_2(p)}^{2b} \Bigg) S_c\left(\frac{z}{p^{2k}}\right).
    \end{equation}
    
    The proof is by induction on $k$. For the base cases $k=0$ and $k=1$, first note that \cref{lemma:R_to_S} implies
    \begin{align}
        R^{p, 1}_{c}(z) &\ll S_c(z), \label{eq:R_1_base} \\
        R^{p, 2}_{c}(z)&\ll S_c\left(z/p^2\right), \label{eq:R_2_base} \\
        R^{p, 2}_{cp}(z)&\ll S_c\left(z/p^2\right). \nonumber
    \end{align}
    From \eqref{eq:R_2_base} and \cref{lemma:R_to_R} we also obtain
    \begin{equation*}
        R^{p, 1}_{cp}(z) \ll (\abs{\lambda_2(p)}^2+1) \cdot S_c\left(z/p^2\right).
    \end{equation*}
    Furthermore, by \cref{lemma:S_to_R} and \eqref{eq:R_1_base} we have
    \begin{equation*}
        S_{cp}(z) \ll \abs{\lambda_1(p)}^2\cdot S_c\left(\frac{z}{p^2}\right) + S_c\left(\frac{z}{p^4}\right) + S_c\left(\frac{z}{p^2}\right).
    \end{equation*}
    Combining those five inequalities, we see that \eqref{eq:stronger-result-1} and \eqref{eq:stronger-result-2} hold for $k=0$ and $k=1$, as long as $B$ is a sufficiently large absolute constant. 
    
    The induction step for $k\geq 2$ follows directly from \cref{lemma:S_to_R} and \cref{lemma:R_to_R}, as long as $B$ is large enough in terms of the implied constants in that lemma (which are absolute), concluding the proof.
    
\end{proof}


\section{Proof of \texorpdfstring{\cref{thm:main}}{}: auxiliary results}

The proof of \cref{thm:main} requires the following three lemmas, which we derive from the results of \cref{sec:sums-along}.

\begin{lemma}\label{lemma:large_eigenvalues}
    For any prime $p > 2$, real number $z>0$, and $\ell\in\{1,2\}$,
    \begin{equation*}
        \abs{\lambda_\ell(p)}^2 \cdot S\left(z\right) \ll S\left(zp^2\right).
    \end{equation*}
\end{lemma}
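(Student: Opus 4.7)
The plan is to handle the two cases $\ell=1$ and $\ell=2$ separately, in each case applying the appropriate Hecke relation from \cref{section:arithmetic-properties}, squaring, and then controlling each resulting piece either trivially or via \cref{lemma:R_to_S}. Both reductions are essentially just packaging of material already developed in \cref{sec:sums-along}.

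For $\ell=1$, I would start from \cref{lemma:T1_relation} and rearrange to isolate $\lambda_1(p) A(\beta)$. Taking $|\cdot|^2$ and applying Cauchy--Schwarz to the three terms on the right yields
\begin{equation*}
    |\lambda_1(p)|^2 |A(\beta)|^2 \ll |A(p\beta)|^2 + |A(\beta/p)|^2 + \frac{1}{p}\Bigg|\sum_{i=1}^{p+1} A\!\left(\frac{\alpha_i'\beta\overline{\alpha_i}}{p}\right)\Bigg|^2.
\end{equation*}
Summing over $0\ne\beta\in V^3(\Z)$ with $N(\beta)\le z$, the first sum becomes $\sum_{N(\gamma)\le zp^2,\ p\mid\gamma} |A(\gamma)|^2 \le S(zp^2)$ after the change of variable $\gamma=p\beta$, the second gives $S(z/p^2)$, and the third is exactly $R^{p,1}_1(z)$, which by \cref{lemma:R_to_S} (with $c=1$, $k=0$, $\ell=1$) is $\ll S(z)$. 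All three terms are dominated by $S(zp^2)$ since $S$ is non-decreasing.

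For $\ell=2$, I would analogously start from \cref{lemma:T2_relation}, using the trivial bound $|\mathcal{E}(\beta,p)|\le 1$ (clear from each of the four cases in the definition of $\mathcal{E}$). Squaring and applying Cauchy--Schwarz gives
\begin{equation*}
    |\lambda_2(p)|^2 |A(\beta)|^2 \ll \frac{1}{p}\Bigg|\sum_i A(\alpha_i'\beta\overline{\alpha_i})\Bigg|^2 + \frac{1}{p}\Bigg|\sum_i A\!\left(\frac{\alpha_i'\beta\overline{\alpha_i}}{p^2}\right)\Bigg|^2 + |A(\beta)|^2.
\end{equation*}
Summing, the first sum equals $R^{p,0}_1(z)$, the second equals $R^{p,2}_1(z)$, and the third is $S(z)$. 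Applying \cref{lemma:R_to_S} with $c=1$, $k=0$, and $\ell=0$ respectively $\ell=2$ yields $R^{p,0}_1(z)\ll S(zp^2)$ (the relevant $\delta$ satisfy $N(\delta)\le zp^2$) and $R^{p,2}_1(z)\ll S(z/p^2)$. Again everything is $\le S(zp^2)$.

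There is no real obstacle here; the only point that requires a moment of care is verifying that \cref{lemma:R_to_S} applies when $\ell=0$ (no division by $p^\ell$ occurs, but the argument via $I(\beta)$ and \cref{lemma:quaternion_multiplicity} goes through unchanged), which gives the shift from $z$ to $zp^2$ in the one term that is not automatically small.
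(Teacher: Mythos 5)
Your proposal is correct and follows essentially the same route as the paper: isolate $\lambda_\ell(p)A(\beta)$ via \cref{lemma:T1_relation} or \cref{lemma:T2_relation}, apply Cauchy--Schwarz, and bound the resulting sums by $S(zp^2)$, $S(z/p^2)$, $S(z)$, and $R^{p,1}_1(z)$, $R^{p,0}_1(z)$, $R^{p,2}_1(z)$ via \cref{lemma:R_to_S}. Your remark about the $\ell=0$ application of \cref{lemma:R_to_S} is well taken but poses no issue, since that lemma is stated (and proved) for all $\ell\geq 0$, and the paper uses exactly the same instance $R^{p,0}_1(z)\ll S(zp^2)$.
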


\begin{proof}
    The case $\ell = 1$ follows by applying  \cref{lemma:T1_relation}, Cauchy--Schwarz, and \cref{lemma:R_to_S} to get
    \begin{align*}
        \abs{\lambda_1(p)}^2 \cdot S\left(z\right) \ll S\left(zp^2\right) +S(z/p^2) +  R^{p, 1}_1(z)\ll
        S\left(zp^2\right) +S(z/p^2) +S(z)\ll S(zp^2).
    \end{align*}
    The case $\ell = 2$ follows by applying \cref{lemma:T2_relation}, Cauchy--Schwarz, and \cref{lemma:R_to_S} to get 
    \begin{align*}
        \abs{\lambda_2(p)}^2 \cdot S\left(z\right) 
        \ll R^{p, 0}_1(z) + R^{p, 2}_1(z) + S\left(z\right)\ll S(zp^2)+S(z/p^2)+S(z)\ll S(zp^2).
    \end{align*}

\end{proof}

For the rest of this section, consider a real parameter $P\geq 1$, and an arbitrary subset 
\begin{equation*}
    \P \subset \left\{p \in \left[\frac{P}{2}, P\right] : p \text{ is an odd (rational) prime}\right \}.
\end{equation*}
Following \cite{Sou10}, to bound the sums $S(z)$, we will distinguish between $A(\beta)$ with $\beta$ divisible by ``many" elements of $\P$ and those with $\beta$ divisible by ``not as many" elements of $\P$. To make this distinction precise we define the following subsets of $V^3(\Z)$, analogous to the sets $\cN_j(k)$ in \cite[Section 3]{Sou10}. For $K>0$ and a positive integer $\ell$, set
\begin{equation}\label{def:multiplicity}
    \M_\ell(K) := \left\{0 \not= \beta\in V^3(\Z) : \text{there are}\leq K \text{ primes } p \in \P \text{ such that } p^\ell\mid \beta\right\}.
\end{equation}

\begin{lemma}\label{lemma:sum_of_conjs_mult_p_bound}
	For any real numbers $z>0$ and $K\geq 1$,
	\begin{equation}\label{eq:sum_of_conj1}
		\sum_{\substack{\beta \in \M_1(K) \\ N(\beta) \leq z }}\sum_{\substack{p\in \P \\ p\nmid \beta}} \left|  \frac{1}{\sqrt{p}}\sum_{i=1}^{p+1} A\left(\frac{\alpha_i' \beta \overline{\alpha_i}}{p}\right)\right|^2 \ll K \cdot  S\left(z\right)
	\end{equation}
	and 
	\begin{equation}\label{eq:sum_of_conj2}
		\sum_{\substack{\beta \in \M_1(K) \\ N(\beta) \leq z }}\sum_{\substack{p\in \P \\ p\nmid \beta}} \left|  \frac{1}{\sqrt{p}}\sum_{i=1}^{p+1} A\left(\frac{\alpha_i' \beta \overline{\alpha_i}}{p^2}\right)\right|^2 \ll |\P| \cdot  S\left(\frac{z}{(P/2)^2}\right).
	\end{equation}
\end{lemma}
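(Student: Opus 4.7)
Both inequalities proceed by Cauchy--Schwarz, a change of variables, and multiplicity counting. For $p \nmid \beta$, \cref{lemma:v_p_of_conjugate} gives $v_p(\alpha_i'\beta\overline{\alpha_i}) \in \{0,1,2\}$ with at most two indices $i$ giving a value $\geq 1$. Hence for $\ell \in \{1,2\}$, $A(\alpha_i'\beta\overline{\alpha_i}/p^\ell)$ vanishes outside the set $I^{(\ell)}(\beta) := \{i : v_p(\alpha_i'\beta\overline{\alpha_i}) \geq \ell\}$, which has size at most $2$. Applying Cauchy--Schwarz on this set, then changing variables to $\delta := \alpha_i'\beta\overline{\alpha_i}/p^\ell$ (so that $\beta = \alpha_i^*\delta\alpha_i \cdot p^{\ell-2}$) and swapping the order of summation reduces each LHS to $\sum_\delta |A(\delta)|^2 W(\delta)$ for an appropriate weight $W(\delta)$ counting valid triples $(p,i,\beta)$ with weight $1/p$.

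For \eqref{eq:sum_of_conj2}, the substitution yields $N(\delta) = N(\beta)/p^2 \leq z/(P/2)^2$. Using only the trivial per-prime multiplicity bound $\leq p+1$ gives $W(\delta) \ll \sum_{p \in \P}(p+1)/p \ll |\P|$, immediately establishing the second estimate.

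For \eqref{eq:sum_of_conj1}, I need sharper control, which is where $\beta \in \M_1(K)$ enters. Since $v_p(\beta) = 0$ and $\alpha_i'\beta\overline{\alpha_i} = p\delta$, \cref{lemma:v_p_of_conjugate} gives $v_p(\delta) \in \{0,1\}$. When $v_p(\delta) = 1$, up to $O(p)$ values of $i$ may be valid (per-prime contribution $O(1)$), but the second item of \cref{lemma:v_p_of_conjugate} ensures the primes $q \neq p$ dividing $\beta$ coincide with those dividing $\delta$; hence $\beta \in \M_1(K)$ forces $\delta$ to be divisible by at most $K+1$ primes in $\P$, and the sum over $p$ is further restricted to the primes in $\P$ dividing $\delta$, contributing $O(K)$ in total. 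When $v_p(\delta) = 0$, I claim the per-prime multiplicity is $O(1)$, in which case the contribution is $O(\sum_{p \in \P} 1/p) = O(1)$ by the prime number theorem (as $\P \subseteq [P/2,P]$). Combining yields $W(\delta) \ll K$, and summing over $\delta$ with $N(\delta) \leq z$ gives the first inequality.

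The main obstacle is the $O(1)$ multiplicity claim in the case $v_p(\delta) = 0$, which amounts to bounding the number of $i \in \{1,\dots,p+1\}$ with $v_p(\alpha_i^*\delta\alpha_i) \geq 1$ when $v_p(\delta) = 0$. This is a ``dual'' version of \cref{lemma:v_p_of_conjugate}(iii) for the action $\delta \mapsto \alpha_i^*\delta\alpha_i$, which I expect to follow from a symmetric adaptation of Pitale's argument (noting that $\alpha_i^*\delta\alpha_i = \overline{\alpha_i}'\delta\overline{\overline{\alpha_i}}$, applied to the right-orbit representatives $\{\overline{\alpha_i}\}$), or alternatively by invoking \cref{lemma:quaternion_multiplicity} to handle the sub-case $v_p(\alpha_i^*\delta\alpha_i) \geq 2$ together with a separate bound for $v_p = 1$.
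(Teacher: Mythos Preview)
Your approach is essentially identical to the paper's: both apply Cauchy--Schwarz to the at-most-two nonzero terms in the inner sum, change variables to $\delta = \alpha_i'\beta\overline{\alpha_i}/p^\ell$, and bound the resulting multiplicity. Your treatment of \eqref{eq:sum_of_conj2} via the trivial per-prime bound $\leq p+1$ matches the paper's exactly.

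For \eqref{eq:sum_of_conj1}, your split on $v_p(\delta) \in \{0,1\}$ is precisely the paper's dichotomy ``either at most $16$ pairs, or $p \mid \delta$''. The gap you flag --- bounding $\#\{i : p \mid \alpha_i^*\delta\alpha_i\}$ when $p \nmid \delta$ --- is closed not by a dual version of \cref{lemma:v_p_of_conjugate}, but by applying \cref{lemma:quaternion_multiplicity} to $p\delta$ rather than to $\delta$. Indeed, $p \mid \alpha_i^*\delta\alpha_i$ is equivalent to $p^2 \mid \alpha_i^*(p\delta)\alpha_i$, so if more than $16$ indices $i$ satisfy this, \cref{lemma:quaternion_multiplicity} forces $p^2 \mid p\delta$, i.e.\ $p \mid \delta$, a contradiction. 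This is exactly what the paper does, and it renders your second suggested route (handling $v_p \geq 2$ via \cref{lemma:quaternion_multiplicity} plus a separate argument for $v_p = 1$) unnecessary.
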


\begin{proof}
	By \cref{lemma:v_p_of_conjugate}, the inner sum in both \eqref{eq:sum_of_conj1} and \eqref{eq:sum_of_conj2} has at most two non-zero terms. Thus by Cauchy--Schwarz \eqref{eq:sum_of_conj1} and \eqref{eq:sum_of_conj2} are 
\begin{align}\label{eq:cauchy-inner}
    &\leq  \sum_{\substack{\beta \in \M_1(K) \\ N(\beta) \leq z }}\sum_{\substack{p\in \P \\ p\nmid \beta}} \frac{2}{p} \sum_{i=1}^{p+1} \left|A\left(\frac{\alpha_i' \beta \overline{\alpha_i}}{p^\ell}\right)\right|^2,
\end{align}
where $\ell=1,2$ respectively. \cref{lemma:v_p_of_conjugate} also yields that each of the elements $\frac{\alpha_i'\beta\overline{\alpha_i}}{p^\ell}$ appearing in \eqref{eq:sum_of_conj1} and \eqref{eq:sum_of_conj2} belongs to $\M_1(K+1)$, so the quantity in \eqref{eq:cauchy-inner} is 
\begin{equation}\label{eq:cauchy-outer}
    \ll \frac{1}{P} \sum_{\substack{\delta \in \M_1(K+1) \\ N(\delta) \leq z/(P/2)^{2(\ell-1)}}} m(\delta, \ell) \cdot |A(\delta)|^2,
\end{equation}
where 
\begin{equation*}
    m(\delta, \ell) = \# \left\{ (\beta, p, i) : \beta \in V^3(\Z), p\in \P, 1\leq i\leq p+1 \text{ satisfy } p \nmid \beta \text{ and }\frac{\alpha_i' \beta \overline{\alpha_i}}{p^\ell} = \delta \right\}. 
\end{equation*}

We now treat the cases $\ell=1$ and $\ell=2$ separately, starting with the former one. 
Fix $0\not= \delta \in V^3(\Z)$. Note that there are at most $p+1\leq P+1$ pairs $(\beta, i)$ with $\frac{\alpha_i' \beta \overline{\alpha_i}}{p} = \delta$, since $p$ and $i$ determine $\beta$. For each choice of $p \in \P$, either there are in fact at most $16$ pairs $(\beta, i)$ with $\frac{\alpha_i' \beta \overline{\alpha_i}}{p} = \delta$, or \cref{lemma:quaternion_multiplicity} implies that $p \mid \delta$, since $p^2 \mid \alpha_i^* (p\delta) \alpha_i$ for each of these pairs. Thus if $\delta\in\M_1(L)$ then 
	\begin{equation*}
    	m(\delta,1) \leq (P+1)L + 16(|\P|-L).
	\end{equation*}
	Since $K\geq 1$ this leads to
	\begin{align*}
    		\frac{1}{P} \sum_{\substack{\delta \in \M_1(K+1) \\ N(\delta) \leq z}} m(\delta,1) \cdot |A(\delta)|^2 \ll \frac{KP+|\P|}{P} \sum_{\substack{\delta \in \M_1(K+1) \\ N(\delta) \leq z }}|A(\delta)|^2 \ll K  \cdot S\left(z\right),
	\end{align*}
	concluding the proof of \eqref{eq:sum_of_conj1}.
        For the case $\ell=2$ we use the obvious bound
        \begin{equation*}
		m(\delta,2) \leq (P+1) \cdot |\P|,
        \end{equation*}
        which (since $P\geq 1$) yields
        \begin{equation*}
		\frac{1}{P} \sum_{\substack{\delta \in \M_1(K+1) \\ N(\delta) \leq z/(P/2)^2}} m(\delta,2) \cdot |A(\delta)|^2 \ll \frac{P|\P|}{P} \sum_{\substack{\delta \in \M_1(K+1) \\ N(\delta) \leq z/(P/2)^2 }}|A(\delta)|^2 \ll |\P|  \cdot S\left(\frac{z}{(P/2)^2}\right).
        \end{equation*}
    
\end{proof}

Denote
\begin{equation}\label{eq:before-lemma7}
    S_\ell^\flat(z,K) := \sum_{\substack{\beta \notin \M_\ell(K) \\ N(\beta) \leq z }}|A(\beta)|^2.
\end{equation}

\begin{lemma}\label{lemma:flat_sum_bounds}
    There exists an absolute constant $B$ such that the following holds. For any integers $\ell, K\geq 1$ and real number $z>0$ we have
    \begin{equation*}
        S_\ell^\flat(z,K) \leq \left(\frac{B^\ell \cdot |\P| \cdot \sup_{p\in\P}\mathcal{L}_\ell(p)}{K+1}\right)^{K+1} S\left(\frac{z}{(P/2)^{2\ell (K+1)}}\right),
    \end{equation*}
    where
    \begin{equation*}
        \mathcal{L}_\ell(p) := \sum_{\substack{a, b\geq 0 \\ a+2b \leq \ell}} \abs{\lambda_1(p)}^{2a} \abs{\lambda_2(p)}^{2b}.
    \end{equation*}
\end{lemma}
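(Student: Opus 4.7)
The plan is to bound $S_\ell^\flat(z,K)$ by a union bound over $(K+1)$-element subsets of $\P$, and then apply \cref{cor:general_divisor} prime-by-prime.

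First, observe that $\beta \notin \M_\ell(K)$ means there are at least $K+1$ distinct primes $p \in \P$ such that $p^\ell \mid \beta$. Therefore, for every such $\beta$ with $N(\beta) \leq z$, there exists at least one $(K+1)$-element subset $\{p_1, \ldots, p_{K+1}\} \subset \P$ (consisting of primes $p_i$ with $p_i^\ell \mid \beta$) such that $(p_1 \cdots p_{K+1})^\ell \mid \beta$. Summing crudely over all such subsets gives
\begin{equation*}
S_\ell^\flat(z, K) \leq \sum_{\substack{S \subset \P \\ |S| = K+1}} S_{\prod_{p \in S} p^\ell}(z).
\end{equation*}

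Next, for each fixed subset $S = \{p_1, \ldots, p_{K+1}\}$, set $d = \prod_{p \in S} p^\ell$, which is odd. Applying \cref{cor:general_divisor}, since $v_p(d) = \ell$ for each $p \in S$ and $v_p(d) = 0$ otherwise,
\begin{equation*}
S_d(z) \leq \prod_{p \in S} \bigl( A^\ell \mathcal{L}_\ell(p) \bigr) \cdot S\!\left(\frac{z}{d^2}\right) \leq \bigl(A^\ell \cdot \sup_{p \in \P} \mathcal{L}_\ell(p)\bigr)^{K+1} \cdot S\!\left(\frac{z}{(P/2)^{2\ell(K+1)}}\right),
\end{equation*}
where we used that each $p \in S$ satisfies $p \geq P/2$, so that $d^2 \geq (P/2)^{2\ell(K+1)}$, together with the monotonicity of $S$.

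Combining these two bounds and using $\binom{|\P|}{K+1} \leq \frac{|\P|^{K+1}}{(K+1)!} \leq \left(\frac{e\,|\P|}{K+1}\right)^{K+1}$ (by Stirling), we obtain
\begin{equation*}
S_\ell^\flat(z, K) \leq \binom{|\P|}{K+1} \bigl(A^\ell \cdot \sup_{p \in \P} \mathcal{L}_\ell(p)\bigr)^{K+1} S\!\left(\frac{z}{(P/2)^{2\ell(K+1)}}\right) \leq \left(\frac{(eA)^\ell |\P| \sup_{p \in \P} \mathcal{L}_\ell(p)}{K+1}\right)^{K+1} S\!\left(\frac{z}{(P/2)^{2\ell(K+1)}}\right),
\end{equation*}
using that $\ell \geq 1$ to absorb the factor $e$ into $(eA)^\ell$. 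Choosing $B := eA$ (with $A$ the absolute constant from \cref{cor:general_divisor}) completes the proof. There is no real obstacle here beyond carefully setting up the union bound; the main content has already been done in \cref{sec:sums-along} via the recursive argument that yielded \cref{cor:general_divisor}.
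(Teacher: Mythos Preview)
Your proof is correct and follows essentially the same approach as the paper: the same union bound over $(K+1)$-element subsets of $\P$, the same application of \cref{cor:general_divisor} to each resulting $d$, and the same Stirling-type bound on $\binom{|\P|}{K+1}$, concluding with $B = eA$.
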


\begin{proof}
    Since every element $\beta$ which appears in $S_\ell^\flat(z, K)$ is a multiple of at least $K+1$ distinct $\ell$-th powers of primes in $\P$,
    \begin{align*}
        S_\ell^\flat(z, K)& \leq \sum_{\substack{d = (p_1 p_2 \cdots p_{K+1})^\ell \\ p_1 < p_2 < \dots < p_{K+1} \\
            p_i \in \P}}  \sum_{\substack{0 \not= \beta \in V^3(\Z) \\ N(\beta) \leq z \\ d \mid \beta}} |A(\beta)|^2 =  \sum_{\substack{d = (p_1 p_2 \cdots p_{K+1})^\ell \\ p_1 < p_2 < \dots < p_{K+1} \\
            p_i \in \P}}  S_{d}(z). 
    \end{align*}
    Observe that $d \geq (P/2)^{\ell (K+1)}$. Applying \cref{cor:general_divisor} gives, for some absolute constant $A>0$,
    \begin{align*}
        S_\ell^\flat(z, K) &\leq \binom{|\P|}{K+1} A^{\ell(K+1)}\left(\sup_{p\in\P}\mathcal{L}_\ell(p)\right)^{K+1} S\left(\frac{z}{(P/2)^{2\ell (K+1)}}\right) \\
        &\leq \left(\frac{e \cdot |\P| \cdot  A^\ell\cdot \sup_{p\in\P}\mathcal{L}_\ell(p)}{K+1}\right)^{K+1} S\left(\frac{z}{(P/2)^{2\ell (K+1)}}\right).
    \end{align*}

\end{proof}


\section{Concluding the proof of \texorpdfstring{\cref{thm:main}}{}}

We are finally ready to prove \cref{thm:main}. Recall that the partial sums $S$ were defined using an eigenfunction $\phi$. The content of \cref{thm:main} is that the functions $y\mapsto S(x/y)/S(x)$ decay uniformly in $\phi$. The following general result provides a uniform rate of decay for any collection of compactly supported functions $f:[1,\infty)\to [0, 1]$ satisfying a single recursive inequality of a certain kind. We will prove \cref{thm:main} by showing that our functions $f(y) = \frac{S(x/y)}{S(x)}$ satisfy such a recursive inequality.
\begin{lemma}\label{lemma:recursion_asymptotics}
    Let $f:[1, \infty) \to [0, 1]$ be a compactly supported function with $f(1) = 1$. Consider real numbers $\Delta > 0, \eps \in (0, 1)$, and $A \geq 10$, integers $M, N \geq 0$, and functions $a_m, b_n : [1, \infty) \to \R$, for each $1\leq m\leq M$ and $1\leq n \leq N$, satisfying 
    \begin{align*}
        1 \geq a_m(y) \geq \eps \quad \quad \text{ and } \quad \quad  b_n(y) \geq \eps (1+\log{y})^{\eps}.
    \end{align*}
    Assume that for every $y\geq A$ we have
    \begin{equation*}
        f(y) \leq A \left[ \frac{(\log{y})^A}{y^\Delta} + f\left(y^{1+\eps}\right) + \sum_{m=1}^M y^{-\Delta a_m(y)} \cdot f\left(y^{1- a_m(y)}\right) + \sum_{n=1}^N e^{-\eps b_n(y)} y^{\Delta b_n(y)}\cdot f\left(y^{1+ b_n(y)}\right) \right].
    \end{equation*}
    Then there exist parameters $C$ and $R$ which depend only on $\{A, M, N, \Delta,\eps\}$ such that
    \begin{equation*}
        f(y) \leq C \cdot \frac{(1+\log{y})^R}{y^\Delta}
    \end{equation*}
    for every $y \geq 1$. In particular, if $A, M, N, \Delta,\eps$ are all absolute constants, then so are $C$ and $R$.
\end{lemma}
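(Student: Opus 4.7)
The strategy is to reparametrize so that the target becomes a uniform sup-norm bound, and then bootstrap using the recursive hypothesis. Set $F(y) := y^\Delta f(y)$; since $(y^{1\pm s})^\Delta f(y^{1\pm s}) = F(y^{1\pm s})$, multiplying the hypothesis through by $y^\Delta$ gives, for $y \geq A$,
\begin{equation*}
    F(y) \leq A\Bigg[(\log y)^A + y^{-\Delta\eps}F(y^{1+\eps}) + \sum_{m=1}^M F(y^{1-a_m(y)}) + \sum_{n=1}^N e^{-\eps b_n(y)}F(y^{1+b_n(y)})\Bigg].
\end{equation*}
Fix $R \geq A$ (to be chosen) and define $G := \sup_{y \geq 1} F(y)/(1+\log y)^R$, which is a priori finite because $f$ is compactly supported and $[0,1]$-valued (so $F(y) \leq y^\Delta$ on a bounded set and $F \equiv 0$ outside it). The goal is to show $G$ is bounded by a quantity depending only on $\{A, M, N, \Delta, \eps\}$.

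\textbf{Two elementary size inequalities.} For every $y \geq 1$ and $s \geq 0$ one has $1+(1+s)\log y \leq (1+s)(1+\log y)$; and for $y \geq e$, using $\log y/(1+\log y) \geq 1/2$ together with $a_m(y) \geq \eps$, one has $1+(1-a_m(y))\log y \leq (1-\eps/2)(1+\log y)$. Substituting $F \leq G(1+\log\cdot)^R$ into the recursion and dividing by $(1+\log y)^R$ yields, for $y \geq A$ (so in particular $y \geq e$ since $A \geq 10$),
\begin{equation*}
    \frac{F(y)}{(1+\log y)^R} \leq A + A y^{-\Delta\eps}(1+\eps)^R G + AM(1-\eps/2)^R G + A\sum_{n=1}^N e^{-\eps b_n(y)}(1+b_n(y))^R G,
\end{equation*}
where the first term uses $R \geq A$ to absorb $(\log y)^A$.

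\textbf{Bootstrap.} First choose $R$ large enough that $AM(1-\eps/2)^R \leq 1/6$, which depends only on $A, M, \eps$. Next choose $y_1 \geq A$ so that both $A y_1^{-\Delta\eps}(1+\eps)^R \leq 1/6$ and $A N \, e^{-\eps b_n(y)}(1+b_n(y))^R \leq 1/6$ hold for all $y \geq y_1$ and every $n$. The first is immediate. The second is the main obstacle, since $F$ is evaluated at the very large argument $y^{1+b_n(y)}$; but the map $t \mapsto e^{-\eps t}(1+t)^R$ is eventually decreasing and tends to $0$, and the lower bound $b_n(y) \geq \eps(1+\log y)^\eps \to \infty$ forces
\begin{equation*}
    e^{-\eps b_n(y)}(1+b_n(y))^R \leq e^{-\eps^{2}(1+\log y)^\eps}\bigl(1+\eps(1+\log y)^\eps\bigr)^R \longrightarrow 0
\end{equation*}
as $y \to \infty$, which yields a valid $y_1$ depending only on $R, A, N, \Delta, \eps$. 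With these choices, $F(y)/(1+\log y)^R \leq A + G/2$ for $y \geq y_1$, while for $1 \leq y \leq y_1$ the trivial bound $F(y) \leq y_1^\Delta$ gives $F(y)/(1+\log y)^R \leq y_1^\Delta$. Taking the supremum, $G \leq \max(y_1^\Delta,\, A + G/2)$, and solving forces $G \leq \max(y_1^\Delta, 2A) =: C$, a constant depending only on the parameters. Unwinding yields $f(y) \leq C(1+\log y)^R/y^\Delta$ for all $y \geq 1$, as required.
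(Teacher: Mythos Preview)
Your proof is correct and follows essentially the same approach as the paper: set $F(y)=y^\Delta f(y)$, rewrite the recursion for $F$, and control the quantity $\sup_{y\ge 1} F(y)/(1+\log y)^R$ after choosing $R$ large in terms of $A,M,\eps$. The only organizational difference is that the paper picks a near-maximizer $z_R$ (with $F(z_R)/(1+\log z_R)^R \ge \tfrac12\sup$) and applies the recursion at that single point to force $z_R$ to be bounded, whereas you bound $F(y)/(1+\log y)^R$ uniformly for $y\ge y_1$, use the trivial bound on $[1,y_1]$, and then solve the resulting inequality $G\le \max(y_1^\Delta, A+G/2)$ for $G$. Both arguments rest on the same four observations: $R\ge A$ kills the $(\log y)^A$ term, $R$ large makes $AM(1-\eps/2)^R$ small, the $y^{-\Delta\eps}(1+\eps)^R$ term decays, and the growth condition $b_n(y)\ge \eps(1+\log y)^\eps$ forces $e^{-\eps b_n}(1+b_n)^R\to 0$.
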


\begin{remark}
    \cref{lemma:recursion_asymptotics} holds under the weaker assumption that $b_m(y)\geq h(y)$ for an arbitrary function satisfying $h(y)\to\infty$ as $y\to\infty$, in which case the parameters $C$ and $R$ would also depend on $h$. We state the result as above for concreteness.
\end{remark}

\begin{proof}
    Define $g :[1, \infty) \to \R_{\geq 0}$ by $g(y) = y^\Delta \cdot f(y)$. Then for every $y \geq A$,
    \begin{equation}\label{eq:g_recursive_ineq}
        g(y) \leq A \left[(\log{y})^A \cdot g(1) + \frac{g\left(y^{1+\eps}\right)}{y^{\eps\Delta}} + \sum_{m=1}^M g\left(y^{1- a_m(y)}\right) + \sum_{n=1}^N e^{-\eps b_n(y)} \cdot g\left(y^{1+ b_n(y)}\right) \right].
    \end{equation}

    Since $f$ is compactly supported and bounded, for any real number $r\geq 0$ there exists a real number $z_r\geq 1$ such that
    \begin{equation*}
        \frac{g(z_r)}{(1+\log{z_r})^r} \geq \frac{1}{2}\cdot \sup_{y\geq 1} \frac{g(y)}{(1+\log{y})^r}.
    \end{equation*}
    Thus for any $y \geq 1$,
    \begin{equation}\label{eq:g_maximality_ineq}
        g(y) \leq 2\left(\frac{1+\log{y}}{1+\log{z_r}}\right)^r g(z_r).
    \end{equation}
    We have $g(z_r) > 0$, since $g(1)=1$.

    Let $R$ denote the smallest integer satisfying $R\geq A \geq 10$,
    \begin{equation*}
        2A(\log{A})^{A-R} \leq \frac{1}{4}, \qquad \text{ and } \qquad 2AM \left(1 - \frac{\eps}{2}\right)^R \leq \frac{1}{4},
    \end{equation*}
    so that $R \ll_{A, M, \eps} 1$.

    \subsection*{Case 1:} $z_R \geq \max\left(A, \left[8A(1+\eps)^R\right]^{\frac{1}{\eps\Delta}}\right)$.

    In this case we may apply \eqref{eq:g_recursive_ineq} followed by \eqref{eq:g_maximality_ineq} to obtain
    \begin{align*}
        g(z_R) &\leq 2A \left[\frac{(\log{z_R})^A}{(1+\log{z_R})^{R}} + \left(1 + \frac{ \eps\log{z_R}}{1+\log{z_R}}\right)^R \frac{1}{(z_R)^{\eps\Delta}} + \sum_{m=1}^M \left(1 - a_m(z_R) \cdot \frac{ \log{z_R}}{1+\log{z_R}}\right)^R\right. \\
        & \qquad \qquad \qquad \qquad \qquad \quad + \left. \sum_{n=1}^N e^{-\eps b_n(z_R)} \cdot \left(1 + b_n(z_R) \cdot \frac{ \log{z_R}}{1+\log{z_R}}\right)^R \right] g(z_R) \\
        & \leq 2A \left[(\log{A})^{A-R} + \frac{\left(1+\eps\right)^R}{8A\left(1+\eps\right)^R} + M \left(1 - \frac{\eps}{2}\right)^R + \sum_{n=1}^N e^{-\eps b_n(z_R)} \cdot \Big(1 + b_n(z_R) \Big)^R \right] g(z_R).
    \end{align*}
    Since $g(z_R) > 0$, this implies
    \begin{equation*}
        \frac{1}{4} \leq 2A\sum_{n=1}^N e^{-\eps b_n(z_R)} \cdot \Big(1 + b_n(z_R) \Big)^R.
    \end{equation*}
    
    There is a parameter $B>0$ depending only on $R$ and $\eps$, and therefore only on $\{A, M, \eps\}$, such that $e^{-\eps x} (1+x)^R \leq B e^{-\frac{\eps}{2}x}$ for every $x\geq 0$. Therefore, 
    \begin{equation*}
        \frac{1}{4} \leq 2AB \sum_{n=1}^N e^{-\frac{\eps}{2}b_n(z_R)} \leq 2ABN e^{-\frac{\eps^2}{2}(1+\log{z_R})^{\eps}}.
    \end{equation*}

    This shows that $z_R \ll_{A, M, N, \eps} 1$. Since $f$ is bounded by $1$, condition \eqref{eq:g_maximality_ineq} for $r=R$ shows that for any $y \geq 1$ we have
    \begin{equation}\label{eq:asymp_ine_last_step}
        f(y) \leq 2 \left(1+\log{y}\right)^R \left(\frac{z_R}{y}\right)^\Delta f(z_R) \leq 2 \frac{\left(1+\log{y}\right)^R}{y^\Delta} (z_R)^\Delta \ll_{A, M, N, \Delta, \eps} \frac{\left(1+\log{y}\right)^R}{y^\Delta},
    \end{equation}
    as desired.

    \subsection*{Case 2:} $z_R < \max\left(A, \left[8A(1+\eps)^R\right]^{\frac{1}{\eps\Delta}}\right)$.

    In this case, since $R \ll_{A, M, \eps} 1$ we have $z_R \ll_{A, M, \Delta, \eps} 1$, so arguing as in \eqref{eq:asymp_ine_last_step} we conclude that
    \begin{equation*}
        f(y) \ll_{A, M, \Delta, \eps} \frac{\left(1+\log{y}\right)^R}{y^\Delta}
    \end{equation*}
    and the result follows.
    
\end{proof}

\begin{proposition}\label{prop:main_technical_inequality}
    There exists an absolute constant $D>0$ such that the following holds. Let $x\geq 1$, $y\geq 10^{10^{10}}$, and denote $\nu := \frac{1}{8}$ and $P := y^\nu$. There exist positive integer parameters $K_\ell$ satisfying
    \begin{equation*}
        K_\ell \geq \log{P}
    \end{equation*}
    for each $\ell \in \{1, 2, 3, 4\}$ such that
    \begin{align*}
        S\left(\frac{x}{y}\right) \leq D &\left[ \frac{(\log{P})^5}{P}\cdot S(x) + S\left(\frac{x}{y(P/2)^2}\right) + \sum_{\ell=1}^3 P^{-2\ell\nu} \cdot S\left(\frac{xP^{2\ell}}{y}\right) \right. \\
         &\qquad \qquad \qquad \left. + \sum_{\ell=1}^4 e^{-(K_\ell+1)} \left(\frac{P}{2}\right)^{2\ell \nu (K_\ell+1)} \cdot S\left(\frac{x}{y(P/2)^{2\ell(K_\ell+1)}}\right) \right].
    \end{align*}
\end{proposition}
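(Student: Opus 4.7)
The proof follows the strategy outlined in the introduction. Let $\P_0 \subset [P/2, P]$ denote the set of odd primes, so $|\P_0| \asymp P/\log P$ by the prime number theorem. I split
\[
S(x/y) = S^\sharp + S^\flat,
\]
where $S^\sharp$ ranges over $\beta \in \bigcap_{\ell=1}^{4} \M_\ell(K_\ell)$ and $S^\flat$ is its complement. For each $\ell \in \{1, 2, 3, 4\}$ I will choose $K_\ell$ to be the smallest positive integer at least $\log P$ for which
\[
e \cdot B^\ell \cdot |\P_0| \cdot \sup_{p \in \P_0} \mathcal{L}_\ell(p) \leq (K_\ell + 1) \cdot (P/2)^{2\ell\nu},
\]
where $\mathcal{L}_\ell$ and $B$ are as in \cref{lemma:flat_sum_bounds}. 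A direct application of that lemma to each of the four quantities $S_\ell^\flat(x/y, K_\ell)$ then gives the last sum in the target inequality as an upper bound for $S^\flat$, and the requirement $K_\ell \geq \log P$ is enforced by construction.

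The main part of the argument is the amplification of $S^\sharp$. By \eqref{eq:Hecke_relation}, each odd prime $p$ satisfies $\max(|\lambda_1(p)|^2, |\lambda_2(p)|, |\lambda_3(p)|) \gg 1$. Pigeonholing over $\ell_0 \in \{1, 2, 3\}$ and over $O(\log P)$ dyadic scales for $|\lambda_{\ell_0}(p)|^2 \asymp L \gg 1$ (which are polynomial by trivial Hecke bounds), I find $\P \subset \P_0$ with $|\P| \gg P/(\log P)^C$. Since at most $K_1 \ll |\P|/2$ primes from $\P$ can divide any given $\beta \in \M_1(K_1)$, the amplification reads
\[
L \cdot |\P| \cdot S^\sharp \ll \sum_{\beta \in \bigcap \M_\ell(K_\ell)} \sum_{\substack{p \in \P \\ p \nmid \beta}} |\lambda_{\ell_0}(p) A(\beta)|^2.
\]
For $\ell_0 = 1$ I apply \cref{lemma:T1_relation}, producing shifts $A(p\beta)$ and a conjugate sum. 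For $\ell_0 = 2$ I further split by the size of $L$: if $L$ exceeds a large absolute constant I use \eqref{eq:combinatorial_relation_1}, giving shifts $A(p^j \beta)$ for $j \leq 2$ together with an $O(1)$ coefficient of $A(\beta)$ that becomes $O(1/L)$ after dividing by $L$; otherwise I use \eqref{eq:combinatorial_relation_2}, giving shifts up to $j = 4$. For $\ell_0 = 3$ I use \cref{lemma:T3_relation}, producing $A(p^2\beta)$ shifts and doubly-conjugate terms.

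Squaring, summing, and using the multiplicity argument underlying \cref{prop:main-sums-along-multiples}, each shift-$p^j$ term satisfies $\sum_{p, \beta} |A(p^j \beta)|^2 \leq (K_j + 1) S(xP^{2j}/y)$. Dividing by $L|\P|$ and using our choice of $K_j$, this contributes exactly $P^{-2j\nu} S(xP^{2j}/y)$ for $j \in \{1, 2, 3\}$, which recovers the main-term sum. For $j = 4$, which only arises from \eqref{eq:combinatorial_relation_2} in the small-$L$ regime of $\ell_0 = 2$, the identity $P^8 = y$ gives $S(xP^8/y) = S(x)$, producing the $(\log P)^5/P \cdot S(x)$ term once the accumulated $\log P$ losses (from $|\P_0|$ and from the dyadic pigeonholing) are taken into account. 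The conjugate sums are controlled by \cref{lemma:sum_of_conjs_mult_p_bound}: inequality \eqref{eq:sum_of_conj2} applied to the $p^{-2}$ conjugates produces the $S(x/(y(P/2)^2))$ term, while \eqref{eq:sum_of_conj1} applied to the $p^{-1}$ conjugates, together with stray $|A(\beta)|^2$ contributions, yields a remainder of shape $O(P^{-2\nu}) S(x/y)$ which is absorbed into the LHS.

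The main technical obstacle is the $\ell_0 = 2$ case with $L \ll 1$: as emphasized in the introduction, a direct Cauchy--Schwarz on the $T_2$ relation in this regime would lose a factor of $|\P|$, because a typical large $\delta$ arises as $\alpha_p' \beta \overline{\alpha_p}$ for one $\alpha_p$ per prime $p \in \P$. Trading conjugation for powers of $p$ via \eqref{eq:combinatorial_relation_2} sidesteps this issue, at the cost of introducing $A(p^4\beta)$ shifts, which is precisely what forces the control of $\M_4(K_4)$ and accounts for the presence of $S(x)$ on the right-hand side. A secondary point is the handling of the doubly-conjugate terms from \cref{lemma:T3_relation} in the $\ell_0 = 3$ case: the indicator $\1_p(\alpha_i'\beta\overline{\alpha_i})$ combined with \cref{lemma:quaternion_multiplicity} ensures only absolute constant multiplicities, so these terms do not introduce any additional loss.
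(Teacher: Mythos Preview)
Your overall strategy matches the paper's, but the plan has two interlocking gaps that would cause it to fail as written.

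\textbf{First gap: the order of choosing $K_\ell$ and pigeonholing.} You fix $K_\ell$ via $\sup_{p\in\P_0}\mathcal{L}_\ell(p)$ \emph{before} pigeonholing. But the amplification step requires $(K_j+1)/(L|\P|)\ll P^{-2j\nu}$, i.e.\ $K_j+1\ll L|\P|P^{-2j\nu}$. With your choice, $K_j+1\asymp |\P_0|\sup_{\P_0}\mathcal{L}_j/P^{2j\nu}$, so you would need $\sup_{\P_0}\mathcal{L}_j\ll L\cdot|\P|/|\P_0|$. There is no reason for this: a single prime in $\P_0\setminus\P$ with large $|\lambda_1|$ or $|\lambda_2|$ makes $\sup_{\P_0}\mathcal{L}_j$ enormous while $L$ may be $\asymp 1$. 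The paper avoids this by pigeonholing \emph{first} and then choosing each $K_\ell$ in terms of $L$ and $|\P|$ (with $\M_\ell$ defined using $\P$, not $\P_0$), so that the two constraints are automatically compatible.

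\textbf{Second gap: you pigeonhole over only one eigenvalue.} In the $\ell_0=2$ and $\ell_0=3$ cases, the combinatorial identities you invoke carry coefficients $\lambda_1(p)$ (and $\lambda_2(p)$ in Subcase~2.2 and Case~3). For instance, \eqref{eq:combinatorial_relation_1} produces $\lambda_1(p)A(p\beta)$; after squaring and summing this contributes $\sup_{\P}|\lambda_1|^2\cdot(K_1+1)\,S(xP^2/y)$, which is not $\ll L|\P|P^{-2\nu}S(xP^2/y)$ unless $|\lambda_1(p)|^2$ is small on $\P$. Likewise, the doubly-conjugate term in \cref{lemma:T3_relation} is eventually unwound using $\lambda_1(p)A(p\gamma)$, and this again needs $|\lambda_1(p)|^2\ll 1$. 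The paper secures this by pigeonholing over \emph{all three} dyadic scales simultaneously (the sets $\P_{ijk}$), and then treating the cases hierarchically: Case~1 if $i>0$; Case~2 if $i=0,\,j>0$; Case~3 if $i=j=0$. Thus in Cases~2 and~3 one has $|\lambda_1(p)|^2\le 1/100$ throughout $\P$, and in Subcase~2.2 additionally $|\lambda_2(p)|^2\ll 1$, which is exactly what makes the relations \eqref{eq:combinatorial_relation_1}--\eqref{eq:combinatorial_relation_2} and the Case~3 analysis go through. Your pigeonhole over a single $(\ell_0,L)$ does not give this, so the shift contributions you call ``exactly $P^{-2j\nu}S(xP^{2j}/y)$'' are not justified.

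Both issues are repaired by adopting the paper's ordering: localize all three $|\lambda_\ell(p)|^2$ first, then choose $K_\ell$ (and the set $\M(\vec K)$) case-by-case in terms of the resulting $L$ and $|\P|$.
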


Let us see how to conclude once we have \cref{prop:main_technical_inequality}.

\begin{proof}[Proof of \cref{thm:main}.]
    We will work with a single value of $x \geq 1$ in the proof. If $S(x) = 0$ the result is trivial, so we assume otherwise.

    The theorem then follows from applying \cref{lemma:recursion_asymptotics} to the compactly supported function $f(y) := \frac{S(x/y)}{S(x)}$, with parameters $\Delta = \frac{1}{8}$, $A$ a sufficiently large absolute constant, $\eps>0$ a sufficiently small absolute constant, $M=3$, and $N=4$. Indeed, \cref{prop:main_technical_inequality} shows that $f$ satisfies the required conditions for \cref{lemma:recursion_asymptotics} with (say) $\eps = \frac{1}{100}$ and $A = \max\left(D, 10^{10^{10}}\right)$.
    
\end{proof}

\begin{proof}[Proof of \cref{prop:main_technical_inequality}.]
    If $x < y$ then $S(x/y)=0$ and there is nothing to prove, so we assume $x\geq y\geq 10^{10^{10}}$. 
    Let $\cQ$ denote the set of primes contained in the interval $\left[\frac{P}{2}, P\right]$.

    If there exists $p\in\cQ$ such that $\abs{\lambda_\ell(p)}^2\ge y$ for some $\ell\in\{1,2,3\}$, then 
    the Hecke relation \eqref{eq:Hecke_relation} implies that either $\abs{\lambda_1(p)}^2$ or $\abs{\lambda_2(p)}^2$ is greater than $y^{1/3}$, so \cref{lemma:large_eigenvalues} gives
    \begin{equation*}
        S\left(\frac{x}{y}\right) \ll y^{-1/3}\cdot S\left(\frac{xP^2}{y}\right),
    \end{equation*}
    and the desired inequality follows (with plenty of room). Thus from now on we assume that  
    \begin{equation}\label{eq:range-of-eigenvalues}
        \abs{\lambda_\ell(p)}^2<y
    \end{equation}
    for all $p\in\cQ$ and $\ell \in\{1,2,3\}$. For $i\geq 1$ and $\ell \in \{1, 2, 3\}$, set
    \begin{equation*}
        \cQ^\ell_i := \left\{p\in \cQ : \frac{2^{i-1}}{100}<|\lambda_\ell(p)|^2 \leq \frac{2^i}{100} \right\}, \qquad \qquad     \cQ^\ell_0 := \left\{p\in \cQ : |\lambda_\ell(p)|^2 \leq \frac{1}{100} \right\},
    \end{equation*}
    and
    $$
    \P_{ijk}:= \cQ^1_i\cap \cQ^2_j \cap \cQ^3_k.
    $$
    By \eqref{eq:range-of-eigenvalues} we have
    \begin{equation*}
    \cQ = \bigsqcup_{\substack{0\leq i, j, k \leq J }} \P_{ijk}
    \end{equation*}
    for (say) $J := 2 \log y$. 
    From $y\geq 10^{10^{10}}$ it follows that
    \begin{equation*}
        |\cQ| \geq \frac{P}{2 \log{P}},
    \end{equation*}
    so there exists a tuple $(i,j,k)$ such that 
    \begin{equation*}
        |\P_{ijk}| \geq \frac{|\cQ|}{(J+1)^3} \geq \frac{P}{2\log{P}\cdot (3\log{y})^3} \geq\frac{P}{10^2\cdot(\log{P})^4}.
    \end{equation*}
    We fix such a tuple and observe that $(i,j,k)\ne(0,0,0)$, due to the Hecke relation \eqref{eq:Hecke_relation}.
    Denote $\P := \P_{ijk}$. 
    The argument splits into three cases.


\subsection*{Case 1} $i > 0$.

Let $L := \frac{2^i}{100}$, so that $L\geq \frac{1}{50}$ and
\begin{equation*}
    \frac{L}{2} < |\lambda_1(p)|^2 \leq L
\end{equation*}
for every $p\in\P$. Let $K_1$ be any integer satisfying  
\begin{equation}\label{eq:K_basic_ineqs}
    1 \leq K_1 \leq \frac{|\P|}{2},
\end{equation} 
and denote 
\begin{equation*}
    S^\sharp(z) := \sum_{\substack{\beta \in \M_1(K_1) \\ N(\beta) \leq z }}|A(\beta)|^2 \quad \quad \text{ and } \quad \quad S^\flat(z) := \sum_{\substack{\beta \not\in \M_1(K_1) \\ N(\beta) \leq z }}|A(\beta)|^2.
\end{equation*}
(Note that $S^\flat(z)$ is $S_1^\flat\left(z, K_1\right)$ from \eqref{eq:before-lemma7}, but we set this more general notation for the cases ahead.) 
Then $S=S^\sharp+S^\flat$ and we bound $S(x/y)$ by bounding $S^\sharp(x/y)$ and $S^\flat(x/y)$ separately, always assuming \eqref{eq:K_basic_ineqs}.  

To bound $S^\sharp(x/y)$, we will amplify using $\lambda_1(p)$. Consider the expression
\begin{align*}
    \mathcal{A} := \sum_{\substack{\beta \in \M_1(K_1) \\ N(\beta) \leq x/y }}|A(\beta)|^2 \cdot \left( \sum_{\substack{p\in \P \\ p\nmid \beta}} |\lambda_1(p)|^2\right).
\end{align*}
Since $|\lambda_1(p)|^2 \geq \frac{L}{2}$ for every $p\in\P$ and $K_1 \leq \frac{|\P|}{2}$,
\begin{equation}\label{eq:larger-than}
    \mathcal{A} \geq \frac{L(|\P|-K_1)}{2} S^\sharp\left(\frac{x}{y}\right) \gg L|\P| \cdot S^\sharp\left(\frac{x}{y}\right).
\end{equation}
On the other hand, by the Hecke relation for $\lambda_1(p) A(\beta)$,
\begin{align*}
    \mathcal{A} &= \sum_{\substack{\beta \in \M_1(K_1) \\ N(\beta) \leq x/y }}\sum_{\substack{p\in \P \\ p\nmid \beta}} \left| A(p\beta) + \underbrace{A(\beta/p)}_{=0} + \frac{1}{\sqrt{p}}\sum_{i=1}^{p+1} A\left(\frac{\alpha_i' \beta \overline{\alpha_i}}{p}\right)  \right|^2 \\
    & \ll \sum_{\substack{\beta \in \M_1(K_1) \\ N(\beta) \leq x/y }}\sum_{\substack{p\in \P \\ p\nmid \beta}} | A(p\beta)|^2 +  \sum_{\substack{\beta \in \M_1(K_1) \\ N(\beta) \leq x/y }}\sum_{\substack{p\in \P \\ p\nmid \beta}} \frac{1}{p} \left|  \sum_{i=1}^{p+1} A\left(\frac{\alpha_i' \beta \overline{\alpha_i}}{p}\right)\right|^2.
\end{align*}
The first double sum is $\leq(K_1+1) \cdot S\left(\frac{xP^2}{y}\right)$. Indeed, if $\beta \in \M_1(K_1)$ then $\delta = p\beta \in \M_1(K_1+1)$, therefore for each such $\delta$ there are at most $K_1+1$ choices for $p \in \P$. 
The second double sum is $\ll K_1 \cdot S(x/y)$, due to \cref{lemma:sum_of_conjs_mult_p_bound}. Combining this with \eqref{eq:larger-than} we conclude that
\begin{align}\label{eq:sharp_sum_bound}
    S^\sharp\left(\frac{x}{y}\right) \ll \frac{K_1}{L|\P|}\cdot S\left(\frac{xP^2}{y}\right).
\end{align}

To bound $S^\flat(x/y)$ we first apply \cref{lemma:flat_sum_bounds} and get
\begin{equation}\label{eq:flat_sum_bound}
    S^\flat\left(\frac{x}{y}\right) = S_1^\flat\left(\frac{x}{y}, K_1\right) \leq \left(\frac{B L |\P|}{K_1+1}\right)^{K_1+1} S\left(\frac{x}{y(P/2)^{2(K_1+1)}}\right)
\end{equation}
for some absolute constant $B\geq 1$. If $L \geq \frac{P^{2\nu}}{10^{10}B}$ then \cref{lemma:large_eigenvalues} trivially implies the desired result, so we assume otherwise. Then we can take $K_1 := \left\lceil e \cdot \frac{BL|\P|}{(P/2)^{2\nu}} \right\rceil - 1 \ge\log P$ and \eqref{eq:K_basic_ineqs} is satisfied. Applying \eqref{eq:sharp_sum_bound} and \eqref{eq:flat_sum_bound} we conclude that
\begin{align*}
    S\left(\frac{x}{y}\right) & \ll {P^{-2\nu}} \cdot S\left(\frac{xP^2}{y}\right) + \left(\frac{B L |\P|}{K_1+1}\right)^{K_1+1} S\left(\frac{x}{y(P/2)^{2(K_1+1)}}\right) \\
    & \leq {P^{-2\nu}} \cdot S\left(\frac{xP^2}{y}\right) + e^{-(K_1+1)}\left(\frac{P}{2}\right)^{2\nu(K_1+1)} S\left(\frac{x}{y(P/2)^{2(K_1+1)}}\right). 
\end{align*}

This completes the proof of Case 1. 


\subsection*{Case 2} $i=0$ and $j > 0$.

In this case we have
\begin{equation*}
    |\lambda_1(p)|^2 \leq \frac{1}{100} \qquad \text{ and } \qquad \frac{L}{2} < |\lambda_2(p)|^2 \leq L
\end{equation*}
for every $p\in\P$, where $L:= \frac{2^{j}}{100} \geq \frac{1}{50}$. Our argument depends on the size of $L$. 

\subsection*{Subcase 2.1} $L > 10^{10}$.

Consider integers 
\begin{equation}\label{eq:K_12_P_bound}
    1 \leq K_1 \leq \frac{|\P|}{2} \qquad \text{ and } \qquad K_2 \geq 1
\end{equation}
to be chosen later. Let $\M(\Vec{K}) := \M_1(K_1) \cap \M_2(K_2)$ and denote
\begin{equation*}
    S^\sharp(z) := \sum_{\substack{\beta \in \M(\Vec{K}) \\ N(\beta) \leq z }}|A(\beta)|^2 \quad \quad \text{ and } \quad \quad S^\flat(z) := \sum_{\substack{\beta \not\in \M(\Vec{K}) \\ N(\beta) \leq z }}|A(\beta)|^2.
\end{equation*}

To bound $S^\sharp(x/y)$, we will amplify using $\lambda_2(p)$. Consider the amplified expression
\begin{align*}
    \mathcal{A} := \sum_{\substack{\beta \in \M(\Vec{K}) \\ N(\beta) \leq x/y }}|A(\beta)|^2 \cdot \left( \sum_{\substack{p\in \P \\ p\nmid \beta}} |\lambda_2(p)|^2\right).
\end{align*}
Since $|\lambda_2(p)|^2 \geq \frac{L}{2}$ for every $p\in\P$ and $K_1 \leq \frac{|\P|}{2}$,
\begin{equation*}
    \mathcal{A} \geq \frac{L(|\P|-K_1)}{2} S^\sharp\left(\frac{x}{y}\right) \geq \frac{L|\P|}{4} \cdot S^\sharp\left(\frac{x}{y}\right).
\end{equation*}

Using the Hecke relations for $\lambda_2(p)A(\beta)$ and $\lambda_1(p)A(p\beta)$, 
\begin{align*}
    \lambda_2(p)A(\beta) &= \frac{1}{\sqrt{p}}\sum_{i=1}^{p+1} \left[A\left(\alpha_i' \beta \overline{\alpha_i}\right) + A\left(\frac{\alpha_i' \beta \overline{\alpha_i}}{p^2}\right) \right] + \mathcal{E}(\beta, p) A(\beta) \\
    &= \lambda_1(p)A(p\beta) - A(p^2\beta) + \frac{1}{\sqrt{p}}\sum_{i=1}^{p+1} A\left(\frac{\alpha_i' \beta \overline{\alpha_i}}{p^2}\right) + \left(\mathcal{E}(\beta, p) - 1\right)A(\beta).
\end{align*}
Therefore, since $|\mathcal{E}(\beta, p)| \leq 1$ and $|\lambda_1(p)|^2 \leq \frac{1}{100}$,
\begin{align*}
    \mathcal{A} &= \sum_{\substack{\beta \in \M(\Vec{K}) \\ N(\beta) \leq x/y }}\sum_{\substack{p\in \P \\ p\nmid \beta}} \left| \lambda_1(p)A(p\beta) - A(p^2\beta) + \frac{1}{\sqrt{p}}\sum_{i=1}^{p+1} A\left(\frac{\alpha_i' \beta \overline{\alpha_i}}{p^2}\right) + \left(\mathcal{E}(\beta, p) - 1\right)A(\beta) \right|^2 \\
    & \leq 4 \sum_{\substack{\beta \in \M(\Vec{K}) \\ N(\beta) \leq x/y }}\sum_{\substack{p\in \P \\ p\nmid \beta}} \left[ 4 \left|A(\beta)\right|^2 + \frac{\left|A(p\beta)\right|^2}{100} + \left|A(p^2\beta)\right|^2 + \left|\frac{1}{\sqrt{p}}  \sum_{i=1}^{p+1} A\left(\frac{\alpha_i' \beta \overline{\alpha_i}}{p^2}\right)\right|^2 \right] \\
    &\leq 16 |\P| \cdot  S^\sharp\left(\frac{x}{y}\right) + \frac{K_1+1}{25} \cdot S\left(\frac{xP^2}{y}\right) + 4(K_2+1) \cdot S\left(\frac{xP^4}{y}\right) \\
    & \qquad \qquad \qquad \qquad \qquad \qquad \qquad \qquad \qquad + 4\sum_{\substack{\beta \in \M(\Vec{K}) \\ N(\beta) \leq x/y }}\sum_{\substack{p\in \P \\ p\nmid \beta}}  \left|\frac{1}{\sqrt{p}}  \sum_{i=1}^{p+1} A\left(\frac{\alpha_i' \beta \overline{\alpha_i}}{p^2}\right)\right|^2 .
\end{align*}
In the last step we used the fact that $p\beta$ is a multiple of at most $K_1+1$ distinct primes in $\P$, and $p^2\beta$ is a multiple of at most $K_2+1$ squares of distinct primes in $\P$, since $\beta \in \M(\Vec{K})$.

Applying \cref{lemma:sum_of_conjs_mult_p_bound} to the last term, since $\M(\Vec{K}) \subset \M_1(K_1)$ we see that it is
\begin{equation*}
    \ll |\P| \cdot S\left(\frac{x}{y(P/2)^2}\right).
\end{equation*}
Since $L>10^{10}$, we have $\frac{L}{4}-16 \geq \frac{L}{5}$, so we obtain
\begin{equation}\label{eq:case_2_sharp_sum_bound}
    S^\sharp\left(\frac{x}{y}\right) \ll  \frac{K_1}{L|\P|} \cdot S\left(\frac{xP^2}{y}\right) + \frac{K_2}{L|\P|} \cdot S\left(\frac{xP^4}{y}\right)  + \frac{1}{L} \cdot S\left(\frac{x}{y(P/2)^2}\right).
\end{equation}

By \cref{lemma:flat_sum_bounds}, we also have
\begin{equation}\label{eq:case_2_flat_sum_bound}
    S^\flat\left(\frac{x}{y}\right) \leq S_1^\flat\left(\frac{x}{y}, K_1\right) + S_2^\flat\left(\frac{x}{y}, K_2\right) \leq \sum_{\ell=1}^2 \left( \frac{B|\P|L}{K_\ell+1} \right)^{K_\ell+1} S\left(\frac{x}{y(P/2)^{2\ell(K_\ell+1)}}\right) 
\end{equation}
for some absolute constant $B\geq 1$.

Combining \eqref{eq:case_2_sharp_sum_bound} and \eqref{eq:case_2_flat_sum_bound}, we conclude that
\begin{align*}
    S\left(\frac{x}{y}\right) \ll \frac{1}{L} \cdot S\left(\frac{x}{y(P/2)^2}\right) + \sum_{\ell=1}^2 \frac{K_\ell}{L|\P|} \cdot S\left(\frac{xP^{2\ell}}{y}\right) + \left( \frac{B|\P|L}{K_\ell+1} \right)^{K_\ell+1} S\left(\frac{x}{y(P/2)^{2\ell(K_\ell+1)}}\right)
\end{align*}

If $L \geq \frac{P^{2\nu}}{10^{10} B}$ then the desired result follows from \cref{lemma:large_eigenvalues}. Otherwise, note that we can choose 
\begin{equation*}
    K_\ell := \left\lceil e \cdot \frac{B|\P|L}{(P/2)^{2\ell\nu}} \right\rceil - 1 \geq \frac{P^{1-2\ell\nu}}{10^{5}(\log{P})^4} \geq \log{P}
\end{equation*}
for $\ell \in \{1, 2\}$, and \eqref{eq:K_12_P_bound} is satisfied. Therefore, we obtain
\begin{align*}
    S\left(\frac{x}{y}\right) \ll S\left(\frac{x}{y(P/2)^2}\right) + \sum_{\ell=1}^2 {P^{-2\ell\nu}} \cdot S\left(\frac{xP^{2\ell}}{y}\right) + e^{-(K_\ell+1)} \left(\frac{P}{2}\right)^{2\ell \nu (K_\ell+1)} \cdot S\left(\frac{x}{y(P/2)^{2\ell(K_\ell+1)}}\right).
\end{align*}

This finishes the proof of Subcase 2.1.

\subsection*{Subcase 2.2} $L \leq 10^{10}$.

Similarly to the previous subcase, we consider integers 
\begin{equation}\label{eq:K_1234_P_bound}
    1 \leq K_1 \leq \frac{|\P|}{2}, \qquad  K_2 \geq 1, \qquad  K_3 \geq 1, \qquad \text{ and } \qquad K_4 \geq 1
\end{equation}
to be chosen later. Let $\M(\Vec{K}) := \bigcap_{\ell=1}^4 \M_\ell(K_\ell)$ and denote
\begin{equation*}
    S^\sharp(z) := \sum_{\substack{\beta \in \M(\Vec{K}) \\ N(\beta) \leq z }}|A(\beta)|^2 \quad \quad \text{ and } \quad \quad S^\flat(z) := \sum_{\substack{\beta \not\in \M(\Vec{K}) \\ N(\beta) \leq z }}|A(\beta)|^2.
\end{equation*}

To bound $S^\sharp(x/y)$, we will again amplify using $\lambda_2(p)$. Consider the amplified expression
\begin{align*}
    \mathcal{A} := \sum_{\substack{\beta \in \M(\Vec{K}) \\ N(\beta) \leq x/y }}|A(\beta)|^2 \cdot \left( \sum_{\substack{p\in \P \\ p\nmid \beta}} |\lambda_2(p)|^2\right).
\end{align*}
Since $|\lambda_2(p)|^2 > \frac{L}{2} \geq \frac{1}{100}$ for every $p\in\P$ and $K_1 \leq \frac{|\P|}{2}$,
\begin{equation*}
    \mathcal{A} \geq \frac{L(|\P|-K_1)}{2} S^\sharp\left(\frac{x}{y}\right) \gg |\P| \cdot S^\sharp\left(\frac{x}{y}\right).
\end{equation*}

Using the Hecke relations for $\lambda_2(p)A(\beta)$, followed by $\lambda_2(p)A(p^2\beta)$ and $\lambda_1(p) A(p^3\beta)$, 
\begin{align*}
    \lambda_2(p)A(\beta) &= \frac{1}{\sqrt{p}}\sum_{i=1}^{p+1} \left[A\left(\alpha_i' \beta \overline{\alpha_i}\right) + A\left(\frac{\alpha_i' \beta \overline{\alpha_i}}{p^2}\right) \right] + \mathcal{E}(\beta, p) A(\beta) \\
    &= \lambda_2(p)A(p^2\beta) + \frac{1}{\sqrt{p}}\sum_{i=1}^{p+1} \left[ A\left(\frac{\alpha_i' \beta \overline{\alpha_i}}{p^2}\right) - A\left(p^2 \alpha_i' \beta \overline{\alpha_i}\right) \right] \\
    &\qquad \qquad \qquad + \mathcal{E}(\beta, p)A(\beta) - \mathcal{E}(p^2\beta, p) A(p^2 \beta)\\
    & = \lambda_2(p)A(p^2\beta) + \frac{1}{\sqrt{p}}\sum_{i=1}^{p+1}  A\left(\frac{\alpha_i' \beta \overline{\alpha_i}}{p^2}\right)  -\lambda_1(p)A(p^3\beta) + A\left(p^4 \beta\right) \\
    & \qquad \qquad \qquad  - A(p^2\beta) + \mathcal{E}(\beta, p)A(\beta) - \mathcal{E}(p^2\beta, p) A(p^2 \beta).
\end{align*}
We have $|\mathcal{E}(p^2\beta, p)| \leq 1$, and since $p\nmid \beta$ also $|\mathcal{E}(\beta, p)| \leq \frac{p+1}{p^2} \ll \frac{1}{p}$. Then using $|\lambda_1(p)|^2 \leq \frac{1}{100}$ and $|\lambda_2(p)|^2 \leq 10^{10} \ll 1$, we apply Cauchy--Schwarz to obtain
\begin{align*}
    \mathcal{A} &\ll \sum_{\substack{\beta \in \M(\Vec{K}) \\ N(\beta) \leq x/y }}\sum_{\substack{p\in \P \\ p\nmid \beta}} \left[ \left|A(p^4\beta)\right|^2 + \left|A(p^3\beta)\right|^2 + \left|A(p^2\beta)\right|^2 + \frac{\left|A(\beta)\right|^2}{p^2} + \left|\frac{1}{\sqrt{p}}  \sum_{i=1}^{p+1} A\left(\frac{\alpha_i' \beta \overline{\alpha_i}}{p^2}\right)\right|^2 \right] \\
    &\leq \sum_{\ell=2}^4 (K_\ell + 1) \cdot  S\left(\frac{xP^{2\ell}}{y}\right) + \frac{|\P|}{(P/2)^2} \cdot  S\left(\frac{x}{y}\right) + \sum_{\substack{\beta \in \M(\Vec{K}) \\ N(\beta) \leq x/y }}\sum_{\substack{p\in \P \\ p\nmid \beta}}  \left|\frac{1}{\sqrt{p}}  \sum_{i=1}^{p+1} A\left(\frac{\alpha_i' \beta \overline{\alpha_i}}{p^2}\right)\right|^2 .
\end{align*}
In the last step we used the fact that for $\ell \in \{2, 3, 4\}$, $p^\ell\beta$ is a multiple of at most $K_\ell+1$ distinct $\ell$-th powers of primes in $\P$, since $\beta \in \M(\Vec{K}) \subset \M_\ell(K_\ell)$. 

Applying \cref{lemma:sum_of_conjs_mult_p_bound} to the last term, since $\M(\Vec{K}) \subset \M_1(K_1)$ we see that it is
\begin{equation*}
    \ll |\P| \cdot S\left(\frac{x}{y(P/2)^2}\right).
\end{equation*}
Therefore
\begin{equation}\label{eq:subcase_2_2_sharp_sum_bound}
    S^\sharp\left(\frac{x}{y}\right) \ll \sum_{\ell=2}^4 \frac{K_\ell}{|\P|} \cdot  S\left(\frac{xP^{2\ell}}{y}\right) + \frac{1}{P^2} \cdot  S\left(\frac{x}{y}\right) + S\left(\frac{x}{y(P/2)^2}\right).
\end{equation}

By \cref{lemma:flat_sum_bounds}, we also have
\begin{equation}\label{eq:subcase_2_2_flat_sum_bound}
    S^\flat\left(\frac{x}{y}\right) \leq \sum_{\ell = 1}^4 S_\ell^\flat\left(\frac{x}{y}, K_\ell\right) \leq \sum_{\ell=1}^4 \left( \frac{B|\P|}{K_\ell+1} \right)^{K_\ell+1} S\left(\frac{x}{y(P/2)^{2\ell(K_\ell+1)}}\right) 
\end{equation}
for some absolute constant $B\geq 1$.

Combining \eqref{eq:subcase_2_2_sharp_sum_bound} and \eqref{eq:subcase_2_2_flat_sum_bound}, we conclude that
\begin{align*}
    S\left(\frac{x}{y}\right) &\ll \frac{1}{P^2} \cdot  S\left(\frac{x}{y}\right) + S\left(\frac{x}{y(P/2)^2}\right)  \\
    &\qquad \qquad + \sum_{\ell=1}^4 \frac{K_\ell}{|\P|} \cdot S\left(\frac{xP^{2\ell}}{y}\right) + \left( \frac{B|\P|}{K_\ell+1} \right)^{K_\ell+1} S\left(\frac{x}{y(P/2)^{2\ell(K_\ell+1)}}\right)
\end{align*}

If $P^{2\nu} \leq 10^{10} B$, then trivially
\begin{equation*}
    S\left(\frac{x}{y}\right) \leq S\left(\frac{xP^2}{y}\right) \ll  P^{-2\nu} \cdot S\left(\frac{xP^2}{y}\right),
\end{equation*}
and the result follows. Otherwise, note that we can choose 
\begin{equation*}
    K_\ell := \left\lceil e \cdot \frac{B|\P|}{(P/2)^{2\ell\nu}} \right\rceil - 1 \geq \frac{P^{1-2\ell\nu}}{10^{5}(\log{P})^4} \geq \log{P}
\end{equation*}
for $\ell \in \{1, 2, 3\}$, and \eqref{eq:K_1234_P_bound} is satisfied. We also choose
\begin{equation*}
    K_4 := \max\left(\lceil\log{P}\rceil, \left\lceil e \cdot \frac{B|\P|}{(P/2)} \right\rceil - 1 \right) 
\end{equation*}
so clearly $\log{P} \gg K_4 \geq \log{P}$. Since $P^8 = y$, we can bound
\begin{equation*}
    \frac{K_4}{|\P|} \cdot S\left(\frac{xP^{8}}{y}\right) \ll \frac{(\log{P})^5}{P} \cdot S\left(x\right).
\end{equation*}

Therefore,
\begin{align*}
    S\left(\frac{x}{y}\right) &\ll \frac{(\log{P})^5}{P} \cdot S\left(x\right) + S\left(\frac{x}{y(P/2)^2}\right) + \sum_{\ell=1}^3 {P^{-2\ell\nu}} \cdot S\left(\frac{xP^{2\ell}}{y}\right) \\
    &\qquad \qquad + \sum_{\ell=1}^4 e^{-(K_\ell+1)} \left(\frac{P}{2}\right)^{2\ell \nu (K_\ell+1)} \cdot S\left(\frac{x}{y(P/2)^{2\ell(K_\ell+1)}}\right).
\end{align*}

This finishes the proof of Subcase 2.2.


\subsection*{Case 3} $i=j=0$ and $k > 0$.

In this case we have
\begin{equation*}
    \max\left(|\lambda_1(p)|^2, |\lambda_2(p)|^2\right) \leq \frac{1}{100},
\end{equation*}
so by the Hecke relation \eqref{eq:Hecke_relation} we obtain
\begin{equation*}
    1 \leq |\lambda_3(p)|^2 \leq 4
\end{equation*}
for every $p\in\P$. Consider integers 
\begin{equation}\label{eq:K_P_bound_case_3}
    1 \leq K_1 \leq \frac{|\P|}{2} \qquad \text{ and } \qquad K_2 \geq 1
\end{equation}
to be chosen later. Let $\M(\Vec{K}) := \M_1(K_1) \cap \M_2(K_2)$ and denote
\begin{equation*}
    S^\sharp(z) := \sum_{\substack{\beta \in \M(\Vec{K}) \\ N(\beta) \leq z }}|A(\beta)|^2 \quad \quad \text{ and } \quad \quad S^\flat(z) := \sum_{\substack{\beta \not\in \M(\Vec{K}) \\ N(\beta) \leq z }}|A(\beta)|^2.
\end{equation*}

To bound $S^\sharp(x/y)$, we will amplify using $\lambda_3(p)$. Consider the amplified expression
\begin{align*}
    \mathcal{A} := \sum_{\substack{\beta \in \M(\Vec{K}) \\ N(\beta) \leq x/y }}|A(\beta)|^2 \cdot \left( \sum_{\substack{p\in \P \\ p\nmid \beta}} |\lambda_3(p)|^2\right).
\end{align*}
Since $|\lambda_3(p)|^2 \geq 1$ for every $p\in\P$ and $K_1 \leq \frac{|\P|}{2}$,
\begin{equation*}
    \mathcal{A} \geq (|\P|-K_1) \cdot S^\sharp\left(\frac{x}{y}\right) \gg |\P| \cdot S^\sharp\left(\frac{x}{y}\right).
\end{equation*}

For $p\nmid \beta$, observe that $|\mathcal{E}(\beta, p)| \leq \frac{p+1}{p^2}$ and we have the Hecke relation 
\begin{align*}
    \lambda_3(p) A(\beta) &= A(p^2\beta) - A(\beta) \cdot \left(\frac{p+1}{p}\mathcal{E}(\beta, p) + \frac{p^2+p+1}{p^3}\right) \\
    & \quad + \frac{1}{\sqrt{p}} \sum_{i=1}^{p+1} \left[ A\left(\alpha'_i \beta \overline{\alpha_i}\right) \cdot \left( \1_p\left(\alpha'_i \beta \overline{\alpha_i}\right) - \frac{1}{p}\right) - \frac{1}{p}\cdot A\left(\frac{\alpha'_i \beta \overline{\alpha_i}}{p^2}\right) \right] \\
    & \quad + \frac{1}{p} \sum_{j =1}^{p+1}\sum_{i =1}^{p+1} A\left(\frac{\alpha'_j\alpha'_i \beta \overline{\alpha_i} \: \overline{\alpha_j}}{p^2}\right) \cdot \1_p(\alpha'_i \beta \overline{\alpha_i})
\end{align*}

We can apply Cauchy--Schwarz to obtain
\begin{align*}
    \mathcal{A} &\ll \sum_{\substack{\beta \in \M(\Vec{K}) \\ N(\beta) \leq x/y }}\sum_{\substack{p\in \P \\ p\nmid \beta}} \left[ \left|A(p^2\beta)\right|^2 + \frac{\left|A(\beta)\right|^2}{p^2} + \frac{1}{p}\left|\sum_{i=1}^{p+1} A\left(\alpha'_i \beta \overline{\alpha_i}\right) \cdot \left( \1_p\left(\alpha'_i \beta \overline{\alpha_i}\right) - \frac{1}{p}\right)\right|^2\right. \\
    &  \qquad \qquad \qquad \qquad \left. + \frac{1}{p^2} \left|\frac{1}{\sqrt{p}}  \sum_{i=1}^{p+1} A\left(\frac{\alpha_i' \beta \overline{\alpha_i}}{p^2}\right)\right|^2 + \frac{1}{p^2} \left|\sum_{i =1}^{p+1} \1_p(\alpha'_i \beta \overline{\alpha_i}) \cdot \sum_{j =1}^{p+1} A\left(\frac{\alpha'_j\alpha'_i \beta \overline{\alpha_i} \: \overline{\alpha_j}}{p^2}\right) \right|^2 \right]\\
    &\leq (K_2 + 1) \cdot  S\left(\frac{xP^{4}}{y}\right) + \frac{|\P|}{(P/2)^2} \cdot  S\left(\frac{x}{y}\right) + \sum_{\substack{\beta \in \M(\Vec{K}) \\ N(\beta) \leq x/y }}\sum_{\substack{p\in \P \\ p\nmid \beta}} \left[ \frac{1}{p^2} \left|\frac{1}{\sqrt{p}}  \sum_{i=1}^{p+1} A\left(\frac{\alpha_i' \beta \overline{\alpha_i}}{p^2}\right)\right|^2 \right. \\
    &  \qquad \left. + \frac{1}{p}\left|\sum_{i=1}^{p+1} A\left(\alpha'_i \beta \overline{\alpha_i}\right) \cdot \left( \1_p\left(\alpha'_i \beta \overline{\alpha_i}\right) - \frac{1}{p}\right)\right|^2 + \frac{1}{p^2} \left|\sum_{i =1}^{p+1} \1_p(\alpha'_i \beta \overline{\alpha_i}) \cdot \sum_{j =1}^{p+1} A\left(\frac{\alpha'_j\alpha'_i \beta \overline{\alpha_i} \: \overline{\alpha_j}}{p^2}\right) \right|^2 \right].\\
\end{align*}
In the last step we used the fact that $p^2\beta$ is a multiple of at most $K_2+1$ distinct squares of primes in $\P$, since $\beta \in \M(\Vec{K}) \subset \M_2(K_2)$.

By \cref{lemma:sum_of_conjs_mult_p_bound}, since $\M(\Vec{K}) \subset \M_1(K_1)$ we see that
\begin{equation*}
    \sum_{\substack{\beta \in \M(\Vec{K}) \\ N(\beta) \leq x/y }}\sum_{\substack{p\in \P \\ p\nmid \beta}} \frac{1}{p^2} \left|\frac{1}{\sqrt{p}}  \sum_{i=1}^{p+1} A\left(\frac{\alpha_i' \beta \overline{\alpha_i}}{p^2}\right)\right|^2  \ll \frac{|\P|}{P^2} \cdot S\left(\frac{x}{y(P/2)^2}\right) \leq \frac{|\P|}{P^2} \cdot S\left(\frac{x}{y}\right).
\end{equation*}

Also notice that for each $\beta \in V^3(\Z)$ with $p\nmid\beta$, there are at most two distinct $i\in \{1, 2, \dots, p+1\}$ such that $p \mid \alpha'_i \beta \overline{\alpha_i}$, by \cref{lemma:v_p_of_conjugate}. Thus by Cauchy--Schwarz
\begin{align*}
    \sum_{\substack{\beta \in \M(\Vec{K}) \\ N(\beta) \leq x/y }}\sum_{\substack{p\in \P \\ p\nmid \beta}} \frac{1}{p}\left|\sum_{i=1}^{p+1} A\left(\alpha'_i \beta \overline{\alpha_i}\right) \cdot \left( \1_p\left(\alpha'_i \beta \overline{\alpha_i}\right) - \frac{1}{p}\right)\right|^2 & \ll \frac{1}{P}\sum_{\substack{\beta \in \M(\Vec{K}) \\ N(\beta) \leq x/y }}\sum_{\substack{p\in \P \\ p\nmid \beta}} \sum_{i=1}^{p+1} \left|A\left(\alpha'_i \beta \overline{\alpha_i}\right) \cdot \1_p\left(\alpha'_i \beta \overline{\alpha_i}\right) \right|^2 \\
    & \quad+ \frac{1}{P^2}\sum_{\substack{\beta \in \M(\Vec{K}) \\ N(\beta) \leq x/y }}\sum_{\substack{p\in \P \\ p\nmid \beta}} \sum_{i=1}^{p+1} \left| A\left(\alpha'_i \beta \overline{\alpha_i}\right) \right|^2.
\end{align*}
For each $\delta \in V^3(\Z)$ and $p\in \P$ there are at most $p+1 \leq P+1$ pairs $(\beta, i)$ with $\alpha'_i \beta \overline{\alpha_i} = \delta$, as $i$ uniquely determines $\beta$. Also, by \cref{lemma:v_p_of_conjugate} we observe that if $\beta\in \M_1(K_1)$ then $\delta \in \M_1(K_1+1)$. This implies
\begin{align*}
    \frac{1}{P^2}\sum_{\substack{\beta \in \M(\Vec{K}) \\ N(\beta) \leq x/y }}\sum_{\substack{p\in \P \\ p\nmid \beta}} \sum_{i=1}^{p+1} \left| A\left(\alpha'_i \beta \overline{\alpha_i}\right) \right|^2 \ll \frac{1}{P} \sum_{p\in \P} \sum_{\substack{\delta \in \M_1(K_1+1) \\ N(\delta) \leq xp^2/y }} |A(\delta)|^2 \ll \frac{|\P|}{P} \cdot S\left(\frac{xP^2}{y}\right)
\end{align*}
and
\begin{align*}
    \frac{1}{P}\sum_{\substack{\beta \in \M(\Vec{K}) \\ N(\beta) \leq x/y }}\sum_{\substack{p\in \P \\ p\nmid \beta}} \sum_{i=1}^{p+1} \left| A\left(\alpha'_i \beta \overline{\alpha_i}\right) \cdot \1_p\left(\alpha'_i \beta \overline{\alpha_i}\right) \right|^2 \ll \sum_{\substack{p\in \P}} \sum_{\substack{\delta \in \M_1(K_1+1) \\ N(\delta) \leq xp^2/y \\ p \mid \delta}} \left|A(\delta)\right|^2 \leq (K_1+1) \cdot  S\left(\frac{xP^2}{y}\right), 
\end{align*}
where we used the fact that  $\delta$ is a multiple of at most $K_1+1$ distinct primes in $\P$, since $\delta \in\M_1(K_1+1)$. In conclusion,
\begin{align*}
    \sum_{\substack{\beta \in \M(\Vec{K}) \\ N(\beta) \leq x/y }}\sum_{\substack{p\in \P \\ p\nmid \beta}} \frac{1}{p}\left|\sum_{i=1}^{p+1} A\left(\alpha'_i \beta \overline{\alpha_i}\right) \cdot \left( \1_p\left(\alpha'_i \beta \overline{\alpha_i}\right) - \frac{1}{p}\right)\right|^2 \ll  K_1 \cdot S\left(\frac{xP^2}{y}\right).
\end{align*}

Finally, we must properly bound the sum
\begin{equation*}
    \mathfrak{S} := \sum_{\substack{\beta \in \M(\Vec{K}) \\ N(\beta) \leq x/y }}\sum_{\substack{p\in \P \\ p\nmid \beta}} \frac{1}{p^2} \left|\sum_{i =1}^{p+1} \1_p(\alpha'_i \beta \overline{\alpha_i}) \cdot \sum_{j =1}^{p+1} A\left(\frac{\alpha'_j\alpha'_i \beta \overline{\alpha_i} \: \overline{\alpha_j}}{p^2}\right) \right|^2.
\end{equation*}
As in the previous arguments, \cref{lemma:v_p_of_conjugate} shows that 
\begin{equation*}
    \mathfrak{S} \leq 2 \sum_{\substack{p\in \P}} \frac{1}{p^2} \sum_{\substack{\beta \in \M(\Vec{K}) \\ N(\beta) \leq x/y \\ p\nmid \beta}} \sum_{i =1}^{p+1} \1_p(\alpha'_i \beta \overline{\alpha_i}) \cdot \left|\sum_{j =1}^{p+1} A\left(\frac{\alpha'_j \left(\frac{\alpha'_i \beta \overline{\alpha_i}}{p}\right) \overline{\alpha_j}}{p}\right) \right|^2,
\end{equation*}
and that $\delta := \frac{\alpha'_i \beta \overline{\alpha_i}}{p} \in V^3(\Z)$ satisfies $\delta \in \M_1(K_1+1) \cap \M_2(K_2) \subset \M_1(K_1+1) \cap \M_2(K_2+1) =: \M(\Vec{K}+1)$. Furthermore, $v_p(\delta) = 0$ or $1$. Fix $p\in \P$. If $v_p(\delta)=0$, then \cref{lemma:quaternion_multiplicity} gives
\begin{equation*}
    \#\left\{ (\beta, i) : 0 \not= \beta \in V^3(\Z), 1 \leq i \leq p+1 \text{ satisfy } \frac{\alpha'_i \beta \overline{\alpha_i}}{p} = \delta \right\} \leq 16,
\end{equation*}
while if $v_p(\delta) = 1$ the set above trivially has size $\leq p+1$, since $i$ uniquely determines $\beta$. Therefore, 
\begin{align*}
    \mathfrak{S} \ll \frac{1}{P}\sum_{\substack{\delta \in \M(\Vec{K}+1) \\ N(\delta) \leq x/y}} \sum_{\substack{p\in \P \\ p \nmid \delta}} \left|\frac{1}{\sqrt{p}}\sum_{j =1}^{p+1} A\left(\frac{\alpha'_j \delta \overline{\alpha_j}}{p}\right) \right|^2 + \sum_{\substack{\delta \in \M(\Vec{K}+1) \\ N(\delta) \leq x/y}} \sum_{\substack{p\in \P \\ v_p(\delta)=1}} \left|\frac{1}{\sqrt{p}}\sum_{j =1}^{p+1} A\left(\frac{\alpha'_j \delta \overline{\alpha_j}}{p}\right) \right|^2.
\end{align*}

By \cref{lemma:sum_of_conjs_mult_p_bound}, for the first term we have
\begin{equation*}
    \frac{1}{P}\sum_{\substack{\delta \in \M(\Vec{K}+1) \\ N(\delta) \leq x/y}} \sum_{\substack{p\in \P \\ p \nmid \delta}} \left|\frac{1}{\sqrt{p}}\sum_{j =1}^{p+1} A\left(\frac{\alpha'_j \delta \overline{\alpha_j}}{p}\right) \right|^2 \ll \frac{K_1}{P} \cdot S\left(\frac{x}{y}\right).
\end{equation*}

For the second term, we can write $\delta = p\gamma$ for $\gamma \in \M(\Vec{K}+1)$ with $p \nmid \gamma$. Then we can use the Hecke relation for $\lambda_1(p)A(p\gamma)$ and the constraint $|\lambda_1(p)|^2 \leq \frac{1}{100}$ to obtain
\begin{align*}
    \sum_{\substack{\delta \in \M(\Vec{K}+1) \\ N(\delta) \leq x/y}} \sum_{\substack{p\in \P \\ v_p(\delta)=1}} \left|\frac{1}{\sqrt{p}}\sum_{j =1}^{p+1} A\left(\frac{\alpha'_j \delta \overline{\alpha_j}}{p}\right) \right|^2 & \leq \sum_{\substack{p\in \P}} \sum_{\substack{\gamma \in \M(\Vec{K}+1) \\ N(\gamma) \leq \frac{x}{yp^2} \\ p\nmid \gamma}} \left|\frac{1}{\sqrt{p}}\sum_{j =1}^{p+1} A\left(\alpha'_j \gamma \overline{\alpha_j}\right) \right|^2 \\
    & \ll \sum_{\substack{p\in \P}} \sum_{\substack{\gamma \in \M(\Vec{K}+1) \\ N(\gamma) \leq \frac{x}{yp^2} \\ p\nmid \gamma}}  \left [ \left|A(p^2\gamma)\right|^2 + \left|\lambda_1(p)\right|^2\cdot \left|A(p\gamma)\right|^2 + \left|A(\gamma)\right|^2 \right] \\
    & \ll K_2 \cdot S\left(\frac{xP^2}{y}\right) + K_1 \cdot S\left(\frac{x}{y}\right) + |\P| \cdot S\left(\frac{x}{y(P/2)^2}\right).
\end{align*}
In the last step we used the fact that for $\ell \in \{1, 2\}$, $p^\ell\gamma$ is a multiple of at most $K_\ell+2$ distinct $\ell$-th powers of primes in $\P$, since $\gamma \in \M(\Vec{K}+1) \subset \M_\ell(K_\ell+1)$. 

Therefore, combining all of the bounds described above we conclude that
\begin{equation}\label{eq:case_3_sharp_sum_bound}
    S^\sharp\left(\frac{x}{y}\right) \ll \frac{K_2}{|\P|} \cdot  S\left(\frac{xP^4}{y}\right) + \frac{K_1}{|\P|} \cdot  S\left(\frac{xP^2}{y}\right) +  S\left(\frac{x}{y(P/2)^2}\right).
\end{equation}

To complement \eqref{eq:case_3_sharp_sum_bound}, we can apply \cref{lemma:flat_sum_bounds} to get
\begin{align}\label{eq:case_3_flat_sum_bound}
    S^\flat\left(\frac{x}{y}\right) \leq \sum_{\ell=1}^2 S_\ell^\flat\left(\frac{x}{y}, K_\ell\right) \leq \sum_{\ell=1}^2 \left(\frac{B |P|}{K_\ell+1}\right)^{K_\ell+1} S\left(\frac{x}{y(P/2)^{2\ell(K_\ell+1)}}\right)
\end{align}
for some absolute constant $B\geq 1$.

If $P^{2\nu} \leq 10^{10} B$, then trivially
\begin{equation*}
    S\left(\frac{x}{y}\right) \leq S\left(\frac{xP^2}{y}\right) \ll  P^{-2\nu} \cdot S\left(\frac{xP^2}{y}\right),
\end{equation*}
and the result follows. Otherwise, note that we can choose 
\begin{equation*}
    K_\ell := \left\lceil e \cdot \frac{B|\P|}{(P/2)^{2\ell\nu}} \right\rceil - 1 \geq \frac{P^{1-2\ell\nu}}{10^{5}(\log{P})^4} \geq \log{P}
\end{equation*}
for $\ell \in \{1, 2\}$, and \eqref{eq:K_P_bound_case_3} is satisfied. Thus summing \eqref{eq:case_3_sharp_sum_bound} and \eqref{eq:case_3_flat_sum_bound} gives
\begin{align*}
    S\left(\frac{x}{y}\right) \ll S\left(\frac{x}{y(P/2)^2}\right) + \sum_{\ell=1}^2 P^{-2\ell\nu} \cdot S\left(\frac{xP^{2\ell}}{y}\right) + e^{-(K_\ell+1)} \left(\frac{P}{2}\right)^{2\ell \nu (K_\ell+1)} \cdot S\left(\frac{x}{y(P/2)^{2\ell(K_\ell+1)}}\right).
\end{align*}

This finishes the proof of Case 3.

\end{proof}


\nocite{*}  
\bibliographystyle{abbrv}
\bibliography{references}

\end{document}